\newtheorem{theorem}{Theorem}[section]
\newtheorem{definition}[theorem]{Definition}
\newtheorem{lemma}[theorem]{Lemma}
\newtheorem{proposition}[theorem]{Proposition}
\newtheorem{corollary}[theorem]{Corollary}
\newtheorem{remark}[theorem]{Remark}
\newtheorem{example}[theorem]{Example}
\numberwithin{equation}{section}
\newcommand{\rr}{\mathbb{R}}
\newcommand{\eps}{\varepsilon}
\newcommand{\nn}{\mathbb{N}}
\newcommand{\cc}{\mathbb{C}}
\def\un{{\mathrm{1~\hspace{-1.4ex}l}}}
\def\diam{\operatorname{diam}}
\def\N{\mathbb N}
\def\R{\mathbb R}
\def\val#1{\vert#1\vert}
\def\l2{L^2(\R^{n})}
\def\L2{L^2(\R^{2n})}
\def\supp{\operatorname{supp}}
\def\eps{\varepsilon}
\def\mat22#1#2#3#4{\begin{pmatrix}#1&#2\\ #3&#4\end{pmatrix}}
\begin{document}
\title[Uncertainty principles and null-controllability]{Uncertainty principles in Gelfand-Shilov spaces and null-controllability}

\author{Jérémy \textsc{Martin}}

\address{\noindent \textsc{Jérémy Martin, Univ Rennes, CNRS, IRMAR - UMR 6625, F-35000 Rennes, France
}}
\email{jeremy.martin@univ-rennes1.fr}

\keywords{Uncertainty principles, observability, Gelfand-Shilov spaces} 
\makeatletter
	\@namedef{subjclassname@2020}{\textup{2020} Mathematics Subject Classification}
\makeatother
\subjclass[2020]{93B05, 93B07}

\begin{abstract}
We provide new uncertainty principles for functions in a general class of Gelfand-Shilov spaces. These results apply, in particular, with the classical Gelfand-Shilov spaces as well as for spaces of functions with weighted Hermite expansions. Thanks to these uncertainty principles, we derive null-controllability results for evolution equations with adjoint systems enjoying smoothing effects in specific Gelfand-Shilov spaces. More precisely, we consider control subsets which are thick with respect to a quasi linearly growing density and establish sufficient conditions on the growth of the density to ensure null-controllability of these evolution equations.

\end{abstract}

\maketitle
\section{Introduction}
This paper aims at broadening the understanding of the link between uncertainty principles and localized controllability of evolution equations. An uncertainty principle is a property which gives some limitations on the simultaneous concentration of a function and its Fourier transform. There exist different forms of uncertainty principles and one of them consists in studying the support of functions whose Fourier transforms are localized. The Logvinenko-Sereda Theorem \cite{logvinenko-sereda} ensures the equivalence of the norms $\| \cdot \|_{L^2(\rr^d)}$ and $\| \cdot \|_{L^2(\omega)}$, where $\omega \subset \rr^d$ is a measurable subset, on the subspace
$$\big\{f \in L^2(\rr^d); \ \supp \hat{f} \subset \overline{B(0,R)} \big\} \quad \text{with} \quad R>0,$$ where $\hat{f}$ denotes the Fourier transform of $f$,
as soon as $\omega$ is thick. The thickness property is defined as follows:
\begin{definition}\label{thick_def}
Let $d \in \nn^*$ and $\omega$ be a measurable subset of $\rr^d$. For $0<\gamma \leq 1$ and $L>0$, the set $\omega$ is said to be $\gamma$-thick at scale $L>0$ if and only if 
\begin{equation*}
\forall x \in \rr^d, \quad |\omega \cap (x+[0,L]^d)| \geq \gamma L^d,
\end{equation*}
where $|A|$ denotes the Lebesgue measure of the measurable set $A$. 
The set $\omega$ is said to be thick if and only if $\omega$ is $\gamma$-thick at scale $L>0$ for some $0<\gamma \leq 1$ and $L>0$.
\end{definition}
We define more generally the thickness with respect to a density:
\begin{definition}\label{thick_density}
Let $d \in \nn^*$, $0<\gamma\leq 1$, $\omega$ be a measurable subset of $\rr^d$ and $\rho : \rr^d \longrightarrow (0,+\infty)$ a positive function. The set $\omega$ is said to be $\gamma$-thick with respect to $\rho$ if and only if
\begin{equation*}
\forall x \in \rr^d, \quad |\omega \cap B(x,\rho(x))| \geq \gamma |B(x,\rho(x))|,
\end{equation*}
where  $B(x,L)$ denotes the Euclidean ball of $\rr^d$ centered at $x$ with radius $L$. 
\end{definition}
Of course, a measurable subset of $\rr^d$ is thick if and only if it is thick with respect to a positive constant density.
Kovrijkine provided a quantitative version of the Logvinenko-Sereda Theorem in \cite[Theorem~3]{Kovrijkine}:
\begin{theorem}[Kovrijkine {\cite[Theorem~3]{Kovrijkine}}]\label{Kovrijkine1} Let $\omega \subset \rr^d$ be a measurable subset $\gamma$-thick at scale $L>0$.
There exists a universal positive constant $C>0$ independent on the dimension $d \geq 1$ such that for all $f \in L^2(\rr^d)$ satisfying $\supp \hat{f} \subset J$, with $J$ a cube with sides of length $b$ parallel to coordinate axes, 
\begin{equation}\label{kovrijkine1.1}
\|f\|_{L^2(\rr^d)} \leq c(\gamma,d, L, b) \|f\|_{L^2(\omega)},
\end{equation} 
with $$c(\gamma, d, L, b)= \Big( \frac{C^d}{\gamma} \Big)^{Cd(Lb+1)}.$$
\end{theorem}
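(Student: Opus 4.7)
The plan is to follow Kovrijkine's good-cube/bad-cube strategy, which refines the original Logvinenko--Sereda argument by combining Bernstein-type inequalities for band-limited functions with a one-dimensional Remez (Turán--Nazarov) polynomial estimate.

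First I would tile $\rr^d$ into cubes $Q_k=x_k+[0,L]^d$ for $k\in\ZZ^d$. Since $\supp\widehat f\subset J$ with $J$ of side length $b$, Plancherel yields the global Bernstein bound $\|\partial^\alpha f\|_{L^2(\rr^d)}\le b^{|\alpha|}\|f\|_{L^2(\rr^d)}$ for every multi-index $\alpha$. Fix a large universal constant $K$ and declare a cube $Q_k$ \emph{bad} if there exists $\alpha\ne 0$ with $\int_{Q_k}|\partial^\alpha f|^2\ge (Kb)^{2|\alpha|}\int_{Q_k}|f|^2$; otherwise \emph{good}. Summing the defining inequality over bad $k$ and over $|\alpha|\ge1$ and comparing with the global Bernstein bound shows that, for $K$ universal and large enough, the bad cubes contribute at most $\tfrac12\|f\|_{L^2(\rr^d)}^2$. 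It therefore suffices to establish the target estimate on each good cube.

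On a good cube $Q_k$ I would reduce to a one-dimensional polynomial problem. A Sobolev/Markov inequality in dimension $d$ converts the $L^2$ derivative bounds into pointwise bounds at some point $x_0\in Q_k$ of the form $|\partial^\alpha f(x_0)|\le (C_d Kb)^{|\alpha|}L^{-d/2}\|f\|_{L^2(Q_k)}$, with $C_d$ of order $C^d$. The thickness hypothesis $|\omega\cap Q_k|\ge\gamma L^d$ combined with Fubini then provides a coordinate direction $e_j$ and a $(d-1)$-dimensional slice $y'$ such that the one-dimensional intersection $E:=\omega\cap\{x_0+y'+te_j:t\in[0,L]\}$ has measure at least $\gamma L/2$. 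On this segment $I$ the restriction $\phi(t)=f(x_0+y'+te_j)$ has Fourier transform supported in an interval of length at most $b$, and Taylor-expanding $\phi$ at $t_0\in I$ to order $N\simeq e\cdot KLb+\log(1/\gamma)$ approximates $\phi$ uniformly on $I$ by a polynomial $P_N$ of degree $N$ with error much smaller than $(C/\gamma)^{-N}\|\phi\|_{L^\infty(I)}$.

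I would then invoke the Turán--Nazarov inequality: for any polynomial $P$ of degree $N$ on $I$ and any measurable $E\subset I$,
\begin{equation*}
\|P\|_{L^\infty(I)}\le\Big(\tfrac{C|I|}{|E|}\Big)^N\|P\|_{L^\infty(E)}.
\end{equation*}
Applied with $E=\omega\cap I$ and $|I|=L$, this gives $\|\phi\|_{L^\infty(I)}\le (C/\gamma)^N\|\phi\|_{L^\infty(E)}$. Transferring back to $L^2$ via the band-limited structure of $\phi$ and integrating over admissible slices $y'$ produces
\begin{equation*}
\|f\|_{L^2(Q_k)}\le\Big(\tfrac{C^d}{\gamma}\Big)^{Cd(Lb+1)}\|f\|_{L^2(\omega\cap Q_k)}
\end{equation*}
on every good cube. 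Summing over good cubes and absorbing the bad-cube contribution via the first step yields \eqref{kovrijkine1.1}. The main obstacle is the dimensional bookkeeping: converting $L^2$ derivative control on $Q_k$ into the pointwise and one-dimensional bounds required by the Remez step inevitably loses a factor exponential in $d$, and balancing these losses — the Sobolev constant $C_d$, the Taylor truncation order $N$, and the selection of a $(d-1)$-slice preserving a $\gamma$-fraction of the thickness — is exactly what dictates the quantitative form $c(\gamma,d,L,b)=(C^d/\gamma)^{Cd(Lb+1)}$.
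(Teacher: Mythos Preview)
The paper does not give its own proof of this theorem; it is quoted from Kovrijkine \cite{Kovrijkine} as background, so there is no in-paper argument to compare against and your outline has to stand on its own.

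Your good/bad cube decomposition and the global Bernstein step are correct and are exactly Kovrijkine's scheme. The real gap is in the local step on a good cube. You propose to Taylor-expand the one-dimensional restriction $\phi$ to order $N\simeq eKLb+\log(1/\gamma)$ and then apply the polynomial Tur\'an--Nazarov inequality, claiming the remainder is $\ll(C/\gamma)^{-N}\|\phi\|_{L^\infty(I)}$. This fails for small $\gamma$. On a good cube one has $|\phi^{(n)}|\le (C'b)^n M$ with $M$ comparable to $\|\phi\|_{L^\infty(I)}$, so the Taylor remainder after $N$ terms is of order $(C'bL)^{N+1}M/(N+1)!$. With $N\simeq eKLb$ Stirling makes this only $\sim e^{-cN}M$, whereas absorbing the $(C/\gamma)^N$ loss from Tur\'an--Nazarov requires the remainder to be $\lesssim(\gamma/C)^N M$. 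Forcing that inequality drives $N\gtrsim bL/\gamma$, and the resulting constant becomes $(C/\gamma)^{cbL/\gamma}$, strictly worse than the stated $(C^d/\gamma)^{Cd(Lb+1)}$. Adding $\log(1/\gamma)$ to $N$ does not repair this.

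Kovrijkine's actual argument bypasses polynomial truncation altogether: since $\widehat\phi$ is compactly supported, $\phi$ extends to an entire function, and the derivative control on a good cube yields a bound of the form $\sup_{|z-t_0|\le 4L}|\phi(z)|\le e^{cbL}|\phi(t_0)|$. One then invokes a Remez-type inequality for \emph{bounded analytic functions} on a disk (proved via Jensen's formula or harmonic measure), whose exponent is $\sim\log\big(\sup_D|\phi|/|\phi(t_0)|\big)\sim bL$, with no $\gamma$ in the exponent. That is what produces the correct constant. A secondary issue: your slicing step first fixes a single $(d-1)$-slice $y'$ via Fubini, yet you later write ``integrating over admissible slices $y'$'' to return to $L^2(Q_k)$; these two moves are in tension, and the passage from an $L^\infty$ estimate on one line back to $\|f\|_{L^2(Q_k)}$ needs to be reorganised.
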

The thickness property was recently shown to play a key role in spectral inequalities for finite combinations of Hermite functions. In \cite[Theorem~2.1]{MP}, the authors establish quantitative estimates with an explicit dependence on the energy level $N$ with respect to the growth of the density appearing in Definition~\ref{thick_density}:

\medskip

\begin{theorem}[Pravda-Starov \& Martin]\label{Spectral}
Let $\rho : \rr^d \longrightarrow (0,+\infty)$ be a $\frac{1}{2}$-Lipschitz positive function with $\rr^d$ being equipped with the Euclidean norm,
such that there exist some positive constants $0< \eps \leq 1$, $m>0$, $R>0$ such that
\begin{equation*}
\forall x \in \rr^d, \quad 0<m \leq \rho(x) \leq R{\left\langle x\right\rangle}^{1-\eps}.
\end{equation*}
Let $\omega$ be a measurable subset of $\rr^d$ which is $\gamma$-thick with respect to the density $\rho$.
Then, there exist some positive constant $\kappa_d(m, R, \gamma, \eps)>0$, $\tilde{C}_d(\eps, R) >0$ and a positive universal constant $\tilde{\kappa}_d >0$ such that
\begin{equation}\label{spec_ineq}
\forall N \geq 1, \ \forall f \in \mathcal{E}_N, \quad \|f\|_{L^2(\rr^d)} \leq \kappa_d(m, R, \gamma, \eps) \Big( \frac{\tilde{\kappa}_d}{\gamma} \Big)^{\tilde{C}_d(\eps, R) N^{1-\frac{\eps}{2}}} \|f\|_{L^2(\omega)},
\end{equation}
with $\mathcal E_{N}$ being the finite dimensional vector space spanned by the Hermite functions $(\Phi_{\alpha})_{\val \alpha \leq N}$.
\end{theorem}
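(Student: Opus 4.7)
The plan is to combine a Whitney-type cover adapted to the variable scale $\rho$ with a local version of Kovrijkine's good-ball/bad-ball dichotomy, using Bernstein-type inequalities for finite Hermite expansions in place of the classical Bernstein inequality for band-limited functions. As a first step, the $\frac{1}{2}$-Lipschitz regularity of $\rho$ together with its lower bound $\rho \geq m$ allows one to build a locally finite cover of $\rr^d$ by balls $B_i = B(x_i,\rho(x_i))$ with bounded overlap; on each such ball the thickness hypothesis yields $|\omega \cap B_i| \geq \gamma |B_i|$, which is the input needed for the local dichotomy.

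The key analytic input is a Bernstein-type estimate for $f \in \mathcal{E}_N$:
\begin{equation*}
\|\partial^\alpha f\|_{L^2(\rr^d)} \leq (C\sqrt{N})^{|\alpha|} \|f\|_{L^2(\rr^d)} \quad \text{and} \quad \|x^\alpha f\|_{L^2(\rr^d)} \leq (C\sqrt{N})^{|\alpha|} \|f\|_{L^2(\rr^d)},
\end{equation*}
both of which follow from the fact that the operators $x_j$ and $D_{x_j}$ map $\mathcal{E}_N$ into $\mathcal{E}_{N+1}$ with operator norm of order $\sqrt{N}$. Combined with a local Sobolev embedding and Taylor expansion around any point in $\omega \cap B_i$, these bounds control the contribution of the $k$-th order term by $(C\rho(x_i)\sqrt{N})^k/k!$. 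Running Kovrijkine's good-ball/bad-ball argument with this local bound then yields, on good balls,
\begin{equation*}
\|f\|_{L^2(B_i)} \leq \left(\frac{C_d}{\gamma}\right)^{C_d(\rho(x_i)\sqrt{N}+1)} \|f\|_{L^2(\omega \cap B_i)},
\end{equation*}
where the exponent mirrors the factor $Lb+1$ of Theorem~\ref{Kovrijkine1}, with $\rho(x_i)$ playing the role of the scale $L$ and $\sqrt{N}$ playing the role of the bandwidth $b$.

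Summing these local estimates over good balls, using the bounded overlap of the cover, produces a global bound. For balls centered in $\{|x_i|\leq A\sqrt{N}\}$ the hypothesis $\rho(x_i)\leq R\langle x_i\rangle^{1-\eps}$ gives $\rho(x_i)\sqrt{N}\lesssim RN^{1-\eps/2}$, which is precisely the exponent appearing in \eqref{spec_ineq}. The tail contribution from balls with $|x_i|\gtrsim \sqrt{N}$ is handled by the moment inequality $\|\langle x\rangle^k f\|_{L^2}\leq (C\sqrt{N})^k\|f\|_{L^2}$: choosing $k$ of order $N$ shows the $L^2$-mass of $f$ outside $B(0,A\sqrt{N})$ is negligible and can be absorbed. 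The main obstacle is the bookkeeping step: one must track the explicit dependence on $d$, $\gamma$, $\eps$ and $R$ in the constants $\kappa_d$, $\tilde{\kappa}_d$, $\tilde{C}_d$, and ensure that summing over overlapping balls of variable size does not produce an exponent worse than $N^{1-\eps/2}$. This forces a careful threshold in the dichotomy together with tight use of the bounded overlap, and it is at this point that the polynomial growth hypothesis $\rho(x)\lesssim \langle x\rangle^{1-\eps}$ with $\eps>0$ is genuinely needed rather than merely convenient: without a strictly subquadratic bound on $\rho(x)\sqrt{N}$ in the relevant range of $x_i$, the exponent in $N$ would become $\geq N$, ruining the estimate.
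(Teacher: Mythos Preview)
Your overall strategy is correct and matches the one the paper attributes to \cite{MP} and reproduces in the proof of Theorem~\ref{general_uncertaintyprinciple}: a bounded-overlap cover by balls $B(x_k,\rho(x_k))$ coming from the slowly varying metric (Lemma~\ref{slowmet}, Theorem~\ref{slowmetric}), a Kovrijkine good/bad dichotomy, Sobolev embedding to pass to $L^\infty$ on each ball, and then a Remez-type inequality on good balls.

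The one genuine structural difference is where the growth bound $\rho(x)\le R\langle x\rangle^{1-\eps}$ enters. You feed only the \emph{unweighted} Bernstein inputs $\|\partial^\alpha f\|_{L^2}\le (C\sqrt N)^{|\alpha|}\|f\|_{L^2}$ into the dichotomy; this produces a ball-dependent exponent $\rho(x_i)\sqrt N$ in the good-ball estimate, which then forces your extra near/far splitting at radius $A\sqrt N$ and the separate tail absorption via the moment bound. The paper's route is cleaner: it uses the \emph{mixed} estimate of Lemma~\ref{lem1}, $\|x^\alpha\partial_x^\beta f\|_{L^2}\lesssim (C\sqrt N)^{|\alpha|+|\beta|}\|f\|_{L^2}$, converts it through $\rho(x)\le R\langle x\rangle^{1-\eps}$ into a weighted Bernstein inequality $\|\rho(x)^p\partial_x^\beta f\|_{L^2}\lesssim (C N^{1-\eps/2})^{p+|\beta|}\|f\|_{L^2}$, and defines good balls by the two-parameter condition~\eqref{good_ball}. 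After rescaling, every good ball then satisfies $\|\partial^\beta\phi\|_{L^\infty(B(0,1))}\lesssim (C N^{1-\eps/2})^{|\beta|}$ \emph{uniformly in the ball index}, so the Remez step gives the exponent $N^{1-\eps/2}$ directly with no near/far case analysis and no tail to absorb. Your version works, but the bookkeeping you flag as ``the main obstacle'' largely disappears once the weight is built into the good-ball definition rather than handled afterwards.
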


\medskip
We refer the reader to Section~\ref{Hermite_functions} for the definition and some notations related to Hermite functions $(\Phi_{\alpha})_{\alpha \in \nn^d}$. 
We emphasize that Theorem~\ref{Spectral} ensures, in particular, the equivalence of the norms $\| \cdot \|_{L^2(\rr^d)}$ and $\| \cdot \|_{L^2(\omega)}$ on the subspace $\mathcal{E}_N$ as soon as the measurable subspace $\omega$ is thick with respect to a suitable density. Actually, contrary to the case when the functional subspace is the space of functions whose Fourier transforms are compactly supported, this fact holds true as soon as $\omega$ is a measurable subset of positive measure. As explained by the authors of \cite[Section~2]{kkj}, the analyticity property of finite combinations of Hermite functions together with an argument of finite dimension imply that for all $N \in \nn$, there exists a positive constant $C_N(\omega)>0$ such that $$\forall f \in \mathcal{E}_N, \quad \|f\|_{L^2(\rr^d)} \leq C_N(\omega) \|f \|_{L^2(\omega)},$$
as soon as $|\omega| >0$. The main interest of Theorem~\ref{Spectral} is the quantitative estimate from above on the growth of the positive constant $C_N(\omega)$ with respect to the energy level $N$, which is explicitly related to the growth of the density $\rho$ thanks to $\eps$. As the norms $\| \cdot \|_{L^2(\rr^d)}$ and $\| \cdot \|_{L^2(\omega)}$ are not equivalent on $L^2(\rr^d)$ when $|\rr^d \setminus \omega|>0$, the constant $C_N(\omega)$ does have to blow up when $N$ tends to infinity. However, the asymptotic of this blow-up is very much related to the geometric properties of the control set $\omega$, and understanding this asymptotic can be assessed as an uncertainty principle.


One of the purpose of this work is to establish new uncertainty principles holding in a general class of Gelfand-Shilov spaces and to provide sufficient conditions on the growth of the density allowing these uncertainty principles to hold. Furthermore, this paper aims at providing new null-controllability results as a byproduct of these uncertainty principles.
Indeed, some recent works have highlighted the key link between uncertainty principles and localized control of evolution equations matters. Thanks to the explicit dependence of the constant with respect to the length of the sides of the cube in \eqref{kovrijkine1.1}, Egidi and Veseli\'c~ \cite{veselic}; and Whang, Whang, Zhang and Zhang \cite{Wang} have independently established that the heat equation  
\begin{equation*}\label{heat}
\left\lbrace \begin{array}{ll}
(\partial_t -\Delta_x)f(t,x)=u(t,x)\un_{\omega}(x)\,, \quad &  x \in \mathbb{R}^d,\ t>0, \\
f|_{t=0}=f_0 \in L^2(\rr^d),                                       &  
\end{array}\right.
\end{equation*}
is null-controllable in any positive time $T>0$ from a measurable control subset $\omega \subset \rr^d$ if and only if the control subset $\omega$ is thick in $\rr^d$. By using the same uncertainty principle, Alphonse and Bernier established in \cite{AlphonseBernier} that the thickness condition is necessary and sufficient for the null-controllability of fractional heat equations
\begin{equation}\label{fractional_heat}
\left\lbrace \begin{array}{ll}
(\partial_t + (-\Delta_x)^s)f(t,x)=u(t,x)\un_{\omega}(x)\,, \quad &  x \in \mathbb{R}^d,\ t>0, \\
f|_{t=0}=f_0 \in L^2(\rr^d),                                       &  
\end{array}\right.
\end{equation}
when $s >\frac{1}{2}$. On the other hand, Koenig showed in \cite[Theorem~3]{Koenig} and \cite[Theorem~2.3]{Koenig_thesis} that the null-controllability of \eqref{fractional_heat} fails from any non-dense measurable subset of $\rr$ when $0< s \leq \frac{1}{2}$. In \cite{AlphonseMartin}, Alphonse and the author point out the fact that the half heat equation, which is given by \eqref{fractional_heat} with $s= \frac{1}{2}$, turns out to be approximately null-controllable with uniform cost if and only if the control subset is thick. 
Regarding the spectral inequalities in Theorem~\ref{Spectral}, thanks to the quantitative estimates \eqref{spec_ineq}, Pravda-Starov and the author established in \cite[Corollary~2.6]{MP} that the fractional harmonic heat equation 
 \begin{equation*}
\left\lbrace \begin{array}{ll}
\partial_tf(t,x) + (-\Delta_x+ |x|^2)^s f(t,x)=u(t,x)\un_{\omega}(x), \quad &  x \in \mathbb{R}^d,\ t>0, \\
f|_{t=0}=f_0 \in L^2(\rr^d),                                       &  
\end{array}\right.
\end{equation*}
with $\frac{1}{2} < s \leq 1$, is null-controllable at any positive time from any measurable set $\omega$ which is thick with respect to the density 
\begin{equation*}
\forall x \in \rr^d, \quad \rho(x)= R \langle x \rangle^{\delta},
\end{equation*}
with $0 \leq \delta < 2s-1$ and $R>0$.
More generally, the result of \cite[Theorem~2.5]{MP} shows that this thickness condition is a sufficient condition for the null-controllability of a large class of evolution equations associated to a closed operator whose $L^2(\rr^d)$-adjoint generates a semigroup enjoying regularizing effects in specific symmetric Gelfand-Shilov spaces $S_{\frac{1}{2s}}^{\frac{1}{2s}}$.

The sufficiency of the thickness conditions for control subsets to ensure null-controllability results for these evolution equations is derived from an abstract observability result based on an adapted Lebeau-Robbiano method established by Beauchard and Pravda-Starov with some contributions of Miller in \cite[Theorem~2.1]{BeauchardPravdaStarov}. This abstract observability result was extended in~\cite[Theorem~3.2]{BEP} to the non-autonomous case with moving control supports under weaker dissipation estimates allowing a controlled blow-up for small times in the dissipation estimates.
The main limitation in the work \cite{MP} is that Hermite expansions can only characterize symmetric Gelfand-Shilov spaces (see Section~\ref{gelfand}) and therefore, the null-controllability results in \cite{MP} are limited to evolution equations enjoying only symmetric Gelfand-Shilov smoothing effects. This work partially adresses this matter by investigating the null-controllability of evolution equations associated to anharmonic oscillators, which are known to regularize in non-symmetric Gelfand-Shilov spaces. More generally, we establish null-controllability results for abstract evolution equations whose adjoint systems enjoy smoothing effects in non-symmetric Gelfand-Shilov spaces. This work precisely describes how the geometric properties of the control subset are related to the two indexes $\mu, \nu$ defining the Gelfand-Shilov space $S^{\mu}_{\nu}$.

This paper is organized as follows: 
In Section~\ref{uncertainty_principle_general}, new uncertainty principles and quantitative estimates are presented. We first establish uncertainty principles for a general class of Gelfand-Shilov spaces in Section~\ref{general_GS}. In a second time, we deal with the particular case of spaces of functions with weighted Hermite expansions in Section~\ref{up_symmetric_GS}. These results are derived from sharp estimates for quasi-analytic functions established by Nazarov, Sodin and Volberg in \cite{NSV}. Some facts and results related to quasi-analytic functions are recalled in Sections~\ref{main_results} and \ref{qa_section}.
Thanks to these new uncertainty principles, we establish sufficient geometric conditions for the null-controllability of evolutions equations with adjoint systems enjoying quantitative Gelfand-Shilov smoothing effects in Section~\ref{null_controllability_results}. 

 
\section{Statement of the main results}\label{main_results}
The main results contained in this work are the quantitative uncertainty principles holding for general Gelfand-Shilov spaces given in Theorem~\ref{general_uncertaintyprinciple}. The first part of this section is devoted to present these new uncertainty principles and to discuss the particular case of spaces of functions with weighted Hermite expansions. In a second part, we deduce from these new uncertainty principles some null-controllability results for abstract evolution equations with adjoint systems enjoying Gelfand-Shilov smoothing effects. Before stating these results, miscellaneous facts and notations need to be presented.
A sequence $\mathcal{M}=(M_p)_{p \in \nn}$ of positive real numbers is said to be \textit{logarithmically convex} if
\begin{equation*}\label{log_conv}
\forall p \geq 1, \quad M_p^2 \leq M_{p+1} M_{p-1},
\end{equation*}
where $\nn$ denotes the set of non-negative integers.
Let $U$ be an open subset of $\rr^d$, with $d \geq 1$. We consider the following class of smooth functions defined on $U$ associated to the sequence $\mathcal{M}$,
\begin{equation*}\label{function_class}
\mathcal{C}_{\mathcal{M}}(U)= \left\{ f \in \mathcal{C}^{\infty}(U, \cc): \quad \forall \beta \in \nn^d, \;  \|\partial_x^{\beta} f \|_{L^{\infty}(U)} \leq M_{|\beta|} \right\}.
\end{equation*}
A logarithmically convex sequence $\mathcal{M}$ is said to be quasi-analytic if the class of smooth functions $\mathcal{C}_{\mathcal{M}}((0,1))$ associated to $\mathcal{M}$ is quasi-analytic, that is, when the only function in $\mathcal{C}_{\mathcal{M}}((0,1))$ vanishing to infinite order at a point in $(0,1)$ is the zero function. A necessary and sufficient condition on the logarithmically convex sequence $\mathcal{M}$ to generate a quasi-analytic class is given by  the Denjoy-Carleman theorem (see e.g. \cite{Koosis}):

\medskip
\begin{theorem}[Denjoy-Carleman] \label{Den_Carl_thm}
Let $\mathcal{M}=(M_p)_{p \in \nn}$ be a logarithmically convex sequence of positive real numbers. The sequence $\mathcal{M}$ defines a quasi-analytic sequence if and only if 
\begin{equation*}
\sum_{p= 1}^{+\infty} \frac{M_{p-1}}{M_p} = + \infty.
\end{equation*}
\end{theorem}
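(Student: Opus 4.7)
The plan is to route the argument through the \emph{associated function} $T(r) = \sup_{p \in \nn} r^p/M_p$ for $r > 0$. Log-convexity of $(M_p)$ makes $\log T(e^t)$ piecewise affine in $t$, with slope $p$ on the interval where the supremum is attained at index $p$; a summation-by-parts computation then yields the classical equivalence
\begin{equation*}
\sum_{p \geq 1} \frac{M_{p-1}}{M_p} = +\infty \;\Longleftrightarrow\; \int_1^{+\infty} \frac{\log T(r)}{r^2}\, dr = +\infty,
\end{equation*}
which converts the Denjoy--Carleman statement into Carleman's original logarithmic integral criterion. The whole game is then to transfer back and forth between the sequence side and the pointwise/analytic side through $T$.

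For the \emph{sufficiency} direction ($\sum = +\infty \Rightarrow$ quasi-analyticity), suppose $f \in \mathcal{C}_{\mathcal{M}}((0,1))$ satisfies $\partial^k f(x_0) = 0$ for every $k$ at some $x_0 \in (0,1)$. Taylor's formula combined with $\|\partial^p f\|_{L^\infty} \leq M_p$ yields $|f(x)| \leq M_p |x-x_0|^p/p!$ for each $p$, hence $|f(x)| \lesssim 1/T(c/|x-x_0|)$ for some $c > 0$. After a compactly supported truncation and a shift putting $x_0$ at the origin, the next step is to build a holomorphic extension --- either via the Fourier--Laplace transform of the truncation or via the Poisson harmonic majorant of $\log|f|$ --- producing a function $F$ analytic in a half-plane with boundary decay dictated by $-\log T(1/|\cdot|)$. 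The classical Carleman--Ahlfors uniqueness theorem for analytic functions in a half-plane, whose hypothesis is precisely the divergence of $\int r^{-2}\log T(r)\,dr$, then forces $F \equiv 0$ and therefore $f \equiv 0$ near $x_0$; a standard connectedness/propagation argument extends this to $f \equiv 0$ on $(0,1)$.

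For the \emph{necessity} direction ($\sum < +\infty \Rightarrow$ non-quasi-analyticity), I would construct an explicit non-trivial bump via Paley--Wiener. Setting $a_k = M_{k-1}/M_k$, which is summable by assumption, define $f$ through its Fourier transform $\hat{f}(\xi) = \prod_{k \geq 1} \mathrm{sinc}(a_k \xi)$, or equivalently as the infinite convolution $\mu_1 * \mu_2 * \cdots$ of uniform densities $\mu_k = (2a_k)^{-1}\ind{[-a_k, a_k]}$. Since $\sum a_k < +\infty$, the product converges to an entire function of exponential type $\sum a_k$, so $f$ is compactly supported in $[-\sum a_k, \sum a_k]$ by the Paley--Wiener theorem. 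Placing derivatives on the first factors, $\partial^p f = \partial\mu_1 * \cdots * \partial\mu_p * (\mu_{p+1} * \mu_{p+2} * \cdots)$, and using that each $\partial\mu_k$ has total variation $1/a_k$, gives $\|\partial^p f\|_{L^\infty} \lesssim \prod_{k=1}^{p} a_k^{-1} = M_p/M_0$ up to a tail factor absorbed by mild rescaling and a shift in indices. Translating so that $f$ is supported inside $(0,1)$ produces the required non-trivial element of $\mathcal{C}_{\mathcal{M}}((0,1))$ vanishing to infinite order at one endpoint of its support.

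I expect the main obstacle to be the sufficiency direction: turning the pointwise Taylor majorant $|f(x)| \lesssim 1/T(c/|x-x_0|)$ into a genuine holomorphic object in a half-plane with the right growth, and invoking the sharp Carleman--Ahlfors rigidity theorem, is where the proof draws on the genuinely hard harmonic analysis of functions of slow growth (Phragmén--Lindelöf, logarithmic integrability of boundary data). The Denjoy reduction to an integral criterion and the Paley--Wiener construction for the converse are, by comparison, essentially algebraic and combinatorial.
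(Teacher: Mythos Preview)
The paper does not actually prove this theorem: it is stated as the classical Denjoy--Carleman theorem with the parenthetical ``(see e.g.~\cite{Koosis})'' and then used freely throughout, so there is no in-paper argument to compare your proposal against.

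That said, your sketch is a faithful outline of the classical proof as found in Koosis or in H\"ormander's \emph{Analysis of Linear Partial Differential Operators~I}. The reduction to the logarithmic-integral criterion via the associated function $T(r)=\sup_p r^p/M_p$, the Carleman--Ahlfors rigidity in a half-plane for the sufficiency direction, and the infinite convolution $\mu_1*\mu_2*\cdots$ of uniform densities for the necessity direction are exactly the standard ingredients. One small point worth tightening: the Taylor remainder gives $|f(x)|\le \frac{M_p}{p!}\,|x-x_0|^p$, so the associated function that naturally appears is that of $(M_p/p!)_p$ rather than $(M_p)_p$; under log-convexity one checks that the divergence of $\sum M_{p-1}/M_p$ is equivalent to the divergence of the corresponding integral for this shifted sequence, which is where the computation needs a line or two of care. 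Otherwise the plan is sound and matches the literature the paper defers to.
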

\medskip

Let us now introduce the notion of Bang degree defined in \cite{Bang} and \cite{NSV}, and used by Jaye and Mitkovski in \cite{JayeMitkovski},
\begin{equation}\label{Bang}
\forall 0<t \leq 1, \forall r>0, \quad 0 \leq n_{t, \mathcal{M},r}= \sup\Big\{N \in \nn: \, \sum_{-\log t < n \leq N} \frac{M_{n-1}}{M_n} < r \Big\}\leq +\infty,
\end{equation}
where the sum is taken equal to $0$ when $N=0$. Notice that if $\mathcal{M}$ is quasi-analytic, then the Bang degree $n_{t, \mathcal{M},r}$ is finite for any $0<t \leq 1$ and $r>0$.
This Bang degree allows the authors of \cite{JayeMitkovski} to obtain uniform estimates for $L^2$-functions with fast decaying Fourier transforms and to establish uncertainty principles for a general class of Gevrey spaces. These authors also define
\begin{equation}\label{def_gamma}
\forall p \geq 1, \quad \gamma_{\mathcal{M}}(p) = \sup \limits_{1 \leq j \leq p} j \Big(\frac{M_{j+1} M_{j-1}}{M_j^2} -1\Big) \quad  \text{and} \quad  \Gamma_{\mathcal{M}} (p)= 4 e^{4+4\gamma_{\mathcal{M}}(p)}.
\end{equation}
We refer the reader to the Section~\ref{qa_section} for some examples and useful results about quasi-analytic sequences.
\newpage

\subsection{Some uncertainty principles}\label{uncertainty_principle_general}
\subsubsection{Uncertainty principles in general Gelfand-Shilov spaces}\label{general_GS}
In this section, we study uncertainty principles holding in general Gelfand-Shilov spaces. We consider the following subspaces of smooth functions 
\begin{equation*}\label{gelfandshilov}
GS_{\mathcal{N},\rho} := \Big\{ f \in \mathcal{C}^{\infty}(\rr^d), \quad \sup_{k \in \nn,\ \beta \in \nn^d} \frac{\| \rho(x)^k \partial_x^{\beta} f \|_{L^2(\rr^d)}}{N_{k,|\beta|}} < +\infty \Big\},
\end{equation*}
where $\rho : \rr^d \longrightarrow (0,+\infty)$ is a positive measurable function and $\mathcal{N}=(N_{p,q})_{(p,q) \in \nn^2}$ is a sequence of positive real numbers. Associated to these spaces, are the following semi-norms
\begin{equation*}
\forall f \in GS_{\mathcal{N},\rho}, \quad \|f\|_{GS_{\mathcal{N},\rho}} = \sup_{k \in \nn, \ \beta \in \nn^d} \frac{\| \rho(x)^k \partial_x^{\beta} f \|_{L^2(\rr^d)}}{N_{k,|\beta|}}.
\end{equation*}
When $$\forall x \in \rr^d, \quad \rho(x)= \langle x \rangle= (1+\|x\|^2)^{\frac{1}{2}}$$ and $\mathcal{N}=\big( C^{p+q}(p!)^{\nu} (q!)^{\mu} \big)_{(p,q) \in \nn^2}$ for some $C \geq 1$ and $\mu, \nu >0$ with $\mu+\nu \geq1$, $GS_{\mathcal{N}, \rho}$ is a subspace of the classical Gelfand-Shilov space $\mathcal{S}^{\mu}_{\nu}$, whereas when $\rho \equiv 1$, the space $GS_{\mathcal{N}, \rho}$ characterizes some Gevrey type regularity. We choose here to not discuss this particular case since it is studied in the recent works \cite{AlphonseMartin, JayeMitkovski}. In the following, a positive function $\rho : \rr^d \longrightarrow (0,+\infty)$ is said to be a contraction mapping when there exists $0\leq L <1$ such that
$$\forall x,y \in \rr^d, \quad |\rho(x)-\rho(y)| \leq L \|x-y\|,$$ 
where $\| \cdot \|$ denotes the Euclidean norm. A double sequence of real numbers $\mathcal{N}=(N_{p,q})_{(p,q) \in \nn^2}$ is said to be non-decreasing with respect to the two indexes when
$$\forall p \leq p', \forall q \leq q', \quad N_{p,q} \leq N_{p',q'}.$$
The following result provides some uncertainty principles holding for the spaces $GS_{\mathcal{N}, \rho}$:

\medskip

\begin{theorem}\label{general_uncertaintyprinciple}
Let $0<\gamma \leq 1$, $\mathcal{N}=(N_{p, q})_{(p,q) \in \nn^2} \in (0,+\infty)^{\nn^2}$ be a non-decreasing sequence with respect to the two indexes such that the diagonal sequence $\mathcal{M}=(N_{p,p})_{p \in \nn} \in (0,+\infty)^{\nn}$ defines a logarithmically-convex quasi-analytic sequence and  $\rho : \rr^d \longrightarrow (0,+\infty)$ a positive contraction mapping
such that there exist some constants $m>0$, $R>0$ so that
\begin{equation*}
\forall x \in \rr^d, \quad 0<m \leq \rho(x) \leq R \langle x \rangle.
\end{equation*} 
 Let $\omega$ be a measurable subset of $\rr^d$. If $\omega$ is $\gamma$-thick with respect to $\rho$, then there exist some positive constants $ K=K(d,\rho) \geq 1$, $K'=K'(d,\rho, \gamma)\geq 1$, $r=r(d, \rho) \geq 1$ depending on the dimension $d \geq 1$, on $\gamma$ for the second and on the density $\rho$ such that for all $0<\eps \leq N^2_{0,0}$,
 \begin{equation*}
\forall f \in GS_{\mathcal{N},\rho}, \quad  \|f\|^2_{L^2(\rr^d)} \leq C_{\eps} \|f\|^2_{L^2(\omega)} + \eps \|f\|^2_{GS_{\mathcal{N},\rho}},
 \end{equation*} 
 where
 \begin{equation*}
 C_{\eps}= K' \bigg(\frac{2d}{\gamma} \Gamma_{\mathcal{M}}(2n_{t_0, \mathcal{M}, r}) \bigg)^{4n_{t_0, \mathcal{M}, r} }
 \end{equation*}
 with $n_{t_0, \mathcal{M}, r}$ being defined in \eqref{Bang} and
 \begin{equation*}
 t_0=\frac{\eps^{\frac{1}{2}}}{K N_{d,d}}.
 \end{equation*}
\end{theorem}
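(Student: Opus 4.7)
The plan is to combine a Whitney-type covering of $\mathbb{R}^d$ adapted to the density $\rho$ with a local quasi-analytic Remez-type inequality à la Nazarov--Sodin--Volberg (as exploited by Jaye and Mitkovski), then rescale and sum up the local contributions.

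First, because $\rho$ is a positive contraction with Lipschitz constant $L<1$, bounded below by $m>0$, a standard Besicovitch/Whitney argument produces a countable family of centers $(x_j)_{j\in J}\subset \mathbb{R}^d$ and a radius parameter $r=r(d,\rho)\in(0,1]$ such that the balls $B(x_j, r\rho(x_j))$ cover $\mathbb{R}^d$ with overlap uniformly bounded by a constant depending only on the dimension, and such that $\rho(y)$ and $\rho(x_j)$ are comparable uniformly for $y\in B(x_j, r\rho(x_j))$. The $\gamma$-thickness of $\omega$ with respect to $\rho$ then transfers to a uniform lower bound $|\omega\cap B(x_j, r\rho(x_j))|\geq \gamma'|B(x_j, r\rho(x_j))|$ where $\gamma'=\gamma'(\gamma,L,d)$.

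Second, on each ball I rescale via $y\mapsto x_j+\rho(x_j)y$ and set $f_j(y):=f(x_j+\rho(x_j)y)$, which reduces the problem to the unit ball. The identity $\partial_y^{\beta}f_j=\rho(x_j)^{|\beta|}(\partial_x^{\beta}f)(x_j+\rho(x_j)y)$ together with the comparability of $\rho$ on the ball and the definition of the $GS_{\mathcal{N},\rho}$ seminorm implies that $f_j$ satisfies smoothness bounds driven by the diagonal sequence $\mathcal{M}=(N_{p,p})_{p\in\mathbb{N}}$, which is log-convex and quasi-analytic by hypothesis. I would then apply the local quasi-analytic Remez-type estimate of \cite{NSV,JayeMitkovski}: whenever $E\subset B(0,1)$ has relative measure at least $\gamma'$, for every $0<t\leq 1$,
\begin{equation*}
\|f_j\|_{L^2(B(0,1))}^2 \leq \Bigl(\frac{C_d}{\gamma'}\Gamma_{\mathcal{M}}(2n_{t,\mathcal{M},r})\Bigr)^{4n_{t,\mathcal{M},r}}\|f_j\|_{L^2(E)}^2 + t^2 A_j^2,
\end{equation*}
where $A_j$ is controlled by $\|\partial^{\beta}f_j\|_{L^{\infty}}$ up to orders tied to the Bang degree, and therefore, after undoing the change of variables, by $N_{d,d}$-weighted Sobolev norms of $f$. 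Summing over $j$ using bounded overlap, the Jacobians $\rho(x_j)^d$ cancel between the two sides; the left-hand side reproduces $\|f\|_{L^2(\mathbb{R}^d)}^2$ up to a dimensional constant, the first term on the right gives a multiple of $\|f\|_{L^2(\omega)}^2$, and the aggregate error is dominated by a multiple of $t^2 \|f\|_{GS_{\mathcal{N},\rho}}^2$ thanks to the monotonicity of $\mathcal{N}$ and the upper bound $\rho(x_j)\leq R\langle x_j\rangle$. Setting $t=t_0=\varepsilon^{1/2}/(K N_{d,d})$ with $K=K(d,\rho)$ absorbing the geometric constants then produces precisely the announced inequality.

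The main obstacle lies in the local step: one must extract from the sharp NSV quasi-analytic inequality a version whose error term, once the rescaling is undone, is expressible purely through the two-index weighted seminorm of $GS_{\mathcal{N},\rho}$. This requires the weights $\rho(x_j)^{|\beta|}$ appearing from rescaling derivatives and the decay weights $\rho^k$ entering the definition of $GS_{\mathcal{N},\rho}$ to fit together consistently across the cover, which is where the hypotheses that $\mathcal{N}$ is non-decreasing in both indexes and that $\rho(x_j)\leq R\langle x_j\rangle$ enter crucially. A secondary subtlety is ensuring that the Bang degree $n_{t_0,\mathcal{M},r}$ arising after summation is genuinely uniform in $j$; this is achieved by applying the local inequality with the same threshold $t_0$ on every ball, finiteness being guaranteed by the quasi-analyticity of $\mathcal{M}$ via the Denjoy--Carleman theorem.
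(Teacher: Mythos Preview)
Your outline has the right global architecture (Whitney-type cover adapted to $\rho$, rescale to the unit ball, invoke a Nazarov--Sodin--Volberg estimate, sum up), but the local step as you state it does not exist in the form you need, and this creates a genuine summability gap.

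The NSV/Jaye--Mitkovski inequality is purely multiplicative: for $g\in\mathcal{C}_{\mathcal{M}}(B(0,1))$ with $\|g\|_{L^{\infty}}\geq t$ one has $\|g\|_{L^{2}(B(0,1))}\leq C(t,\gamma)\|g\|_{L^{2}(E)}$, with \emph{no} additive remainder. To manufacture the additive term $t^{2}A_{j}^{2}$ you must first normalise $f_{j}$ into the class by dividing by
\[
A_{j}=\sup_{\beta\in\nn^{d}}\frac{\|\partial^{\beta}f_{j}\|_{L^{\infty}(B(0,1))}}{M'_{|\beta|}}
\]
and then split into the cases $\|f_{j}/A_{j}\|_{L^{\infty}}\gtrless t$. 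The trouble is the supremum: after Sobolev embedding and undoing the change of variables, each $\rho(x_{j})^{d}A_{j}^{2}$ is controlled only by a supremum over $\beta$ of \emph{local} weighted derivative norms $\|\rho^{|\beta|+d}\partial^{\beta+\tilde\beta}f\|_{L^{2}(B_{j})}^{2}/(M'_{|\beta|})^{2}$. If you bound this by the global $GS_{\mathcal{N},\rho}$ seminorm, every ball contributes $\sim t^{2}\|f\|_{GS_{\mathcal{N},\rho}}^{2}$ and the sum over the infinite cover diverges. If instead you keep the local norms, the $\sup_{\beta}$ (with $\beta$ depending on $j$) cannot be exchanged with $\sum_{j}$ without inserting extra geometric weights in $\beta$, which in turn changes the quasi-analytic class and hence the Bang degree appearing in $C(t,\gamma)$. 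You flagged this as ``the main obstacle'', but you have not resolved it.

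The paper resolves it by the Kovrijkine good/bad ball dichotomy rather than by an additive local error. A ball $B_{k}$ is declared \emph{good} when, for every $(p,\beta)$,
\[
\|\rho^{p}\partial^{\beta}f\|_{L^{2}(B_{k})}^{2}\leq \eps^{-1}2^{2(p+|\beta|)+d+1}K_{0}N_{p,|\beta|}^{2}\|f\|_{L^{2}(B_{k})}^{2},
\]
and \emph{bad} otherwise. On a bad ball the defining inequality fails for some $(p_{0},\beta_{0})$, so $\|f\|_{L^{2}(B_{k})}^{2}$ is dominated by a geometrically weighted sum in $(p,\beta)$ of local derivative norms; summing over bad balls and then over $(p,\beta)$ gives exactly $\eps\|f\|_{GS_{\mathcal{N},\rho}}^{2}$ with a convergent series. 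On a good ball the rescaled function $\phi(y)=\eps^{1/2}\rho(x_{k})^{d/2}f(x_{k}+\rho(x_{k})y)/(K_{d,m,L}N_{d,d}\|f\|_{L^{2}(B_{k})})$ lies in a fixed quasi-analytic class $\mathcal{C}_{\mathcal{M}'}$ with $\|\phi\|_{L^{\infty}}\geq t_{0}$, so the $L^{2}$ multidimensional NSV estimate applies with \emph{no} remainder and yields $\|f\|_{L^{2}(B_{k})}^{2}\leq C_{\eps}\|f\|_{L^{2}(\omega\cap B_{k})}^{2}$ uniformly in $k$. Summing over good balls with bounded overlap then gives the $\omega$-term. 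The point is that the good/bad split replaces your problematic $\sup_{\beta}$ by a \emph{sum} in $(p,\beta)$ with built-in summable weights $2^{-2(p+|\beta|)}$, which is precisely what makes the aggregate error finite.
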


\medskip 
It is particularly interesting to notice that Theorem~\ref{general_uncertaintyprinciple} provides a quantitative estimate of the constant $C_{\eps}$ with respect to the different parameters. In specific cases, the Bang degree is easily computable (see Lemma~\ref{ex_qa_sequence}) and an explicit upper bound on the constant $C_{\eps}$ can be obtained. The above uncertainty principles apply in particular to the case of the classical Gelfand-Shilov spaces $S_{\nu}^{\mu}(\rr^d)$ as follows:

\medskip

\begin{theorem}\label{specific_GS_uncertaintyprinciple}
Let $A \geq 1$, $0<\mu \leq 1$, $\nu >0$ with $\mu+\nu \geq 1$ and $0\leq \delta \leq \frac{1-\mu}{\nu}\leq 1$. Let $\rho : \rr^d \longrightarrow (0,+\infty)$ be a positive contraction mapping such that there exist some constants $m>0$, $R>0$ so that
\begin{equation*}
\forall x \in \rr^d, \quad 0<m \leq \rho(x) \leq R{\left\langle x\right\rangle}^{\delta}.
\end{equation*} 
 Let $\omega$ be a measurable subset of $\rr^d$. If $\omega$ is thick with respect to $\rho$, then for all $0<\eps \leq 1$, there exists a positive constant $C_{\eps,A}>0$ such that for all $f \in \mathscr{S}(\rr^d)$,
 \begin{equation}\label{up_schwartz}
 \| f \|^2_{L^2(\rr^d)} \leq C_{\eps,A} \|f\|^2_{L^2(\omega)} + \eps \sup_{p \in \nn, \beta \in \nn^d} \bigg(\frac{\|\langle x\rangle^{p} \partial^{\beta}_{x} f\|_{L^2(\rr^d)}}{A^{p+|\beta|} (p!)^{\nu}(|\beta|!)^{\mu}}\bigg)^2,
 \end{equation} 
where, when $\delta < \frac{1-\mu}{\nu}$, there exists a positive constant $K=K(d, \gamma, \rho,\mu, \nu) \geq 1$ depending on the dimension $d$, $\rho$ and $\nu$ such that
  $$0<C_{\eps,A} \leq e^{K(1-\log \eps +A^{\frac{2}{1-\mu-\delta \nu}})}, $$
whereas, when $\delta= \frac{1-\mu}{\nu}$, there exists a positive constant $K=K(d, \gamma, \rho, \mu, \nu) \geq 1$ depending on the dimension $d$, $\rho$ and $\nu$ such that
 $$0<C_{\eps,A} \leq e^{K(1-\log \eps+\log A)e^{KA^2}}.$$
\end{theorem}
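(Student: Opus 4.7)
The plan is to derive the result from Theorem~\ref{general_uncertaintyprinciple} applied with a carefully chosen sequence $\mathcal{N}$. Given $f \in \mathscr{S}(\rr^d)$, denote by
\begin{equation*}
N(f) := \sup_{p \in \nn,\ \beta \in \nn^d} \frac{\|\langle x\rangle^p \partial_x^\beta f\|_{L^2(\rr^d)}}{A^{p+|\beta|}(p!)^\nu(|\beta|!)^\mu}
\end{equation*}
the Gelfand--Shilov semi-norm on the right-hand side of \eqref{up_schwartz}. First I would bound weighted derivatives in terms of $N(f)$: writing $\rho(x)^k \leq R^k \langle x\rangle^{k\delta}$ and using $\|\langle x\rangle^{k\delta}\partial^\beta f\|_{L^2} \leq \|\langle x\rangle^{\lceil k\delta\rceil}\partial^\beta f\|_{L^2}$, Stirling's formula yields a constant $C_0=C_0(\nu)>0$ with $(\lceil k\delta\rceil!)^\nu \leq C_0^k (k!)^{\nu\delta}$ for every $k\in\nn$; combined with $A^{\lceil k\delta\rceil} \leq A\cdot A^{k\delta}$ (since $A\geq 1$), this gives
\begin{equation*}
\|\rho^k\partial^\beta f\|_{L^2(\rr^d)} \leq A\, B^{k+|\beta|} (k!)^{\nu\delta}(|\beta|!)^\mu\, N(f),
\end{equation*}
where $B := \max(RC_0 A^\delta, A) \leq (RC_0+1)A$, the last inequality relying on $A^\delta \leq A$ (valid since $\delta \leq 1$, which follows from $\delta \leq (1-\mu)/\nu \leq 1$). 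Setting $\mathcal{N}_{k,q} := AB^{k+q}(k!)^{\nu\delta}(q!)^\mu$, this reads $\|f\|_{GS_{\mathcal{N},\rho}} \leq A\, N(f)$.

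Next I would verify the hypotheses of Theorem~\ref{general_uncertaintyprinciple}. The sequence $\mathcal{N}$ is non-decreasing in each index, and its diagonal $\mathcal{M}_k = A B^{2k}(k!)^\alpha$, with $\alpha := \mu + \nu\delta \in (0,1]$, is logarithmically convex. By the Denjoy--Carleman criterion (Theorem~\ref{Den_Carl_thm}), $\mathcal{M}$ is quasi-analytic precisely when $\alpha \leq 1$, which is equivalent to $\delta \leq (1-\mu)/\nu$. Moreover, the identity $M_{j+1}M_{j-1}/M_j^2 = (1+1/j)^\alpha$ combined with the concavity of $x \mapsto x^\alpha$ yields $\gamma_{\mathcal{M}}(p) \leq \alpha \leq 1$ uniformly in $p$, hence $\Gamma_{\mathcal{M}}(p) \leq 4e^{8}$. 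Applying Theorem~\ref{general_uncertaintyprinciple} to $\eps' := \eps/A^2 \in (0, N_{0,0}^2]$ and using $\|f\|_{GS_{\mathcal{N},\rho}}^2 \leq A^2 N(f)^2$ then produces
\begin{equation*}
\|f\|_{L^2}^2 \leq C_{\eps'}\|f\|_{L^2(\omega)}^2 + \eps\, N(f)^2,
\end{equation*}
and the task reduces to estimating the Bang degree $n := n_{t_0,\mathcal{M},r}$ at the threshold $L := -\log t_0$, which satisfies $L \lesssim \log A + \log(1/\eps) + C_d$.

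Two regimes follow from the behaviour of the ratios $M_{k-1}/M_k = 1/(B^2 k^\alpha)$. In the subcritical case $\delta < (1-\mu)/\nu$, one has $\alpha < 1$, and an integral comparison applied to $\sum_{L < n \leq N} 1/(B^2 n^\alpha) < r$ yields
\begin{equation*}
n \leq 2^{\frac{1}{1-\alpha}}\bigl(L + C_{r,\alpha}\, B^{\frac{2}{1-\alpha}}\bigr) \lesssim \log(1/\eps) + A^{\frac{2}{1-\mu-\nu\delta}},
\end{equation*}
using $B \lesssim A$; substituting into $C_{\eps'} = K'(2d\,\Gamma_{\mathcal{M}}(2n)/\gamma)^{4n}$ and absorbing the subordinate $\log A$ term gives the first announced bound. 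In the critical case $\delta = (1-\mu)/\nu$ one has $\alpha = 1$; the sum is logarithmic, so the comparison gives $n \leq L\, e^{r B^2} \lesssim (1-\log\eps+\log A)\, e^{KA^2}$, which upon substitution delivers $\log C_{\eps,A} \leq K(1-\log\eps+\log A) e^{KA^2}$.

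The main technical obstacle is obtaining the sharp exponent $2/(1-\mu-\nu\delta)$ in the subcritical regime. The parameter $A$ enters the Bang degree in two different ways: the threshold $L$ contributes only a benign $\log A$, while the ratio $1/(B^2 k^\alpha)$ scales like $A^{-2}$, and solving $\int_L^N x^{-\alpha}\,dx \lesssim r$ amplifies this to $A^{2/(1-\alpha)}$. It is therefore crucial that $B$ be genuinely of order $A$ rather than $A^{1+\delta}$, which requires the inequality $A^\delta \leq A$ when controlling $\max(RC_0 A^\delta, A)$. The hypothesis $\delta \leq (1-\mu)/\nu \leq 1$ thus plays a double role: guaranteeing quasi-analyticity of $\mathcal{M}$, and producing the sharp $A$-exponent in the subcritical bound.
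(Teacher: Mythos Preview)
Your proposal is correct and follows the same route as the paper's proof: pass from $\langle x\rangle^p$ to $\rho(x)^k$ to obtain $\|f\|_{GS_{\mathcal{N},\rho}}\lesssim N(f)$ for the sequence $\mathcal{N}_{k,q}\sim (CA)^{k+q}(k!)^{\nu\delta}(q!)^\mu$, apply Theorem~\ref{general_uncertaintyprinciple}, and then estimate the Bang degree of the diagonal $(p!)^{\mu+\nu\delta}$-sequence by integral comparison in the two regimes $\mu+\nu\delta<1$ and $\mu+\nu\delta=1$ (the paper packages this last step as Lemma~\ref{ex_qa_sequence}). The only cosmetic differences are that the paper handles the fractional weight $\langle x\rangle^{\delta k}$ via H\"older interpolation (Lemma~\ref{interpolation}) rather than your ceiling-plus-Stirling argument, and that with your normalization $N_{0,0}=A$ one actually has $\|f\|_{GS_{\mathcal{N},\rho}}\leq N(f)$ directly, so the detour through $\eps'=\eps/A^2$ is harmless but unnecessary.
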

\medskip
Let us notice that the estimate \eqref{up_schwartz} is only relevant when 
\begin{equation*}
\sup_{p \in \nn, \beta \in \nn^d} \frac{\|\langle x\rangle^{p} \partial^{\beta}_x f\|_{L^2(\rr^d)}}{A^{p+|\beta|} (p!)^{\nu}(|\beta|!)^{\mu}} <+\infty,
\end{equation*}
that is, when $f \in GS_{\mathcal{N}, \tilde{\rho}}$, with $\mathcal{N}=(A^{p+q}(p!)^{\nu} (q!)^{\mu})_{(p,q) \in \nn^2}$ and $\tilde{\rho}= \langle \cdot \rangle$. The quantitative estimates given in Theorem~\ref{specific_GS_uncertaintyprinciple} are playing a key role in order to establish the following null-controllability results.

The proof of Theorem~\ref{general_uncertaintyprinciple} is given in Section~\ref{proof_mainprop}. It follows the strategy developed by Kovrijkine in \cite{Kovrijkine}, and its generalization given in Theorem~\ref{Spectral} together with a quantitative result on quasi-analytic functions which is a multidimensional version of \cite[Theorem~B]{NSV} from Nazarov, Sodin and Volberg. Regarding Theorem~\ref{specific_GS_uncertaintyprinciple}, its proof is given in Section~\ref{proof2}. It is a direct application of Theorem~\ref{general_uncertaintyprinciple} together with Lemma~\ref{ex_qa_sequence}.
Next section shows that Theorem~\ref{general_uncertaintyprinciple} also applies to more general sequences.

\subsubsection{Uncertainty principles in symmetric weighted Gelfand-Shilov spaces}\label{up_symmetric_GS}
Let $$\Theta : [0,+\infty) \longrightarrow [0,+\infty),$$ be a non-negative continuous function. We consider the following symmetric weighted Gelfand-Shilov spaces 
\begin{equation*}
GS_{\Theta}= \Big\{ f \in L^2(\rr^d): \quad \|f\|_{GS_{\Theta}} := \Big\|\big(e^{\Theta(|\alpha|)} \langle f, \Phi_{\alpha} \rangle_{L^2(\rr^d)}\big)_{\alpha \in \nn^d}\Big\|_{l^2(\nn^d)} < +\infty \Big\},
\end{equation*}
where $(\Phi_{\alpha})_{\alpha \in \nn^d}$ denotes the Hermite basis of $L^2(\rr^d)$. The definition and basic facts about Hermite functions are recalled in Section~\ref{Hermite_functions}.

Before explaining how the spaces $GS_{\Theta}$ relate to Gelfand-Shilov spaces defined in Section~\ref{gelfand}, the assumptions on the weight function $\Theta$ need to be specify further.
Let us consider the following logarithmically-convex sequence
\begin{equation}\label{lc_sequence}
\forall p \in \nn, \quad M_p= \sup_{t \geq 0} t^pe^{-\Theta(t)}.
\end{equation}
Let $s >0$. We assume that the sequence  $(M_p)_{p \in \nn}$ satisfies the following conditions:
\medskip

\text{(H1)} $\forall p \in \nn, \quad 0<M_p < +\infty$,
\medskip

\text{(H2)} There exist some positive constants $C_{\Theta}>0$, $L_{\Theta}\geq 1$ such that 
\begin{equation*}\label{H2}
\forall p \in \nn, \quad p^{p} \leq C_{\Theta} L_{\Theta}^p M_p,
\end{equation*}
with the convention $0^0=1$, 
\medskip

$\text{(H3)}_s$ The sequence $(M^s_p)_{p \in \nn}$ is quasi-analytic, that is, 
\begin{equation*}
\sum_{p=1}^{+\infty} \Big(\frac{M_{p-1}}{M_p}\Big)^s = +\infty,
\end{equation*}
according to Denjoy-Carleman Theorem.
Under these assumptions, the following Bernstein type estimates hold for the spaces $GS_{\Theta}$:
\medskip

\begin{proposition}\label{bernstein_estim1}
Let $\Theta : [0,+\infty) \longrightarrow [0,+\infty)$ be a non-negative continuous function. If the associated sequence $(M_p)_{p \in \nn}$ in \eqref{lc_sequence} satisfies the assumptions $(H1)$ and $(H2)$, then the space $GS_{\Theta}$ is included in the Schwartz space $\mathscr{S}(\rr^d)$, and for all $0< s \leq 1$, there exists a positive constant $D_{\Theta, d,s}\geq 1$ such that
\begin{multline*}
\forall f \in GS_{\Theta}, \forall r \in [0,+\infty), \forall \beta \in \nn^d,\\
\|\langle x \rangle^{r} \partial_x^{\beta} f \|_{L^2(\rr^d)} \leq (D_{\Theta,d,s})^{1+r+|\beta|} \Big(M_{\left\lfloor \frac{r+1 +|\beta|+(2-s)(d+1)}{2s} \right\rfloor +1}\Big)^s \|f\|_{GS_{\Theta}},
\end{multline*}
where $\lfloor \cdot \rfloor$ denotes the floor function.
\end{proposition}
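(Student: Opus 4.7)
My plan is to use the Hermite expansion $f = \sum_{\alpha \in \nn^d} c_\alpha \Phi_\alpha$, where $c_\alpha = \langle f, \Phi_\alpha \rangle_{L^2(\rr^d)}$ and $\|f\|_{GS_\Theta}^2 = \sum_\alpha |c_\alpha|^2 e^{2\Theta(|\alpha|)}$. By the triangle inequality followed by Cauchy--Schwarz,
\begin{equation*}
\|\langle x\rangle^r \partial_x^\beta f\|_{L^2(\rr^d)} \leq \|f\|_{GS_\Theta} \Bigl(\sum_{\alpha \in \nn^d} e^{-2\Theta(|\alpha|)} \|\langle x\rangle^r \partial_x^\beta \Phi_\alpha\|_{L^2(\rr^d)}^2\Bigr)^{1/2},
\end{equation*}
so the entire proposition reduces to controlling the weighted series on the right in terms of the sequence $(M_p)_{p \in \nn}$.

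For the individual Hermite functions I would invoke the action of the creation and annihilation operators $a_j = \tfrac{1}{\sqrt 2}(x_j + \partial_{x_j})$ and $a_j^* = \tfrac{1}{\sqrt 2}(x_j - \partial_{x_j})$, which satisfy $a_j \Phi_\alpha = \sqrt{\alpha_j}\,\Phi_{\alpha - e_j}$ and $a_j^* \Phi_\alpha = \sqrt{\alpha_j+1}\,\Phi_{\alpha+e_j}$. For non-integer $r$, first dominating $\langle x\rangle^r \leq \langle x\rangle^{2\lceil r/2\rceil}$ (valid since $\langle x\rangle \geq 1$) and then expanding $(1+|x|^2)^{\lceil r/2\rceil}$ multinomially in the $x_j^2$ would yield an estimate of the shape $\|\langle x\rangle^r \partial_x^\beta \Phi_\alpha\|_{L^2(\rr^d)} \leq C_d^{1+r+|\beta|} (1+|\alpha|)^{(r+|\beta|)/2 + k(d)}$ for some dimensional constants $C_d \geq 1$ and $k(d) \geq 0$. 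To pass from the sum over $\alpha \in \nn^d$ to a supremum over $t \geq 0$, I would factor out $(1+|\alpha|)^{-(d+1)}$: since $\#\{\alpha \in \nn^d : |\alpha| = n\} = O((n+1)^{d-1})$, the series $\sum_\alpha (1+|\alpha|)^{-(d+1)}$ converges to a constant depending only on $d$, so that
\begin{equation*}
\sum_\alpha e^{-2\Theta(|\alpha|)} (1+|\alpha|)^{r+|\beta|+2k(d)} \leq C_d \sup_{t \geq 0} \langle t\rangle^{r+|\beta|+2k(d)+d+1} e^{-2\Theta(t)}.
\end{equation*}

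The exponent $s$ in the final bound then arises from an $s$-adapted splitting of the weight: for $A \geq 0$ and $s \in (0,1]$,
\begin{equation*}
\langle t\rangle^A e^{-2\Theta(t)} = \bigl[\langle t\rangle^{A/(2s)} e^{-\Theta(t)}\bigr]^{2s} \cdot e^{-2(1-s)\Theta(t)} \leq \bigl[\langle t\rangle^{A/(2s)} e^{-\Theta(t)}\bigr]^{2s},
\end{equation*}
using $\Theta \geq 0$ and $s \leq 1$. Taking the supremum in $t$, choosing $p = \lfloor A/(2s)\rfloor + 1$ and crudely dominating $\langle t\rangle^{A/(2s)} \leq C\,(1+t^p)$, the definition of $M_p$ together with (H1) give $\sup_t \langle t\rangle^A e^{-2\Theta(t)} \leq D_{\Theta,d,s}^A (M_p)^{2s}$. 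Substituting $A = r+|\beta|+2k(d)+d+1$, taking square roots and arithmetically matching $A/(2s)$ against the form $(r+1+|\beta|+(2-s)(d+1))/(2s)$ would produce the stated $(M_p)^s$ factor with the exact index $\lfloor(r+1+|\beta|+(2-s)(d+1))/(2s)\rfloor+1$. The inclusion $GS_\Theta \subset \mathscr{S}(\rr^d)$ then follows from these Bernstein estimates combined with standard Sobolev embedding, since (H1) ensures that each weighted $L^2$ Sobolev norm of $f$ is controlled by $\|f\|_{GS_\Theta}$. The main obstacle is the quantitative bookkeeping required to land on the precise floor expression in the statement: the dimensional constants $k(d)$ and $d+1$ coming from the Hermite estimates and the summability reduction must combine with the $(2-s)$ factor produced by the $s$-adapted splitting in exactly the way the statement prescribes, and non-integer exponents in $M_p$ must be handled carefully via the log-convexity of $(M_p)_{p \in \nn}$ built into the definition \eqref{lc_sequence}.
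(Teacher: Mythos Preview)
Your overall strategy---expand in the Hermite basis, control individual Hermite functions, reduce the weighted series to a supremum bounded by $M_p^s$---matches the paper's, but there is a genuine gap in your single--Hermite--function estimate. You claim
\[
\|\langle x\rangle^r \partial_x^\beta \Phi_\alpha\|_{L^2(\rr^d)} \leq C_d^{\,1+r+|\beta|}(1+|\alpha|)^{(r+|\beta|)/2+k(d)},
\]
but this is false for small $|\alpha|$: already with $\alpha=0$ one has $\|\partial_x^\beta \Phi_0\|_{L^2}=\sqrt{\beta!}$, which no bound of the form $C_d^{|\beta|}$ can control. The creation/annihilation calculus actually gives
\[
\|x^\gamma \partial_x^\beta \Phi_\alpha\|_{L^2} \leq C^{|\gamma|+|\beta|}\bigl(1+|\alpha|+|\gamma|+|\beta|\bigr)^{(|\gamma|+|\beta|)/2},
\]
and the residual factor $(r+|\beta|)^{(r+|\beta|)/2}$ appearing for low Hermite indices is exactly what forces the use of assumption (H2), which you never invoke.

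The paper handles this by splitting $\pi_N f = \pi_{|\alpha|+|\beta|} f + (\pi_N - \pi_{|\alpha|+|\beta|})f$ (here $\alpha,\beta$ are the weight and derivative indices). For the high modes $|\gamma|>|\alpha|+|\beta|$ the bound $(1+|\gamma|)^{(|\alpha|+|\beta|)/2}$ is legitimate and a reduction of your type works; the paper uses an $s$-dependent H\"older step, writing $|c_\gamma|=|c_\gamma|^{1-s}\,(|c_\gamma|e^{\Theta(|\gamma|)})^s e^{-s\Theta(|\gamma|)}$ and pairing with a summability weight $|\gamma|^{-(2-s)(d+1)/2}$, which is where the specific offset $(2-s)(d+1)$ in the floor comes from---your pointwise splitting $\langle t\rangle^A e^{-2\Theta(t)}\leq (\langle t\rangle^{A/(2s)}e^{-\Theta(t)})^{2s}$ produces a different, $s$-independent offset. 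For the low-mode piece the bound is $(|\alpha|+|\beta|)^{(|\alpha|+|\beta|)/2}\|f\|_{L^2}$, and (H2) is what converts $p^{p/2}$ into $C^p (M_{\lfloor p/(2s)\rfloor+1})^s$. There is also a secondary gap: your triangle inequality on the Hermite series presupposes that $\sum_\alpha c_\alpha\,\langle x\rangle^r\partial_x^\beta\Phi_\alpha$ converges in $L^2$, which is precisely the conclusion; the paper avoids this by proving the estimate uniformly for the finite truncations $\pi_N f$ and then passing to the limit via weak compactness in $L^2$.
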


\medskip
\begin{remark} Let us notice that Proposition~\ref{bernstein_estim1} implies in particular the inclusion of spaces $GS_{\Theta} \subset GS_{\mathcal{N}, \rho_s}$, when $\frac{1}{2} \leq s \leq 1$, with 
$$\forall x \in \rr^d, \quad \rho_s(x)= \langle x \rangle^{2s-1}$$
and 
$$\mathcal{N}= \Big((D_{\Theta,d, s}^{(2s-1)p+q+1} M_{\left\lfloor \frac{(2s-1)p+1 +q+(2-s)(d+1)}{2s} \right\rfloor +1}^s \Big)_{(p,q) \in \nn^2},$$
and the following estimates
\begin{equation*}
\forall f \in GS_{\Theta}, \quad \|f\|_{GS_{\mathcal{N},\rho_s}} \leq \|f \|_{GS_{\Theta}}.
\end{equation*}
\end{remark}
The proof of Proposition~\ref{bernstein_estim1} is given in Appendix (Section~\ref{appendix}).
In order to derive uncertainty principles for functions with weighted Hermite expansions, the sequence $\mathcal{M}$ has in addition to satisfy the assumption $(H3)_s$ for some $\frac{1}{2} \leq s \leq 1$. 

%

The quantitative estimates in Proposition~\ref{bernstein_estim1} together with the uncertainty principles given by Theorem~\ref{general_uncertaintyprinciple} allow us to establish the following estimates:

\medskip

\begin{theorem}\label{uncertainty_principle}
Let $0 \leq \delta \leq 1$ and $\Theta : [0,+\infty) \longrightarrow [0,+\infty)$ be a non-negative continuous function. Let us assume that the associated sequence $(M_p)_{p \in \nn}$ in \eqref{lc_sequence} satisfies the assumptions $\text{(H1)}$, $\text{(H2)}$ and $\text{(H3)}_{\frac{1+\delta}{2}}$. Let $\rho : \rr^d \longrightarrow (0,+\infty)$ be a positive contraction mapping satisfying 
\begin{equation*}
\exists m>0, \exists R>0, \forall x \in \rr^d, \quad 0<m \leq \rho(x) \leq R \left\langle x \right\rangle^{\delta}.
\end{equation*}
If $\omega$ is a measurable subset of $\rr^d$ thick with respect to $\rho$, then there exists a positive constant $\eps_0=\eps_0(\Theta, d, \delta)>0$ such that for all $0<\eps \leq \eps_0$, there exists a positive constant $D_{\eps}=D(d, \Theta, \eps, \delta, \rho)>0$ so that
\begin{equation}\label{uncertainty_principle_sym}
\forall f \in GS_{\Theta}, \quad \|f\|^2_{L^2(\rr^d)} \leq D_{\eps} \|f\|^2_{L^2(\omega)}+ \eps \|f\|_{GS_{\Theta}}^2 .
\end{equation}
\end{theorem}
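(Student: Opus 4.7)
The strategy is to reduce to Theorem~\ref{general_uncertaintyprinciple} by invoking the Bernstein-type estimates of Proposition~\ref{bernstein_estim1} to realize $GS_\Theta$ as a continuously embedded subspace of $GS_{\hat{\mathcal N}, \rho}$ for a suitable double sequence $\hat{\mathcal N}$.

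First, choose $s = (1+\delta)/2 \in [\tfrac12, 1]$, so that $2s - 1 = \delta$. The Remark following Proposition~\ref{bernstein_estim1} provides the inclusion $GS_\Theta \subset GS_{\mathcal N, \rho_s}$ with $\rho_s(x) = \langle x \rangle^\delta$ and
$$\mathcal N_{p,q} = D^{\delta p + q + 1}\, M_{\lfloor (\delta p + q + 1 + (2-s)(d+1))/(1+\delta)\rfloor + 1}^s,$$
where $D = D_{\Theta, d, s}$. Since $\rho(x) \leq R \langle x \rangle^\delta = R \rho_s(x)$ pointwise, we deduce, for every $f \in GS_\Theta$, $k \in \nn$, $\beta \in \nn^d$,
$$\|\rho^k \partial_x^\beta f\|_{L^2(\rr^d)} \leq R^k \|\rho_s^k \partial_x^\beta f\|_{L^2(\rr^d)} \leq \hat N_{k, |\beta|}\, \|f\|_{GS_\Theta},$$
with $\hat N_{p,q} := R^p\, \mathcal N_{p,q}$. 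This yields the embedding $GS_\Theta \subset GS_{\hat{\mathcal N}, \rho}$ together with the seminorm control $\|f\|_{GS_{\hat{\mathcal N}, \rho}} \leq \|f\|_{GS_\Theta}$.

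Second, verify that $\hat{\mathcal N}$ and $\rho$ satisfy the hypotheses of Theorem~\ref{general_uncertaintyprinciple}. The density $\rho$ is a contraction mapping satisfying $m \leq \rho \leq R \langle x \rangle^\delta \leq R \langle x \rangle$ (using $\delta \leq 1$), as required. Non-decreasingness of $\hat{\mathcal N}$ in its two indexes follows from the monotonicity in $(p, q)$ of the floor-function index together with the monotonicity of $(M_j)_{j \in \nn}$, which can be assumed without loss of generality by replacing $M_j$ with $\max_{i \leq j} M_i$; this modification preserves log-convexity and the Carleman sum. The diagonal $\mathcal M' = (\hat N_{p,p})_{p \in \nn}$ deserves more care. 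Writing $c = 1 + (2-s)(d+1)$, the identity $\delta p + p = (1+\delta) p$ gives
$$\Big\lfloor \frac{(1+\delta) p + c}{1 + \delta}\Big\rfloor + 1 = p + c_0, \qquad c_0 := \Big\lfloor \frac{c}{1 + \delta}\Big\rfloor + 1,$$
and consequently $\hat N_{p,p} = D \cdot (R D^{1+\delta})^p\, M_{p + c_0}^s$. Log-convexity of $\mathcal M'$ then follows from log-convexity of $(M_p)_{p \in \nn}$ (built into the definition~\eqref{lc_sequence}) and the preservation of log-convexity under shifts and geometric multiplications. Quasi-analyticity via Denjoy-Carleman reduces to the divergence of
$$\sum_{p \geq 1} \frac{\hat N_{p-1, p-1}}{\hat N_{p, p}} = \frac{1}{R D^{1+\delta}} \sum_{p \geq 1} \Big(\frac{M_{p + c_0 - 1}}{M_{p + c_0}}\Big)^s,$$
which follows from assumption $\text{(H3)}_{(1+\delta)/2}$ by a trivial shift of summation index.

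Finally, apply Theorem~\ref{general_uncertaintyprinciple} to the sequence $\hat{\mathcal N}$ and the density $\rho$. Setting $\eps_0 := \hat N_{0,0}^2 > 0$, this produces, for every $0 < \eps \leq \eps_0$, a positive constant $D_\eps = D(d, \Theta, \eps, \delta, \rho)$ such that
$$\|f\|^2_{L^2(\rr^d)} \leq D_\eps \|f\|^2_{L^2(\omega)} + \eps\, \|f\|^2_{GS_{\hat{\mathcal N}, \rho}} \leq D_\eps \|f\|^2_{L^2(\omega)} + \eps\, \|f\|^2_{GS_\Theta},$$
the second inequality being the seminorm control from the first step. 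The main technical obstacle is the index-tracking in the second step, in particular checking that the floor function along the diagonal simplifies to the exact shift $p \mapsto p + c_0$ and that $\text{(H3)}_{(1+\delta)/2}$ cleanly delivers the Denjoy-Carleman divergence for $\mathcal M'$; once this bookkeeping is in place, every other step is a direct application of the already-established results.
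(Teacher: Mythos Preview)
Your approach is essentially the same as the paper's: embed $GS_\Theta$ into an appropriate $GS_{\hat{\mathcal N},\rho}$ via the Bernstein estimates of Proposition~\ref{bernstein_estim1}, then verify the hypotheses of Theorem~\ref{general_uncertaintyprinciple}. The paper makes the sequence monotone by passing to $(M_0/M_1)^p M_p$ rather than your running maximum, and simplifies the floor index slightly differently, but the skeleton is identical and your diagonal computation $\lfloor((1+\delta)p+c)/(1+\delta)\rfloor+1=p+c_0$ is clean.

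There is one small but genuine slip: with $\hat N_{p,q}=R^p\mathcal N_{p,q}$ the sequence need not be non-decreasing in $p$ when $R<1$. Indeed if $\delta=0$ the floor index is independent of $p$ and $D^{\delta p}=1$, so $\hat N_{p,q}=R^p D^{q+1}\tilde M_{\cdot}^s$ is strictly decreasing in $p$, and Theorem~\ref{general_uncertaintyprinciple} cannot be invoked. The fix is immediate (and is what the paper does): replace $R^p$ by $\max(R,1)^p$, which still gives a valid upper bound since $\rho(x)\le R\langle x\rangle^\delta\le\max(R,1)\langle x\rangle^\delta$. Your claim that $\tilde M_j=\max_{i\le j}M_i$ preserves log-convexity is correct but not entirely obvious; you rely on the fact that a log-convex sequence with $M_p\to\infty$ (guaranteed by (H2)) is unimodal, so the modification only flattens an initial segment to the value $M_0$---it would be worth saying this explicitly.
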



\medskip

The above result provides some uncertainty principles for functions with weighted Hermite expansions. Its proof is given in Section~\ref{thm_hermite_proof}. Let us point out that it is possible to obtain quantitative estimates on the constant $D_{\eps}$ thanks to the ones in Theorem~\ref{general_uncertaintyprinciple}, and to recover the spectral inequalities for finite combinations of Hermite functions established in \cite{MP} with a constant growing at the same rate with respect to $N$. Indeed, by taking $\Theta(t)=t$ on $[0,+\infty)$, we readily compute that
\begin{equation*}
\forall p \in \nn, \quad M_p= \sup_{t \geq 0} t^p e^{-t} = \Big( \frac{p}{e} \Big)^p
\end{equation*}
and the Stirling's formula provides
\begin{equation*}
M_p \underset{p \to +\infty}{\sim} \frac{p!}{\sqrt{2\pi p}}.
\end{equation*}
It follows that the assumptions $\text{(H1)}$, $\text{(H2)}$ and $\text{(H3)}_1$ are satisfied.
By noticing that
\begin{equation*}
\|f\|^2_{GS_{\Theta}} = \sum_{|\alpha| \leq N} e^{2|\alpha|} |\langle f, \Phi_{\alpha} \rangle_{L^2(\rr^d)} |^2 \leq e^{2N} \|f\|^2_{L^2(\rr^d)},
\end{equation*}
when $N \in \nn$ and $f=\sum_{|\alpha| \leq N} \langle f, \Phi_{\alpha} \rangle \Phi_{\alpha}$, we deduce from \eqref{uncertainty_principle_sym} while taking $\eps = \frac{1}{2} e^{-2N}$ that 
\begin{equation*}
\forall N \in \nn, \forall f \in \mathcal{E}_N, \quad \|f\|^2_{L^2(\rr^d)} \leq 2 D_{\frac{1}{2} e^{-2N}} \|f\|^2_{L^2(\omega)},
\end{equation*}
where $\mathcal{E}_N= \textrm{Span}_{\cc}\big\{\Phi_{\alpha}\big\}_{\alpha \in \nn^d, \, |\alpha|\leq N}$.

We end this section by providing some examples of functions $\Theta$, which define a sequence $\mathcal{M}=(M_p)_{p\in \nn}$ satisfying hypotheses $\text{(H1)}$, $\text{(H2)}$ and $\text{(H3)}_{s}$ for some $\frac{1}{2}\leq s \leq 1$. In \cite[Proposition~4.7]{AlphonseMartin}, Alphonse and the author devise the following examples in the case $s=1$:
\medskip
\begin{proposition}[\cite{AlphonseMartin}, Alphonse \& Martin]\label{ex_theta1}
Let $k\geq1$ be a positive integer and $\Theta_{k,1} : [0,+\infty)\rightarrow[0,+\infty)$ be the non-negative function defined for all $t\geq0$ by 
$$\Theta_{k,1}(t) = \frac t{g(t)(g\circ g)(t)... g^{\circ k}(t)},\quad\text{where}\quad g(t) = \log(e+t),$$
with $g^{\circ k} = g\circ\ldots\circ g$ ($k$ compositions). The associated sequence $\mathcal M^{\Theta_{k,1}}=(M^{\Theta_{k,1}}_p)_{p \in \nn}$ defined in \eqref{lc_sequence} is a quasi-analytic sequence of positive real numbers.
\end{proposition}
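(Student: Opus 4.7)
The plan is to apply the Denjoy-Carleman criterion (Theorem~\ref{Den_Carl_thm}) to the sequence $\mathcal{M}^{\Theta_{k,1}} = (M_p)_{p \in \nn}$ defined by $M_p = \sup_{t \geq 0} t^p e^{-\Theta_{k,1}(t)}$. I would first dispatch the preliminary properties: logarithmic convexity is automatic since $\log M_p = \sup_{t>0}\bigl[p \log t - \Theta_{k,1}(t)\bigr]$ is a supremum of affine functions of $p$, hence convex in $p$. Positivity and finiteness of each $M_p$ follow from the crude bound $\Theta_{k,1}(t) \geq t/(\log(e+t))^{k+1}$ for $t$ large (obtained by iterating $g^{\circ j}(t) \leq g(t)$), which guarantees that $t^p e^{-\Theta_{k,1}(t)} \to 0$ as $t \to +\infty$, so that the continuous positive function attains its maximum at some interior point $t_p > 0$ for every $p \geq 1$.

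The core task is to establish the divergence $\sum_p M_{p-1}/M_p = +\infty$. The key elementary estimate is obtained by evaluating the function defining $M_{p-1}$ at the maximizer $t_p$ for $M_p$, yielding
\begin{equation*}
M_{p-1} \;\geq\; t_p^{p-1}\, e^{-\Theta_{k,1}(t_p)} \;=\; \frac{1}{t_p}\cdot t_p^p\, e^{-\Theta_{k,1}(t_p)} \;=\; \frac{M_p}{t_p},
\end{equation*}
whence $M_{p-1}/M_p \geq 1/t_p$. It therefore suffices to upper-bound $t_p$ by $C\,pL(p)$, where $L(t) = g(t) g^{\circ 2}(t) \cdots g^{\circ k}(t)$, for then the classical Bertrand-type divergence
\begin{equation*}
\sum_{p \geq 2} \frac{1}{p\, g(p) g^{\circ 2}(p) \cdots g^{\circ k}(p)} \;=\; +\infty
\end{equation*}
(proved by iterated substitutions in the comparison integral $\int dt/(tL(t))$) closes the argument.

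To control $t_p$, I plan to invoke the first-order optimality condition $p = t_p \Theta_{k,1}'(t_p)$. Writing $\Theta_{k,1}(t) = t/L(t)$ and using $g'(s) = (e+s)^{-1}$, a direct computation gives
\begin{equation*}
\frac{tL'(t)}{L(t)} \;=\; \sum_{j=1}^k \frac{t}{e+t} \prod_{i=1}^{j-1} \frac{1}{e+g^{\circ i}(t)}\cdot \frac{1}{g^{\circ j}(t)} \;=\; O\Big(\frac{1}{g(t)}\Big) \;=\; o(1)
\end{equation*}
as $t \to +\infty$, so $t\Theta_{k,1}'(t) = \Theta_{k,1}(t)\bigl(1 - tL'(t)/L(t)\bigr) \sim t/L(t)$. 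The optimality condition thus becomes $t_p \sim pL(t_p)$ for $p$ large (in particular $t_p \to +\infty$, since $\Theta_{k,1}'$ is bounded). Since $\log L(t) = O(\log\log(e+t))$ varies much more slowly than $\log t$, taking logarithms gives $\log t_p \sim \log p$, whence $g^{\circ j}(t_p) \sim g^{\circ j}(p)$ for each $j$, $L(t_p) \sim L(p)$, and therefore $t_p \sim pL(p)$.

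The main technical hurdle is the quantitative tracking of the slowly-varying correction $tL'(t)/L(t)$ through the $k$ iterated logarithms in $L$, and the inversion of the implicit equation $t_p \sim pL(t_p)$. Since only a one-sided upper bound $t_p \leq CpL(p)$ is actually required, one may alternatively avoid fine saddle-point asymptotics by directly checking that $t \mapsto t^p e^{-\Theta_{k,1}(t)}$ is decreasing beyond a threshold of order $pL(p)$, which reduces to estimating $\Theta_{k,1}(CpL(p))$ from below by $p$.
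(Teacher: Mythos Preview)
The paper does not actually prove Proposition~\ref{ex_theta1}: it is quoted verbatim from \cite[Proposition~4.7]{AlphonseMartin} and no argument is given here, so there is no ``paper's own proof'' to compare against. Your proposal must therefore be assessed on its own merits.

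Your strategy is the standard and correct one for this type of statement. Logarithmic convexity via the Legendre-transform representation, finiteness of $M_p$ from the crude lower bound on $\Theta_{k,1}$, the inequality $M_{p-1}/M_p \geq 1/t_p$ obtained by evaluating at the maximizer, the first-order condition $p = t_p\Theta_{k,1}'(t_p)$, the computation $t\Theta_{k,1}'(t) \sim t/L(t)$ coming from $tL'(t)/L(t) = O(1/g(t))$, the inversion $t_p \sim pL(p)$ via $\log t_p \sim \log p$, and finally the Bertrand divergence $\sum_p 1/(pL(p)) = +\infty$: each step is correct and the chain closes. Two cosmetic remarks: in your crude bound you wrote $\Theta_{k,1}(t) \geq t/(\log(e+t))^{k+1}$, whereas the product $L$ has only $k$ factors, so the sharper exponent $k$ is available (your weaker bound is of course still sufficient for finiteness of $M_p$); and the alternative you sketch at the end---checking directly that $\Theta_{k,1}(CpL(p)) \geq p$ for a suitable constant $C$---is indeed cleaner than the full saddle-point inversion, since you only need the one-sided bound $t_p \leq CpL(p)$.
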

\medskip
Let us notice that the assumption $\text{(H2)}$ is satisfied as
\begin{equation*}
\forall k \geq 1, \forall p \in \nn, \quad M^{\Theta_{k,1}}_p= \sup_{t \geq 0} t^p e^{-\Theta_{k,1}(t)} \geq \sup_{t \geq 0} t^p e^{-t} = \Big(\frac{p}{e}\Big)^p.
\end{equation*}
Proposition~\ref{ex_theta1} allows to provide some examples for the cases $\frac{1}{2} \leq s \leq 1$:
\medskip
\begin{proposition}\label{ex_qa_bertrand}
Let $k\geq1$ be a positive integer, $\frac{1}{2} \leq s \leq 1$ and $\Theta_{k,s} : [0,+\infty)\rightarrow[0,+\infty)$ be the non-negative function defined for all $t\geq0$ by 
$$\Theta_{k,s}(t) = \frac {t^s}{g(t)(g\circ g)(t)... g^{\circ k}(t)},\quad\text{where}\quad g(t) = \log(e+t),$$
with $g^{\circ k} = g\circ\ldots\circ g$ ($k$ compositions). The associated sequence $\mathcal M^{\Theta_{k,s}}=(M^{\Theta_{k,s}}_p)_{p \in \nn}$ defined in \eqref{lc_sequence} satisfies the assumptions $\text{(H1)}$, $\text{(H2)}$ and $\text{(H3)}_{s}$.
\end{proposition}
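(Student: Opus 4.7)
The plan is to verify each of $(H1)$, $(H2)$ and $(H3)_s$ in turn, the first two being straightforward and the third containing the core of the argument. Throughout, I write $L(t) = g(t)(g\circ g)(t)\cdots g^{\circ k}(t)$ so that $\Theta_{k,s}(t)= t^s/L(t)$, and set $M_p := M_p^{\Theta_{k,s}}$. The function $L$ is positive, $C^1$ and strictly increasing, and is slowly varying in the sense that $tL'(t)/L(t) = O(1/\log t)$ as $t\to+\infty$ (by induction on $k$ using $g'(t)=1/(e+t)$) and $L(\lambda t)/L(t)\to 1$ for every fixed $\lambda >0$.

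For $(H1)$, the map $\phi_p(t) = t^p e^{-\Theta_{k,s}(t)}$ is continuous and positive on $(0,+\infty)$, vanishes at $0$ when $p\geq 1$ and tends to $0$ as $t\to+\infty$ since $t^s$ outgrows $L(t)\log t$, so the supremum is attained and $0<M_p<+\infty$. For $(H2)$, each $g^{\circ j}\geq 1$ gives $L(t)\geq 1$, whence $\Theta_{k,s}(t)\leq t^s$; evaluating $\phi_p$ at $t=p^{1/s}$ yields $M_p\geq p^{p/s}e^{-p}$, and the constraint $0<s\leq 1$ then provides $p^p \leq p^{p/s} \leq e^pM_p$, so that $(H2)$ holds with $C_\Theta = 1$ and $L_\Theta = e$.

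For $(H3)_s$, let $t_p>0$ be a maximizer so that $M_p=\phi_p(t_p)$. Evaluating the supremum defining $M_{p-1}$ at $t=t_p$ gives the elementary key estimate
\begin{equation*}
M_{p-1}\geq t_p^{p-1}e^{-\Theta_{k,s}(t_p)} = \frac{M_p}{t_p}, \qquad\text{hence}\qquad \frac{M_{p-1}}{M_p}\geq \frac{1}{t_p}.
\end{equation*}
The critical point equation $\phi_p'(t_p)=0$ reads $t_p\Theta_{k,s}'(t_p)=p$, and a direct computation produces
\begin{equation*}
t\Theta_{k,s}'(t) = \frac{st^s}{L(t)}\bigg(1-\frac{tL'(t)}{sL(t)}\bigg),
\end{equation*}
whose parenthesis tends to $1$ by slow variation. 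For $p$ large this forces $st_p^s/L(t_p) \leq 2p$, and a brief bootstrap exploiting $L(\lambda t)/L(t)\to 1$ (the only nontrivially-scaled factor being $g$ itself) gives $t_p \leq C_1(pL(p))^{1/s}$ for some $C_1>0$ and all large $p$; combined with the estimate above, this yields
\begin{equation*}
\Big(\frac{M_{p-1}}{M_p}\Big)^s\geq \frac{1}{C_1^s\,p\, L(p)}.
\end{equation*}

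The conclusion then comes from the integral test: by the chain rule $(g^{\circ(k+1)})'(t) = g'(t)g'(g(t))\cdots g'(g^{\circ k}(t)) \sim 1/(tL(t))$ as $t\to+\infty$, so $g^{\circ(k+1)}$ is essentially an antiderivative of $1/(tL(t))$ and hence the critical Bertrand series $\sum_{p\geq 2} 1/(pL(p))$ diverges. Therefore $\sum_p (M_{p-1}/M_p)^s = +\infty$, proving $(H3)_s$. The main difficulty of the argument will be securing the self-consistent upper bound $t_p = O((pL(p))^{1/s})$: handling the slowly varying factor $L(t_p)$, which itself depends on the quantity being bounded, requires some care, but once this is done the proof reduces to the one-line comparison $M_{p-1}/M_p\geq 1/t_p$ and Bertrand's divergence.
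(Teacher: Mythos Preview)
Your proof is correct, but your route to $(H3)_s$ differs genuinely from the paper's.

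For $(H1)$ and $(H2)$ you argue essentially as the paper does (your estimate $\Theta_{k,s}(t)\leq t^s$ is in fact slightly sharper than the paper's $\Theta_{k,s}(t)\leq t+1$, but both suffice).

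For $(H3)_s$, the paper does not analyze the maximizer $t_p$. Instead it reduces to the case $s=1$, which is the content of the cited Proposition~\ref{ex_theta1} from~\cite{AlphonseMartin}. The reduction goes through the asymptotic $\Theta_{k,s}(t)\sim s\,\Theta_{k,1}(t^s)$ as $t\to+\infty$ (both sides being $\sim t^s/L(t)$ after using $g(t^s)\sim s\,g(t)$ and $g^{\circ j}(t^s)\sim g^{\circ j}(t)$ for $j\geq 2$), which yields a constant $C_{k,s}$ with $\Theta_{k,s}(t)+C_{k,s}\geq s\,\Theta_{k,1}(t^s)$; the substitution $t\mapsto t^s$ then gives $\big(M_p^{\Theta_{k,s}}\big)^s\leq e^{sC_{k,s}}\,M_p^{s^2\Theta_{k,1}}$, and quasi-analyticity of $\big((M_p^{\Theta_{k,s}})^s\big)_p$ follows from two short preservation lemmas (domination by a quasi-analytic sequence, and invariance under $\Theta\mapsto T\Theta+c$).

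Your approach is more self-contained: it does not invoke the external $s=1$ result, and it makes the underlying Bertrand divergence $\sum 1/(pL(p))=+\infty$ explicit via your antiderivative observation $(g^{\circ(k+1)})'\sim 1/(tL(t))$. The price you pay is the bootstrap bound $t_p\leq C_1(pL(p))^{1/s}$, which (as you note) requires a little care since the slow-variation property $L(\lambda t)/L(t)\to 1$ alone does not directly control $L(t_p)$; one needs the slightly stronger fact that $L(p^\alpha)\leq C\,L(p)$ for fixed $\alpha>0$, coming from $g(p^\alpha)\sim\alpha\,g(p)$ together with $g^{\circ j}(p^\alpha)/g^{\circ j}(p)\to 1$ for $j\geq 2$. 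This is exactly the same asymptotic input the paper uses in its reduction, so the two arguments ultimately rest on the same structural fact about the iterated logarithms, packaged differently.
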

\medskip
The proof of Proposition~\ref{ex_qa_bertrand} is given in Section~\ref{qa_section}.
\subsection{Applications to the null-controllability of evolution equations}\label{null_controllability_results}
This section is devoted to state some null-controllability results for evolution equations whose adjoint systems enjoy Gelfand-Shilov smoothing effects. Before presenting these results, let us recall the definitions and classical facts about controllability.
The notion of null-controllability is defined as follows:

\medskip

\begin{definition} [Null-controllability] Let $P$ be a closed operator on $L^2(\rr^d)$, which is the infinitesimal generator of a strongly continuous semigroup $(e^{-tP})_{t \geq 0}$ on $L^2(\rr^d)$, $T>0$ and $\omega$ be a measurable subset of $\mathbb{R}^d$. 
The evolution equation 
\begin{equation}\label{syst_general}
\left\lbrace \begin{array}{ll}
(\partial_t + P)f(t,x)=u(t,x)\un_{\omega}(x), \quad &  x \in \mathbb{R}^d,\ t>0, \\
f|_{t=0}=f_0 \in L^2(\rr^d),                                       &  
\end{array}\right.
\end{equation}
is said to be {\em null-controllable from the set $\omega$ in time} $T>0$ if, for any initial datum $f_0 \in L^{2}(\mathbb{R}^d)$, there exists a control function $u \in L^2((0,T)\times\mathbb{R}^d)$ supported in $(0,T)\times\omega$, such that the mild \emph{(}or semigroup\emph{)} solution of \eqref{syst_general} satisfies $f(T,\cdot)=0$.
\end{definition}

\medskip

By the Hilbert Uniqueness Method, see \cite{coron_book} (Theorem~2.44) or \cite{JLL_book}, the null-controllability of the evolution equation \eqref{syst_general} is equivalent to the observability of the adjoint system 
\begin{equation} \label{adj_general}
\left\lbrace \begin{array}{ll}
(\partial_t + P^*)g(t,x)=0, \quad & x \in \mathbb{R}^d, \ t>0, \\
g|_{t=0}=g_0 \in L^2(\rr^d),
\end{array}\right.
\end{equation}
where $P^*$ denotes the $L^2(\rr^d)$-adjoint of $P$. 
The notion of observability is defined as follows:

\medskip

\begin{definition} [Observability] Let $T>0$ and $\omega$ be a measurable subset of $\mathbb{R}^d$. 
The evolution equation \eqref{adj_general} is said to be {\em observable from the set $\omega$ in time} $T>0$, if there exists a positive constant $C_T>0$ such that,
for any initial datum $g_0 \in L^{2}(\mathbb{R}^d)$, the mild \emph{(}or semigroup\emph{)} solution of \eqref{adj_general} satisfies
\begin{equation*}\label{eq:observability}
\int\limits_{\mathbb{R}^d} |g(T,x)|^{2} dx  \leq C_T \int\limits_{0}^{T} \Big(\int\limits_{\omega} |g(t,x)|^{2} dx\Big) dt\,.
\end{equation*}
\end{definition}

\medskip
In the following, we shall always derive null-controllability results from observability estimates on adjoint systems.

%

\subsubsection{Null-controllability of evolution equations whose adjoint systems enjoy non symmetric Gelfand-Shilov smoothing effects}\label{null_controllability_non_symmetric_GS}
In this section, we aim at establishing null-controllability results for evolution equations whose adjoint systems enjoy Gelfand-Shilov smoothing effects. We consider $A$ a closed operator on $L^2(\rr^d)$, that is the infinitesimal generator of a strongly continuous contraction semigroup $(e^{-tA})_{t \geq 0}$ on $L^2(\rr^d)$, that is satisfying $$\forall t \geq 0, \forall f \in L^2(\rr^d), \quad \|e^{-tA}f\|_{L^2(\rr^d)} \leq \|f\|_{L^2(\rr^d)},$$ and study the evolution equation
\begin{equation}\label{PDEgeneral}
\left\lbrace \begin{array}{ll}
\partial_tf(t,x) + Af(t,x)=u(t,x)\un_{\omega}(x), \quad &  x \in \mathbb{R}^d, \ t>0, \\
f|_{t=0}=f_0 \in L^2(\rr^d).                                       &  
\end{array}\right.
\end{equation}
We assume that the semigroup $(e^{-tA^*})_{t \geq 0}$ generated by the $L^2(\rr^d)$-adjoint operator $A^*$, enjoys some Gelfand-Shilov smoothing effects for any positive time, that is,
\begin{equation*}\label{GS0}
\forall t >0, \forall f \in L^2(\rr^d), \quad e^{-tA^*}f \in S^{\mu}_{\nu} (\rr^d),
\end{equation*}
for some $\mu, \nu >0$ satisfying $\mu +\nu \geq 1$.
More precisely, we assume that the following quantitative regularizing estimates hold: there exist some constants $C \geq 1$, $r_1>0$, $r_2\geq0$, $0<t_0 \leq 1$ such that 
\begin{multline}\label{GS_estimate}
\forall 0< t \leq t_0, \forall \alpha, \beta \in \nn^d, \forall f \in L^2(\rr^d), \\ 
\|x^{\alpha} \partial^{\beta}_x (e^{-tA^*} f) \|_{L^2(\rr^d)} \leq \frac{C^{1+|\alpha|+|\beta|}}{t^{r_1(|\alpha|+|\beta|)+r_2}} (\alpha!)^{\nu} (\beta !)^{\mu} \|f\|_{L^2(\rr^d)},
\end{multline}
where $\alpha!=\alpha_1!...\alpha_d!$ if $\alpha=(\alpha_1,...,\alpha_d) \in \nn^d$.
In a recent work \cite{Alphonse}, Alphonse studies the smoothing effects of semigroups generated by anisotropic Shubin operators 
\begin{equation*}
\mathcal{H}_{m,k} = (-\Delta_x)^m+|x|^{2k},
\end{equation*}
equipped with domains
\begin{equation*}
D(\mathcal{H}_{m,k})= \left\{f \in L^2(\rr^d) : \mathcal{H}_{m,k} f \in L^2(\rr^d) \right\},
\end{equation*}
when $m,k \geq 1$ are positive integers. This author establishes in \cite[Corollary~2.2]{Alphonse} the following quantitative estimates for fractional anisotropic Shubin operators:
for all $m, k \geq 1$, $s >0$, there exist some positive constants $C\geq 1$, $r_1, r_2>0$, $t_0>0$ such that
\begin{multline*}
\forall 0< t \leq t_0, \forall \alpha, \beta \in \nn^d, \forall f \in L^2(\rr^d),  \\ \|x^{\alpha} \partial_x^{\beta} \big(e^{-t\mathcal{H}_{m,k}^s} f\big) \|_{L^2(\rr^d)} \leq \frac{C^{1+|\alpha|+|\beta|}}{t^{r_1(|\alpha|+|\beta|)+r_2}} (\alpha!)^{\nu_{m,k,s}} (\beta !)^{\mu_{m,k,s}} \|f\|_{L^2(\rr^d)},
\end{multline*}
with
\begin{equation*}
\nu_{m,k,s}= \max\Big(\frac{1}{2sk}, \frac{m}{k+m} \Big) \quad \text{and} \quad \mu_{m,k,s}= \max\Big(\frac{1}{2sm}, \frac{k}{k+m} \Big).
\end{equation*}
Thanks to these quantitative estimates and the general result of null-controllability for evolution equations whose adjoint systems enjoy symmetric Gelfand-Shilov smoothing effects in \cite[Theorem~2.5]{MP}, Alphonse derives in \cite[Theorem~2.3]{Alphonse} a sufficient  growth condition on the density $\rho$ to ensure the null-controllability of evolution equations associated to the Shubin operators $\mathcal{H}_{l,l}$, with $l \geq 1$, in any positive time from any measurable thick control subset with respect to $\rho$. In this work, we extend this result to general Shubin operators $\mathcal{H}_{m,k}$, with $m, k \geq 1$, and more generally establish the null-controllability of evolution equations whose adjoint systems enjoy quantitative smoothing effects in specific Gelfand-Shilov spaces $S_{\nu}^{\mu}$.

The following result shows that null-controllability holds for the evolution equations \eqref{PDEgeneral} when the parameter $\delta$ ruling the growth of the density is strictly less than the critical parameter $\delta^*=\frac{1-\mu}{\nu}$.
\begin{theorem}\label{observability_result}
Let $(A,D(A))$ be a closed operator on $L^2(\rr^d)$ which is the infinitesimal generator of a strongly continuous contraction semigroup $(e^{-tA})_{t \geq 0}$ on $L^2(\rr^d)$ whose $L^2(\rr^d)$-adjoint generates a semigroup satisfying the quantitative smoothing estimates \eqref{GS_estimate} for some $0<\mu <1 $, $\nu >0$ such that $\mu+\nu \geq 1$. Let $\rho : \rr^d \longrightarrow (0,+\infty)$ be a contraction mapping such that there exist some constants $0 \leq \delta < \frac{1-\mu}{\nu}$, $m>0$, $R>0$ so that
\begin{equation*}
\forall x \in \rr^d, \quad 0< m \leq \rho(x) \leq R \langle x \rangle^{\delta}.
\end{equation*}
If $\omega \subset \rr^d$ is a measurable subset thick with respect to the density $\rho$, the evolution equation
\begin{equation*}\label{PDEadjoint}
\left\lbrace \begin{array}{ll}
\partial_tf(t,x) + Af(t,x)=u(t,x)\un_{\omega}(x), \quad &  x \in \mathbb{R}^d, \ t>0, \\
f|_{t=0}=f_0 \in L^2(\rr^d),                                       &  
\end{array}\right.
\end{equation*}
is null-controllable from the control subset $\omega$ in any positive time $T>0$; and equivalently, the adjoint system
\begin{equation*}\label{PDEadjoint}
\left\lbrace \begin{array}{ll}
\partial_t g(t,x) + A^*g(t,x)=0, \quad &  x \in \mathbb{R}^d, \ t>0, \\
g|_{t=0}=g_0 \in L^2(\rr^d),                                       &  
\end{array}\right.
\end{equation*}
is observable from the control subset $\omega$ in any positive time $T>0$. More precisely, there exists a positive constant $K=K(d, \rho, \delta, \mu, \nu) \geq 1$ such that 
\begin{equation*}
\forall g \in L^2(\rr^d), \forall T>0, \quad \|e^{-TA^*}g\|^2_{L^2(\rr^d)} \leq K \exp\Big(\frac{K}{T^{\frac{2r_1}{1-\mu-\delta \nu}}}\Big) \int_0^T \|e^{-tA^*}g\|^2_{L^2(\omega)} dt.
\end{equation*}
\end{theorem}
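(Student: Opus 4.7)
My plan is to derive the observability estimate for the adjoint system from two ingredients: the quantitative Gelfand-Shilov smoothing estimate \eqref{GS_estimate} and the uncertainty principle provided by Theorem~\ref{specific_GS_uncertaintyprinciple}, and then to invoke an abstract observability result of Lebeau-Robbiano type (namely \cite[Theorem~2.1]{BeauchardPravdaStarov} or its non-autonomous generalization \cite[Theorem~3.2]{BEP}).

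First, I would convert the smoothing estimate \eqref{GS_estimate} into a bound involving the weighted semi-norms $\|\langle x\rangle^p\partial^\beta_x\cdot\|_{L^2}$ that appear on the right-hand side of \eqref{up_schwartz}. Since $\langle x\rangle^p=(1+|x|^2)^{p/2}$ is controlled by a finite linear combination of monomials $x^{2\alpha}$ with $|\alpha|\leq p$, elementary combinatorial manipulations yield a constant $C_1 \geq 1$ such that for all $0 < t \leq t_0$, $p \in \nn$, $\beta \in \nn^d$, and $g_0 \in L^2(\rr^d)$,
\begin{equation*}
\|\langle x\rangle^p\partial^\beta_x(e^{-tA^*}g_0)\|_{L^2(\rr^d)} \leq \frac{C_1^{p+|\beta|+1}}{t^{r_1(p+|\beta|)+r_2}}(p!)^\nu(|\beta|!)^\mu \|g_0\|_{L^2(\rr^d)}.
\end{equation*}

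Next, setting $\beta_0:=1-\mu-\delta\nu$, which is positive thanks to the strict inequality $\delta<(1-\mu)/\nu$, I would apply Theorem~\ref{specific_GS_uncertaintyprinciple} to the function $e^{-tA^*}g_0$ with the parameter $C_1/t^{r_1}$ playing the role of $A$ in \eqref{up_schwartz}. By the preceding bound, the semi-norm on the right-hand side of \eqref{up_schwartz} is controlled by $C_1 t^{-r_2}\|g_0\|_{L^2}$, yielding, for all $\eps \in (0,1]$ and $0 < t \leq t_0$,
\begin{equation*}
\|e^{-tA^*}g_0\|^2_{L^2(\rr^d)} \leq \mathcal{C}_\eps(t)\|e^{-tA^*}g_0\|^2_{L^2(\omega)} + \eps\,\frac{C_1^2}{t^{2r_2}}\|g_0\|^2_{L^2(\rr^d)},
\end{equation*}
with observation constant bounded by
\begin{equation*}
\log\mathcal{C}_\eps(t) \leq K\Big(1+\log(1/\eps)+\frac{K_2}{t^{2r_1/\beta_0}}\Big).
\end{equation*}

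This pointwise-in-time estimate, combining a spectral-like inequality with explicit scale $t$ and a quantitative Gelfand-Shilov remainder term, is exactly of the form handled by the abstract observability framework of Beauchard-Pravda-Starov-Miller in \cite[Theorem~2.1]{BeauchardPravdaStarov} and its non-autonomous generalization \cite[Theorem~3.2]{BEP}. The argument proceeds by telescoping along a geometric sequence of times $t_k=T(1-2^{-k})\to T$, applying the previous inequality on each interval $(t_k,t_{k+1})$ with a carefully tuned parameter $\eps_k$ and exploiting the contractivity of $(e^{-tA^*})_{t\geq 0}$ (inherited from the contractivity of $(e^{-tA})_{t \geq 0}$ by duality). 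This iteration produces the announced observability inequality
\begin{equation*}
\|e^{-TA^*}g_0\|^2_{L^2(\rr^d)} \leq K\exp\Big(\frac{K}{T^{2r_1/\beta_0}}\Big)\int_0^T\|e^{-tA^*}g_0\|^2_{L^2(\omega)}\,dt,
\end{equation*}
and the null-controllability statement is then equivalent to observability by the Hilbert Uniqueness Method.

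The main obstacle is the last step: the sequence $(\eps_k)$ must be chosen so that the remainder terms telescope to something absorbable, while keeping the accumulated observation constants controlled by $\exp(K/T^{2r_1/\beta_0})$ after summation. The strict inequality $\delta<(1-\mu)/\nu$, which guarantees $\beta_0>0$, is precisely the feature that makes this balance feasible; the exponent $2r_1/\beta_0$ then naturally emerges in the final observability constant.
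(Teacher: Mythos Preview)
Your proposal is essentially correct and follows the same route as the paper: convert \eqref{GS_estimate} to $\langle x\rangle^p$-weighted estimates, feed these into Theorem~\ref{specific_GS_uncertaintyprinciple} to get a pointwise-in-time inequality of the form $\|e^{-tA^*}g\|^2_{L^2}\leq \mathcal{C}_\eps(t)\|e^{-tA^*}g\|^2_{L^2(\omega)}+\eps\,t^{-2r_2}\|g\|^2_{L^2}$, and then telescope along a geometric time sequence using contractivity.

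One caveat worth flagging: invoking \cite[Theorem~2.1]{BeauchardPravdaStarov} as a black box is not quite right here, because that abstract result is formulated in terms of a family of spectral projections $(\pi_k)_k$ and a genuine dissipation estimate on the high-frequency part, whereas the $\eps$-splitting you obtain does not come from projecting onto eigenspaces of any operator. The paper makes this point explicitly and instead carries out Miller's direct telescoping argument by hand: it integrates the pointwise estimate over $[q\tau,\tau]$, chooses $\eps=\exp(-\tau^{-2r_1/(1-\mu-\delta\nu)})$, picks $q\in[\tfrac12,1)$ so that the remainder matches the weight function $f(\tau)=(1-q)\exp(-2K'/\tau^{2r_1/(1-\mu-\delta\nu)})$ at scale $q\tau$, and telescopes along $\tau_k=q^k(1-q)T$. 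Your description of the telescoping step is correct in spirit, and your sequence $t_k=T(1-2^{-k})$ would also work; just be aware that you are really re-deriving the Miller argument rather than citing an off-the-shelf theorem.
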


\medskip
The proof of Theorem~\ref{observability_result} is given in Section~\ref{proof_obs}. It is derived from the uncertainty principles established in Theorem~\ref{specific_GS_uncertaintyprinciple} while revisiting the adapted Lebeau-Robbiano method used in \cite[Section~8.3]{BeauchardPravdaStarov} with some inspiration taken from the work of Miller \cite{Miller}.

Contrary to \cite[Theorem~2.5]{MP}, where the authors take advantage of the characterization of symmetric Gelfand-Shilov spaces through the decomposition into Hermite basis, let us stress that the above proof does not rely on a similar characterization of general Gelfand-Shilov spaces through the decomposition into an Hilbert basis composed by the eigenfunctions of a suitable operator.
In the critical case $\delta = \delta^*$, the null-controllability of the evolution equation \eqref{PDEgeneral} whose adjoint system enjoys quantitative smoothing estimates in the Gelfand-Shilov space $S_{\nu}^{\mu}$ is still an open problem.

As mentionned above, the general Shubin operators $\mathcal{H}_{m,k}$ are self-adjoint and generate strongly continuous semigroups on $L^2(\rr^d)$, which enjoy quantitative smoothing effects. Consequently, Theorem~\ref{observability_result} can be directly applied to obtain the following null-controllability results:

\medskip

\begin{corollary}
Let $m,k \geq 1$ be positive integers, $s> \frac{1}{2m}$ and 
\begin{equation*}
\delta^*_{m,k,s} := \left\lbrace \begin{array}{ll}
1 &  \text{if } s \geq \frac{m+k}{2mk}, \\
\frac{k}{m} (2sm-1)                       &  \text{if } \frac{1}{2m} < s \leq \frac{m+k}{2mk}.
\end{array}\right.
\end{equation*}
Let $\rho : \rr^d \longrightarrow (0,+\infty)$ be a contraction mapping such that there exist some constants $0 \leq \delta < \delta^*$, $m>0$, $R>0$ so that
\begin{equation*}
\forall x \in \rr^d, \quad 0< m \leq \rho(x) \leq R \langle x \rangle^{\delta}.
\end{equation*}
If $\omega \subset \rr^d$ is a measurable subset thick with respect to the density $\rho$, the evolution equation associated to the fractional Shubin operator
\begin{equation*}\label{PDE_shubin}
\left\lbrace \begin{array}{ll}
\partial_tf(t,x) + \mathcal{H}_{m,k}^s f(t,x)=u(t,x)\un_{\omega}(x), \quad &  x \in \mathbb{R}^d, \ t>0, \\
f|_{t=0}=f_0 \in L^2(\rr^d),                                       &  
\end{array}\right.
\end{equation*}
is null-controllable from the control subset $\omega$ in any time $T>0$.
\end{corollary}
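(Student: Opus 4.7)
The plan is to apply Theorem~\ref{observability_result} directly to $A = \mathcal{H}_{m,k}^s$. First I would observe that, defined through the spectral calculus of the self-adjoint nonnegative operator $\mathcal{H}_{m,k}$, the fractional Shubin operator $\mathcal{H}_{m,k}^s$ is itself self-adjoint and nonnegative, so that $A^* = A = \mathcal{H}_{m,k}^s$ generates a strongly continuous contraction semigroup on $L^2(\rr^d)$. The quantitative smoothing estimates from Alphonse's work \cite[Corollary~2.2]{Alphonse} quoted in Section~\ref{null_controllability_non_symmetric_GS} provide exactly the hypothesis \eqref{GS_estimate} with Gelfand-Shilov indices
\begin{equation*}
\mu = \mu_{m,k,s} = \max\left(\frac{1}{2sm}, \frac{k}{k+m}\right), \qquad \nu = \nu_{m,k,s} = \max\left(\frac{1}{2sk}, \frac{m}{k+m}\right).
\end{equation*}

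Next I would verify the structural assumptions of Theorem~\ref{observability_result}, namely $0 < \mu < 1$ and $\mu + \nu \geq 1$. Since $m,k \geq 1$, we have $\frac{k}{k+m} < 1$; and the condition $s > \frac{1}{2m}$ yields $\frac{1}{2sm} < 1$, so indeed $\mu_{m,k,s} < 1$. For the inequality $\mu + \nu \geq 1$, the two defining expressions $\frac{k}{k+m} + \frac{m}{k+m} = 1$ and $\frac{1}{2sm} + \frac{1}{2sk} = \frac{k+m}{2smk}$ (the latter being $\geq 1$ precisely when $s \leq \frac{k+m}{2mk}$) both give $\mu + \nu \geq 1$ by taking maxima.

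The bulk of the argument then reduces to a routine case analysis computing the critical exponent $\delta^* = \frac{1-\mu_{m,k,s}}{\nu_{m,k,s}}$ appearing in Theorem~\ref{observability_result}. When $s \geq \frac{m+k}{2mk}$, one checks that $\frac{1}{2sm} \leq \frac{k}{k+m}$ and $\frac{1}{2sk} \leq \frac{m}{k+m}$, so $\mu = \frac{k}{k+m}$, $\nu = \frac{m}{k+m}$, which gives $\delta^* = 1$. When $\frac{1}{2m} < s \leq \frac{m+k}{2mk}$, the inequalities reverse, yielding $\mu = \frac{1}{2sm}$ and $\nu = \frac{1}{2sk}$, hence
\begin{equation*}
\delta^* \;=\; 2sk\left(1 - \frac{1}{2sm}\right) \;=\; \frac{k(2sm-1)}{m}.
\end{equation*}
These values match precisely the definition of $\delta^*_{m,k,s}$ in the statement of the corollary.

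With all hypotheses of Theorem~\ref{observability_result} verified and the critical exponent identified, the theorem applied to any contraction mapping $\rho$ satisfying $0 < m \leq \rho(x) \leq R\langle x\rangle^{\delta}$ with $0 \leq \delta < \delta^*_{m,k,s}$ and any $\omega$ thick with respect to $\rho$ yields null-controllability of the evolution equation associated to $\mathcal{H}_{m,k}^s$ in arbitrary positive time. There is no genuine obstacle in this final step: the entire analytic content is encapsulated in the Gelfand-Shilov smoothing estimates of Alphonse and in Theorem~\ref{observability_result}, and the corollary is a bookkeeping exercise matching indices.
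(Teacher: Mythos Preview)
Your proposal is correct and follows precisely the approach the paper itself indicates: the corollary is obtained as a direct application of Theorem~\ref{observability_result} to $A=\mathcal{H}_{m,k}^s$, using Alphonse's smoothing estimates \cite[Corollary~2.2]{Alphonse} to supply the quantitative Gelfand-Shilov bounds~\eqref{GS_estimate}. Your case analysis computing $\delta^*=\frac{1-\mu_{m,k,s}}{\nu_{m,k,s}}$ is accurate and in fact more detailed than what the paper spells out.
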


\medskip

\section{Proof of the uncertainty principles}
\subsection{Proof of Theorem~\ref{general_uncertaintyprinciple}}\label{proof_mainprop}
This section is devoted to the proof of Theorem~\ref{general_uncertaintyprinciple}.
Let $\rho : \rr^d \longrightarrow (0,+\infty)$ be a positive contraction mapping such that there exist some positive constants $m>0$, $R>0$ so that
\begin{equation}\label{rho_condi}
\forall x \in \rr^d, \quad 0<m \leq \rho(x) \leq R \left\langle x \right\rangle.
\end{equation}
Let $\omega \subset \rr^d$ be a measurable subset $\gamma$-thick with respect to the density $\rho$, that is,
\begin{equation*}\label{thick_rho}
\exists 0 < \gamma \leq 1, \forall x \in \rr^d, \quad |\omega \cap B(x,\rho(x))| \geq \gamma  |B(x,\rho(x))|=\gamma \rho(x)^d |B(0,1)|,
\end{equation*}
where $B(x,r)$ denotes the Euclidean ball centered at $x \in \rr^d$ with radius $r>0$, and where $|\cdot|$ denotes the Lebesgue measure.
Since $\rho$ is a positive contraction mapping, Lemma~\ref{slowmet} in Appendix and the remark made after the statement of this result show that the family of norms $(\|\cdot\|_x)_{x \in \rr^d}$ given by
\begin{equation*}
\forall x \in \rr^d, \forall y \in \rr^d, \quad \|y\|_x=\frac{\|y\|}{\rho(x)},
\end{equation*}
where $\|\cdot\|$ denotes the Euclidean norm in $\rr^d$, defines a slowly varying metric on $\rr^d$.

\subsection{Step 1. Bad and good balls}  By using Theorem~\ref{slowmetric} in appendix, we can find a sequence $(x_k)_{k \geq 0}$ in $\rr^d$ such that 
\begin{multline}\label{recov}
\exists K_0 \in \nn, \forall (i_1, ..., i_{K_0+1}) \in \nn^{K_0+1} \textrm{ with } i_k \neq i_l \textrm{ if }1 \leq k \neq l \leq K_0+1, \\ \bigcap \limits_{k=1}^{K_0+1} {B_{i_k}}=\emptyset
\end{multline}
and
\begin{equation}\label{recov1}
 \rr^d=\bigcup_{k=0}^{+\infty} {B_k},
\end{equation}
where 
\begin{equation}\label{asdf1}
B_k=\{y \in \rr^d:\ \|y-x_k\|_{x_k} <1\}=\{y \in \rr^d:\ \|y-x_k\| <\rho(x_k)\}=B(x_k,\rho(x_k)). 
\end{equation}
Let us notice from Theorem~\ref{slowmetric} that the non-negative integer $K_0 =K_0(d, L)$ only depends on the dimension $d$ and $L$ the Lipschitz constant of $\rho$, since the constant $C \geq 1$ appearing in slowness condition (\ref{equiv}) can be taken equal to $C=\frac{1}{1-L}$.  
It follows from (\ref{recov}) and (\ref{recov1}) that 
\begin{equation}\label{asdf2}
\forall x \in \rr^d, \quad 1 \leq \sum \limits_{k=0}^{+\infty} \mathbbm{1}_{B_k} (x) \leq K_0,
\end{equation}
where $\mathbbm{1}_{B_k}$ denotes the characteristic function of $B_k$.
We deduce from (\ref{asdf2}) and the Fubini-Tonelli theorem that for all $g \in L^2(\rr^d)$,
\begin{equation*}
\|g\|_{L^2(\rr^d)}^2 = \int_{\rr^d}|g(x)|^2dx \leq \sum_{k=0}^{+\infty}\int_{B_k}|g(x)|^2dx \leq K_0 \|g\|_{L^2(\rr^d)}^2.
\end{equation*}
Let $f \in GS_{\mathcal{N}, \rho} \setminus \{0\}$ and $\eps>0$. We divide the family of balls $(B_k)_{k \geq 0}$ into families of good and bad balls. A ball $B_k$, with $k \in \nn$, is said to be good if it satisfies 
\begin{equation}\label{good_ball}
\forall p \in \nn, \forall \beta \in \nn^d, \quad 
\int_{B_k} |\rho(x)^{p} \partial_{x}^{\beta} f(x)|^2 dx  
\leq \eps^{-1}2^{2(p+|\beta|)+d+1} K_0 N^{2}_{p, |\beta|} \int_{B_k} |f(x)|^2 dx,
\end{equation}
On the other hand, a ball $B_k$, with $k \in \nn$, which is not good, is said to be bad, that is, when 
\begin{multline}\label{bad_ball}
\exists (p_0, \beta_0) \in \nn \times \nn^d, \\
\int_{B_k} |\rho(x)^{p_0} \partial_{x}^{\beta_0} f(x)|^2 dx 
> \eps^{-1}2^{2(p_0+|\beta_0|)+d+1} K_0 N^{2}_{p_0, |\beta_0|}  \int_{B_k} |f(x)|^2 dx.
\end{multline}
If $B_k$ is a bad ball, it follows from \eqref{bad_ball} that there exists $(p_0, \beta_0) \in \nn \times \nn^d$ such that
\begin{multline}\label{gh05}
 \int_{B_k}|f(x)|^2dx \leq
\frac{\eps}{2^{2(p_0+|\beta_0|)+d+1} K_0 N^{2}_{p_0, |\beta_0|}}\int_{B_k} \rho(x)^{2p_0}|\partial_x^{\beta_0}f(x)|^2dx  \\ \leq  \sum_{(p,\beta) \in \nn\times \nn^d} \frac{\eps}{2^{2(p+|\beta|)+d+1} K_0 N^{2}_{p, |\beta|} }\int_{B_k}\rho(x)^{2p}|\partial_x^{\beta}f(x)|^2dx.
\end{multline}
By summing over all the bad balls and by using from (\ref{recov}) that
\begin{equation*}
\mathbbm{1}_{\bigcup_{\textrm{bad balls}} B_k} \leq \sum_{\textrm{bad balls}} \mathbbm{1}_{B_k} \leq K_0 \mathbbm{1}_{\bigcup_{\textrm{bad balls}} B_k},
\end{equation*} we deduce from (\ref{gh05}) and the Fubini-Tonelli theorem that 
\begin{multline}\label{gh6}
\int_{\bigcup_{\textrm{bad balls}} B_k}|f(x)|^2dx \leq \sum_{\textrm{bad balls}}\int_{B_k}|f(x)|^2dx 
\\ \leq \sum_{(p,\beta) \in \nn\times \nn^d}\frac{\eps}{2^{2(p+|\beta|)+d+1} N^{2}_{p, |\beta|} } \int_{\bigcup_{\textrm{bad balls}}  B_k} \hspace{-8mm}  |\rho(x)^{p} \partial_x^{\beta} f(x)|^2dx.
\end{multline}
By using that the number of solutions to the equation $p+\beta_1+...+\beta_{d}=m$, with $m \geq 0$, $d \geq 1$ and unknowns $p \in \nn$ and $\beta=(\beta_1,...,\beta_d) \in \nn^{d}$, is given by $\binom{m+d}{m}$, we obtain from (\ref{gh6}) that 
\begin{multline}\label{gh6y}
\int_{\bigcup_{\textrm{bad balls}} B_k}|f(x)|^2dx \leq \eps \sum_{m \geq 0} \frac{\binom{m+d}{m}}{2^{d+1} 4^m} \|f\|_{GS_{\mathcal{N},\rho}}^2 \\ 
\leq \eps \sum_{m \geq 0} \frac{2^{m+d}}{2^{d+1} 4^m} \|f\|^2_{GS_{\mathcal{N},\rho}}= \eps \|f\|^2_{GS_{\mathcal{N},\rho}},
\end{multline}
since 
\begin{equation*}\label{gh45}
\binom{m+d}{m} \leq \sum_{j=0}^{m+d}\binom{m+d}{j}=2^{m+d}.
\end{equation*}
Recalling from (\ref{recov1}) that 
$$ 1 \leq \mathbbm{1}_{\bigcup_{\textrm{bad balls}}B_k}+ \mathbbm{1}_{\bigcup_{\textrm{good balls}}B_k},$$ 
we notice that  
\begin{equation}\label{asdf5}
\|f\|_{L^2(\rr^d)}^2 \leq \int_{\bigcup_{\textrm{good balls}} B_k}|f(x)|^2dx+ \int_{\bigcup_{\textrm{bad balls}} B_k}|f(x)|^2dx.
\end{equation}
It follows from (\ref{gh6y}) and (\ref{asdf5}) that 
\begin{equation}\label{gh7}
\|f\|_{L^2(\rr^d)}^2 \leq \int_{\bigcup_{\textrm{good balls}} B_k}|f(x)|^2dx+ \eps \|f\|^2_{GS_{\mathcal{N},\rho}}.
\end{equation}

\subsection{Step 2. Properties on good balls}
As the ball $B(0,1)$ is an Euclidean ball, the Sobolev embedding 
$$W^{d,2}(B(0,1)) \xhookrightarrow{} L^{\infty}(B(0,1)),$$
see e.g.~\cite{adams} (Theorem~4.12), implies that there exists a positive constant $C_{d}\geq 1$ depending only the dimension $d \geq 1$ such that 
\begin{equation}\label{sobolev}
\forall u \in W^{d,2}(B(0,1)), \quad 
\|u\|_{L^{\infty}(B(0,1))} \leq C_{d} \|u\|_{W^{d,2}(B(0,1))}.
\end{equation}
By translation invariance and homogeneity of the Lebesgue measure, it follows from (\ref{rho_condi}), (\ref{asdf1}) and (\ref{sobolev}) that for all $u \in {W^{d,2}(B_k)}$,
\begin{multline*}
\|u\|^2_{L^{\infty}(B_k)}=\|x \mapsto u(x_k+x \rho(x_k))\|^2_{L^{\infty}(B(0,1))}
 \leq C_{d}^2 \|x \mapsto u(x_k+x \rho(x_k))\|^2_{W^{d,2}(B(0,1))} \\
 =C_{d}^2 \sum_{\substack{\alpha \in \nn^d, \\ |\alpha| \leq d}} \int_{B_k} \rho(x_k)^{2|\alpha|-d} |\partial^{\alpha}_x u(x)|^2 dx
 =C_{d}^2 \sum_{\substack{\alpha \in \nn^d, \\ |\alpha| \leq d}} \int_{B_k}m^{2|\alpha|-d} \Big( \frac{\rho(x_k)}{m} \Big)^{2|\alpha|-d} |\partial^{\alpha}_x u(x)|^2 dx\end{multline*}
and
\begin{multline}{\label{se1}}
\|u\|^2_{L^{\infty}(B_k)}
 \leq C_{d}^2 \max(m,m^{-1})^d \sum_{\substack{\alpha \in \nn^d, \\ |\alpha| \leq d}} \int_{B_k} \Big( \frac{\rho(x_k)}{m} \Big)^{d} |\partial^{\alpha}_x u(x)|^2 dx\\
 = C_{d}^2 \max(1,m^{-1})^{2d} \rho(x_k)^{d} \sum_{\substack{\alpha \in \nn^d, \\ |\alpha| \leq d}} \int_{B_k} |\partial^{\alpha}_x u(x)|^2 dx.
\end{multline}
We deduce from (\ref{se1}) that for all $u \in {W^{d,2}(B_k)}$,
\begin{equation}{\label{se2}}
\|u\|_{L^{\infty}(B_k)} \leq C_{d} \max(1,m^{-1})^{d} \rho(x_k)^{\frac{d}{2}} \|u\|_{W^{d,2}(B_k)}.
\end{equation}
Let $B_k$ be a good ball. By using the fact that $\rho$ is a $L$-Lipschitz function, we notice that
\begin{equation}{\label{equi}}
\forall x \in B_k=B(x_k,\rho(x_k)), \quad 0 < \rho(x_k) \leq \frac{1}{1-L} \rho(x).
\end{equation} 
We deduce from (\ref{se2}) and (\ref{equi}) that for all $\beta \in \nn^d$ and $k \in \nn$ such that $B_k$ is a good ball
\begin{align}\label{gh30}
& \  \rho(x_k)^{|\beta|+ \frac{d}{2}}\|\partial_x^{\beta}f\|_{L^{\infty}(B_k)} \\ \notag
\leq & \ C_d \max(1,m^{-1})^{d} \rho(x_k)^{|\beta|+ d}\Big(\sum_{\substack{\tilde{\beta} \in \nn^d, \ |\tilde{\beta}| \leq d}}\|\partial_x^{\beta+\tilde{\beta}}f\|^2_{L^{2}(B_k)}\Big)^{\frac{1}{2}}\\ \notag
=  & \ C_d \max(1,m^{-1})^{d}  \Big(\sum_{\substack{\tilde{\beta} \in \nn^d, \ |\tilde{\beta}| \leq d}}\| \rho(x_k)^{|\beta|+ d}\partial_x^{\beta+\tilde{\beta}}f\|^2_{L^{2}(B_k)}\Big)^{\frac{1}{2}} \\ \notag
\leq & \ C_d \max(1,m^{-1})^{d} \frac{1}{(1-L)^{|\beta|+d}} \Big(\sum_{\substack{\tilde{\beta} \in \nn^d, \ |\tilde{\beta}| \leq d}}\| \rho(x)^{|\beta|+ d}\partial_x^{\beta+\tilde{\beta}}f\|^2_{L^{2}(B_k)}\Big)^{\frac{1}{2}}.
\end{align}
By using (\ref{rho_condi}) and the definition of good balls (\ref{good_ball}), it follows from (\ref{gh30}) and the fact that $\mathcal{N}$ is non-decreasing with respect to the two indexes that for all $\beta \in \nn^d$ and $k \in \nn$ such that $B_k$ is a good ball
\begin{align}\label{asdf7}
 & \ \rho(x_k)^{|\beta|+ \frac{d}{2}}\|\partial_x^{\beta}f\|_{L^{\infty}(B_k)} \\ \notag
\leq & \ C_d \max(1,m^{-1})^{d} \frac{1}{(1-L)^{|\beta|+d}} \Big(\sum_{\substack{\tilde{\beta} \in \nn^d, \\ |\tilde{\beta}| \leq d}} \eps^{-1} 2^{2(2|\beta|+ |\tilde{\beta}|)+3d+1} K_0 N^{2}_{|\beta|+d, |\beta|+ |\tilde{\beta}|}  \|f\|^2_{L^{2}(B_k)}\Big)^{\frac{1}{2}} \\ \notag
\leq & \ \eps^{-\frac{1}{2}} K_{d,m, L}  \Big(\frac{4}{1-L} \Big)^{|\beta|} N_{|\beta|+d, |\beta|+d} \|f\|_{L^{2}(B_k)},
\end{align}
with $$K_{d,m, L}= C_d \max(1,m^{-1})^d \sqrt{2K_0}\Big(\frac{4\sqrt{2}}{1-L} \Big)^d (d+1)^{\frac{d}{2}}  \geq 1,$$
since $C_d \geq 1$.
\subsection{Step 3 : Recovery of the $L^2(\rr^d)$-norm.} Let $B_k$ be a good ball. Let us assume that $\|f\|_{L^2(B_k)} \neq0$. We can therefore define the following function

\begin{equation}\label{gh13b}
\forall y \in B(0,1), \quad \phi(y)=\eps^{\frac{1}{2}}\rho(x_k)^{\frac{d}{2}}\frac{f(x_k+\rho(x_k) y)}{ K_{d,m,L} N_{d,d} \|f\|_{L^2(B_k)}}.
\end{equation}
We observe that 
\begin{equation*}
\|\phi\|_{L^{\infty}(B(0,1))} = \eps^{\frac{1}{2}}\rho(x_k)^{\frac{d}{2}}\frac{\|f\|_{L^{\infty}(B_k)}}{K_{d,m,L} N_{d,d} \|f\|_{L^2(B_k)}} \geq \frac{\eps^{\frac{1}{2}}}{|B(0,1)|^{\frac{1}{2}}K_{d,m,L} N_{d,d}},
\end{equation*}
and
\begin{equation}\label{qa1}
\forall \beta \in \nn^d, \quad \| \partial_x^{\beta} \phi \|_{L^{\infty}(B(0,1))}  = \frac{\eps^{\frac{1}{2}} \rho(x_k)^{|\beta|+\frac{d}{2}}  \|\partial_x^{\beta} f\|_{L^{\infty}(B_k)}}{K_{d,m,L} N_{d,d}\|f\|_{L^2(B_k)}}.
\end{equation}
It follows from \eqref{asdf7} and \eqref{qa1} that 
\begin{equation}\label{qa2}
\forall \beta \in \nn^d, \quad \| \partial_x^{\beta} \phi \|_{L^{\infty}(B(0,1))} \leq \Big( \frac{4}{1-L} \Big)^{|\beta|}  \frac{N_{|\beta|+d,|\beta|+d}}{N_{d,d}}.
\end{equation}
We deduce from \eqref{qa2} that $\phi \in \mathcal{C}_{\mathcal{M}'}(B(0,1))$ with $$\mathcal{M}'=(M'_p)_{p \in \nn}= \Big(\Big( \frac{4}{1-L} \Big)^{p}  \frac{N_{p+d,p+d}}{N_{d,d}}\Big)_{p \in \nn}.$$ The assumption that the diagonal sequence $\mathcal{M}=(N_{p,p})_{p \in \nn}$ is logarithmically-convex and quasi-analytic implies that these two properties hold true as well for the sequence $\mathcal{M}'$. Indeed, the logarithmic convexity of $\mathcal{M}'$ is straightforward and since
\begin{equation*}
\sum_{p=0}^{+\infty} \frac{M'_p}{M'_{p+1}} = \frac{1-L}{4} \sum_{p=d}^{+\infty} \frac{N_{p,p}}{N_{p+1,p+1}},
\end{equation*}
we deduce from the quasi-analyticity of $\mathcal{M}$ and from the Denjoy-Carleman's Theorem (Theorem~\ref{Den_Carl_thm}) that the sequence $\mathcal{M}'$ is also quasi-analytic.
 Furthermore, we observe from the definition of the Bang degree \eqref{Bang} and the equality
 \begin{equation*}
 \forall 0<t \leq 1, \forall n \in \nn^*, \quad \sum_{-\log t < p \leq n} \frac{M'_{p-1}}{M'_{p}} = \frac{1-L}{4} \sum_{-\log(t e^{-d})< p \leq n+d} \frac{N_{p-1,p-1}}{N_{p,p}}
 \end{equation*}
  that
\begin{equation}\label{bang1309}
\forall 0< t \leq 1, \quad n_{t, \mathcal{M}',2e d} = n_{te^{-d}, \mathcal{M}, \frac{8d}{1-L}e} -d \leq n_{te^{-d}, \mathcal{M}, \frac{8d}{1-L}e}.
\end{equation}
Setting
\begin{equation*}\label{m_10}
E_k=\Big\{\frac{x-x_k}{\rho(x_k)} \in B(0,1): \ x \in B_k \cap \omega \Big\} \subset B(0,1),
\end{equation*}
we notice from \eqref{asdf1} that
\begin{equation}\label{m_11}
|E_k| = \frac{|\omega \cap B_k|}{\rho(x_k)^d} \geq  \frac{\gamma |B_k|}{\rho(x_k)^d}  \geq \gamma |B(0,1)| >0,
\end{equation}
since $\omega$ is $\gamma$-thick with respect to $\rho$ and $B_k=B(x_k,\rho(x_k))$. From now on, we shall assume that 
\begin{equation}\label{small_eps}
0< \eps \leq N^2_{0,0}.
\end{equation}
 We deduce from \eqref{bang1309} and Proposition~\ref{NSV_multid_L2} applied with the function $\phi$ and the measurable subset $E_k$ of the bounded convex open ball $B(0,1)$ that there exists a positive constant $D_{\eps}=D\big(\eps, \mathcal{N}, d, \gamma, L,m\big)>1$ independent on $\phi$ and $k$ such that 
\begin{equation}\label{qa3}
\int_{B(0,1)} |\phi(x)|^2 dx \leq D_{\eps} \int_{E_k} |\phi(x)|^2 dx,
\end{equation}
with
\begin{equation*}
D_{\eps}= \frac{2}{\gamma} \bigg(\frac{2d}{\gamma} \Gamma_{\mathcal{M}'}\big(2n_{t, \mathcal{M}', 2ed}\big) \bigg)^{4n_{t, \mathcal{M}', 2ed}},
\end{equation*}
and
\begin{equation*}
t= \frac{\eps^{\frac{1}{2}}}{\max\big(1,|B(0,1)|^{\frac{1}{2}}\big)K_{d,m,L} N_{d,d}}.
\end{equation*}
Notice that from \eqref{small_eps}, we have $$0< t \leq 1,$$
since $K_{d,m,L} \geq 1$ and $\mathcal{N}$ is non-decreasing with respect to the two indexes. Let us also notice from the definitions in \eqref{def_gamma} that
$$ \forall n \geq 1, \quad 1 \leq \Gamma_{\mathcal{M}'}(n) \leq \Gamma_{\mathcal{M}}(n+d).$$
We deduce from \eqref{bang1309} and the non-decreasing property of $\Gamma_{\mathcal{M}'}$ that
\begin{align}\label{C_eps}
D_{\eps}  & \leq \frac{2}{\gamma} \bigg(\frac{2d}{\gamma} \Gamma_{\mathcal{M}}\big(2n_{t, \mathcal{M}', 2ed}+d\big) \bigg)^{4n_{t, \mathcal{M}', 2ed}} \\\nonumber  
& \leq \frac{2}{\gamma} \bigg(\frac{2d}{\gamma} \Gamma_{\mathcal{M}}\big(2n_{te^{-d}, \mathcal{M}, \frac{8d}{1-L}e}\big) \bigg)^{4n_{te^{-d}, \mathcal{M}, \frac{8d}{1-L}e}}.
\end{align}
Let us denote $$C_{\eps}=\frac{2}{\gamma} \bigg(\frac{2d}{\gamma} \Gamma_{\mathcal{M}}\big(2n_{te^{-d}, \mathcal{M}, \frac{8d}{1-L}e}\big) \bigg)^{4n_{te^{-d}, \mathcal{M}, \frac{8d}{1-L}e}}.$$
We deduce from \eqref{gh13b}, \eqref{m_11}, \eqref{qa3} and \eqref{C_eps} that
\begin{equation}\label{qa4}
\int_{B_k} |f(x)|^2 dx \leq C_{\eps} \int_{\omega \cap B_k} |f(x)|^2 dx.
\end{equation}
Let us notice that the above estimate holds as well when $\| f \|_{L^2(B_k)}=0$.
By using anew from (\ref{recov}) that 
\begin{equation*}
\mathbbm{1}_{\bigcup_{\textrm{good balls}} B_k}  \leq \sum \limits_{\textrm{good balls}} \mathbbm{1}_{B_k} \leq K_0 \mathbbm{1}_{\bigcup_{\textrm{good balls}} B_k},
\end{equation*}
it follows from (\ref{gh7}) and (\ref{qa4}) that 
\begin{align*}\label{gh56y}
\|f\|_{L^2(\rr^d)}^2 & \leq \int_{\bigcup_{\textrm{good balls}} B_k}|f(x)|^2 dx + \eps \|f\|^2_{GS_{\mathcal{N},\rho}} \\ \notag
& \leq  \sum_{\textrm{good balls}}\|f\|_{L^{2}(B_k)}^2 + \eps \|f\|^2_{GS_{\mathcal{N}, \rho}} \\ \notag  
& \leq C_{\eps} \sum_{\textrm{good balls}} \int_{\omega \cap B_k} |f(x)|^2 dx + \eps \|f\|^2_{GS_{\mathcal{N}, \rho}} \\ \notag  
& \leq K_0 C_{\eps} \int_{\omega \cap \big(\bigcup_{\textrm{good balls}} B_k\big)}|f(x)|^2 dx +\eps \|f\|^2_{GS_{\mathcal{N}, \rho}}.
\end{align*}
The last inequality readily implies that
\begin{equation*}
\|f \|^2_{L^2(\rr^d)} \leq K_0 C_{\eps} \| f \|^2_{L^2(\omega)} +\eps \| f \|^2_{GS_{\mathcal{N}, \rho}}.
\end{equation*}
This ends the proof of Theorem~\ref{general_uncertaintyprinciple}.
\subsection{Proof of Theorem~\ref{specific_GS_uncertaintyprinciple}}\label{proof2}

Let $A \geq 1$, $0< \mu \leq 1$, $\nu >0$ with $\mu+\nu \geq 1$ and $0 \leq \delta \leq \frac{1-\mu}{\nu} \leq 1$. Let $f \in \mathscr{S}(\rr^d)$. We first notice that if the quantity 
\begin{equation*}
\sup_{p \in \nn, \beta \in \nn^d} \frac{\|\langle x \rangle^p \partial_x^{\beta} f \|_{L^2(\rr^d)}}{A^{p+|\beta|} (p!)^{\nu} (|\beta|!)^{\mu}} =+\infty,
\end{equation*}
is infinite, then the result of Theorem~\ref{specific_GS_uncertaintyprinciple} clearly holds. We can therefore assume that 
\begin{equation}\label{bernst_estim_gs}
\sup_{p \in \nn, \beta \in \nn^d} \frac{\|\langle x \rangle^p \partial_x^{\beta} f \|_{L^2(\rr^d)}}{A^{p+|\beta|} (p!)^{\nu} (|\beta|!)^{\mu}} <+\infty.
\end{equation}
By assumption, we have
\begin{equation}\label{rho_assum}
\exists m,R >0, \forall x \in \rr^d, \quad 0< m \leq \rho(x) \leq R \langle x \rangle^{\delta}.
\end{equation}
We deduce from \eqref{bernst_estim_gs}, \eqref{rho_assum} and Lemma~\ref{interpolation} that
\begin{align*}
\forall p \in \nn, \forall \beta \in \nn^d, \quad & \|\rho(x)^p \partial_x^{\beta} f\|_{L^2(\rr^d)} \leq R^p \| \langle x \rangle^{\delta p} \partial^{\beta}_x f\|_{L^2(\rr^d)}\\
& \leq R^p (4^{\nu}e^{\nu}A)^{p+|\beta|} (p!)^{\delta \nu} (|\beta|!)^{\mu} \sup_{q \in \nn, \gamma \in \nn^d} \frac{\|\langle x \rangle^q \partial_x^{\gamma} f \|_{L^2(\rr^d)}}{A^{q+|\gamma|} (q!)^{\nu} (|\gamma|!)^{\mu}} \\
& \leq (4^{\nu}e^{\nu}\max(1,R) A)^{p+|\beta|} (p!)^{\delta \nu} (|\beta|!)^{\mu} \sup_{q \in \nn, \gamma \in \nn^d} \frac{\|\langle x \rangle^q \partial_x^{\gamma} f \|_{L^2(\rr^d)}}{A^{q+|\gamma|} (q!)^{\nu} (|\gamma|!)^{\mu}},
\end{align*}
which implies that
\begin{equation*}
\|f\|_{GS_{\mathcal{N}, \rho}} \leq \sup_{p \in \nn, \beta \in \nn^d} \frac{\|\langle x \rangle^p \partial_x^{\beta} f \|_{L^2(\rr^d)}}{A^{p+|\beta|} (p!)^{\nu} (|\beta|!)^{\mu}} < +\infty,
\end{equation*}
with the non-decreasing sequence $$\mathcal{N}=\Big(\big(4^{\nu} e^{\nu}\max(1,R)A\big)^{p+q}(p!)^{\delta \nu} (q!)^{\mu}\Big)_{(p,q) \in \nn^2}.$$ The assumption $0\leq \delta \leq \frac{1-\mu}{\nu}$ implies that the diagonal sequence $$\mathcal{M}=(M_p)_{p \in \nn}=\Big(\big(4^{\nu} e^{\nu}\max(1,R)A\big)^{2p} (p!)^{\delta \nu + \mu}\Big)_{p \in \nn}$$ is a logarithmically convex quasi-analytic sequence thanks to the Denjoy-Carleman's theorem (Theorem~\ref{Den_Carl_thm}) and since $\delta \nu +\mu \leq 1$. Since $\omega$ is assumed to be $\gamma$-thick with respect to $\rho$ for some $0<\gamma \leq 1$, we deduce from Theorem~\ref{general_uncertaintyprinciple} that there exist some constants $K=K(d, \rho, \delta, \mu, \nu)\geq 1, K'=K'(d, \rho, \gamma)\geq1, r=r(d, \rho) \geq 1$ so that for all $0< \eps \leq M^2_0=1$,
\begin{multline}\label{appl_thm_up}
\|f\|^2_{L^2(\rr^d)} \leq C_{\eps} \| f\|^2_{L^2(\omega)} + \eps \|f\|^2_{GS_{\mathcal{N},\rho}} \\
\leq C_{\eps} \|f\|^2_{L^2(\omega)} + \eps \bigg(\sup_{p \in \nn, \beta \in \nn^d} \frac{\|\langle x \rangle^p \partial_x^{\beta} f \|_{L^2(\rr^d)}}{A^{p+|\beta|} (p!)^{\nu} (|\beta|!)^{\mu}}\bigg)^2,
\end{multline}
where
\begin{equation}\label{c_eps1609}
C_{\eps} =K' \bigg(\frac{2d}{\gamma}\Gamma_{\mathcal{M}}(2n_{t_0,\mathcal{M},r}) \bigg)^{4n_{t_0,\mathcal{M}, r}}
\end{equation}
with
 \begin{equation*}
 0< t_0=\frac{\eps^{\frac{1}{2}}}{K A^{2d}} \leq 1.
 \end{equation*}
 Direct computations
 \begin{equation*}
\forall N \geq 1, \quad \sum_{p > -\log t_0}^N \frac{M_{p-1}}{M_{p}} = (4^{\nu} e^{\nu}\max(1,R) A)^{-2} \sum_{p > -\log t_0}^{N} \frac{(p-1)!^{\delta \nu+\mu}}{p!^{\delta \nu+\mu}}, 
 \end{equation*}
 show that
 \begin{equation*}
 n_{t_0, \mathcal{M}, r} = n_{t_0, \mathcal{M}_{\delta \nu+\mu}, r'A^2},
 \end{equation*}
 with 
 \begin{equation*}
 \mathcal{M}_{\delta \mu+\nu} = \big((p!)^{\delta \nu + \mu} \big)_{p \in \nn} \quad \text{and} \quad r'=r 16^{\nu} e^{2\nu} \max(1,R^2).
 \end{equation*}
By using from Lemma~\ref{ex_qa_sequence} that $\Gamma_{\mathcal{M}_{\delta \nu +\mu}}$ is bounded, it follows that there exists a positive constant $D \geq 1$ such that 
 \begin{equation*}
 \forall n \in \nn^*, \quad \Gamma_{\mathcal{M}}(n) = \Gamma_{\mathcal{M}_{\delta \nu +\mu}}(n) \leq D.
 \end{equation*}
 By using anew Lemma~\ref{ex_qa_sequence} and \eqref{c_eps1609}, we deduce that when $0 \leq \delta \nu+\mu <1$,
 \begin{align}\label{ceps}
 0< C_{\eps} & \leq K'\bigg(\frac{2d}{\gamma} D \bigg)^{4n_{t_0, \mathcal{M}_{\delta \nu +\mu}, r' A^2}}\\ \nonumber
  & \leq K'\bigg(\frac{2d}{\gamma} D \bigg)^{ 2^{\frac{1}{1-\delta \nu-\mu}+2} \big( 1-\log t_0+(A^2 r')^{\frac{1}{1-\delta \nu-\mu}}\big)} \\ \nonumber
 & =K'\bigg(\frac{2d}{\gamma} D \bigg)^{2^{\frac{1}{1-\delta \nu-\mu}+2} \big(1+\log K+ 2d\log A-\frac{1}{2}\log \eps+(A^2 r')^{\frac{1}{1-\delta \nu-\mu}} \big)}.
 \end{align}
 Since $0 \leq \log A \leq A \leq A^{\frac{2}{1-\delta \nu-\mu}}$ and $\log \eps \leq 0$, it follows from \eqref{ceps} that
 \begin{equation*}\label{ceps2}
 0< C_{\eps} \leq K'\bigg(\frac{2d}{\gamma} D \bigg)^{D'\big(1-\log \eps +A^{\frac{2}{1-\delta \nu-\mu}} \big)},
 \end{equation*}
 with $D'= 2^{\frac{1}{1-\delta \nu-\mu}+2}\max \Big(1+\log K, 2d+ {r'}^{\frac{1}{1-\delta \nu-\mu}},\frac{1}{2}\Big)$. 
 On the other hand, when $\delta \nu +\mu =1$, Lemma~\ref{ex_qa_sequence} and the estimates \eqref{c_eps1609} imply that
 \begin{align}\label{ceps3}
 0< C_{\eps} & \leq  K'\bigg( \frac{2d}{\gamma} D \bigg)^{4n_{t_0, \mathcal{M}_{1}, r'A^2}}\\ \notag
 &  \leq  K'\bigg( \frac{2d}{\gamma} D \bigg)^{4(1-\log t_0)e^{r' A^2}}\\ \notag
 & \leq  K'\bigg( \frac{2d}{\gamma} D \bigg)^{4(1+\log K+2d \log A-\frac{1}{2}\log \eps)e^{r' A^2}}.
 \end{align}
 While setting $D'=4 \max\big(1+\log K, 2d, r', \frac{1}{2}\big)$, we obtain from \eqref{ceps3}
 \begin{equation*}
 0< C_{\eps} \leq  K'\bigg( \frac{2d}{\gamma} D \bigg)^{D'(1+ \log A-\log \eps)e^{D'A^2}}.
 \end{equation*}
 This ends the proof of Theorem~\ref{specific_GS_uncertaintyprinciple}.

\subsection{Proof of Theorem~\ref{uncertainty_principle}}\label{thm_hermite_proof}
Let $0 \leq \delta \leq 1$ and $\Theta : [0,+\infty) \longrightarrow [0,+\infty)$ be a non-negative continuous function such that the associated sequence $(M_p)_{p \in \nn}$ in \eqref{lc_sequence} satisfies the assumptions $\text{(H1)}$, $\text{(H2)}$ and $\text{(H3)}_{\frac{1+\delta}{2}}$. Beforehand, let us notice that the logarithmic convexity property of the sequence $(M_p)_{p \in \nn}$, that is,
\begin{equation*}
\forall p \in \nn^*, \quad M_{p}^2 \leq M_{p+1} M_{p-1},
\end{equation*}
implies that
\begin{equation*}
\forall p \in \nn^*, \quad \frac{M_p}{M_{p+1}} \leq \frac{M_{p-1}}{M_p} \leq \frac{M_0}{M_1},
\end{equation*}
since $M_p >0$ for all $p \in \nn$. It follows that the modified sequence $(M'_p)_{p \in \nn}= \big(\big(\frac{M_0}{M_1}\big)^p M_p \big)_{p \in \nn}$ is a non-decreasing logarithmically convex sequence.
Let $f \in GS_{\Theta}$ and $s=\frac{1+\delta}{2}$. According to Proposition~\ref{bernstein_estim1}, there exists a positive constant $D_{\Theta, d, \delta}\geq 1$ independent on $f$, such that for all $r \geq 0$, $\beta \in \nn^d$,
\begin{align*}
\|\langle x \rangle^{r} \partial_x^{\beta} f \|_{L^2(\rr^d)} & \leq (D_{\Theta,d,\delta})^{1+r+|\beta|} \Big(M_{\left\lfloor \frac{r+1 +|\beta|+(2-s)(d+1)}{2s} \right\rfloor +1}\Big)^s \|f\|_{GS_{\Theta}} \\
& =(D_{\Theta,d,\delta})^{1+r+|\beta|} \Big(\frac{M_1}{M_0}\Big)^{s\left\lfloor \frac{r+1 +|\beta|+(2-s)(d+1)}{2s} \right\rfloor+s} \Big(M'_{\left\lfloor \frac{r+1 +|\beta|+(2-s)(d+1)}{2s} \right\rfloor +1}\Big)^s \|f\|_{GS_{\Theta}} \\
& \leq D'^{1+r+|\beta|} \Big(M'_{\left\lfloor \frac{r+1 +|\beta|+(2-s)(d+1)}{2s} \right\rfloor +1}\Big)^s \|f\|_{GS_{\Theta}},
\end{align*}
where $D'=D'( \Theta, d , \delta) \geq 1$ is a new positive constant. Since the sequence $(M'_p)_{p \in \nn}$ is non-decreasing and $\frac{1}{2} \leq s \leq 1$, we deduce that 
\begin{equation*}
\forall r \geq0, \forall \beta \in \nn^d, \quad \|\langle x \rangle^{r} \partial_x^{\beta} f \|_{L^2(\rr^d)} \leq D'^{1+r+|\beta|}  \Big(M'_{\left\lfloor \frac{r+|\beta|}{2s} \right\rfloor +2d+4}\Big)^s \|f\|_{GS_{\Theta}}.
\end{equation*}
This implies that $f \in GS_{\mathcal{N}, \rho}$ and 
$$\| f \|_{GS_{\mathcal{N}, \rho}} \leq \|f\|_{GS_{\Theta}}$$ with $$\mathcal{N}=(N_{p,q})_{p,q \in \nn^d}=\Big(\max(R,1)^pD'^{1+p+q}  \Big(M'_{\left\lfloor \frac{\delta p+q}{1+\delta} \right\rfloor +2d+4}\Big)^{\frac{1+\delta}{2}}\Big)_{(p,q) \in \nn^2}.$$
We conclude by applying Theorem~\ref{general_uncertaintyprinciple}. To that end, we notice that the non-decreasing property of the sequence $(M'_p)_{p \in \nn}$ ensures that the sequence $\mathcal{N}$ is non-decreasing with respect to the two indexes. We deduce from the Denjoy-Carleman Theorem (Theorem~\ref{Den_Carl_thm}) and assumption $\text{(H3)}_{\frac{1+\delta}{2}}$ that the diagonal sequence $$(N_{p,p})_{p \in \nn} = \big(\max(R,1)^pD'^{1+2p}  (M'_{p +2d+4})^{\frac{1+\delta}{2}}\big)_{p \in \nn},$$ is quasi-analytic since
\begin{multline*}
\sum_{p=0}^{+\infty} \frac{N_{p,p}}{N_{p+1,p+1}}= \frac{1}{ \max(R,1)D'^2} \sum_{p=2d+4}^{+\infty} \Big(\frac{{M'_p}}{{M'_{p+1}}}\Big)^{\frac{1+\delta}{2}} \\= \frac{1}{\max(R,1) D'^2} \Big(\frac{M_1}{M_0}\Big)^{\frac{1+\delta}{2}} \sum_{p=2d+4}^{+\infty} \Big(\frac{{M_p}}{{M_{p+1}}}\Big)^{\frac{1+\delta}{2}}=+\infty.
\end{multline*}
The result of Theorem~\ref{uncertainty_principle} then follows from Theorem~\ref{general_uncertaintyprinciple}. It ends the proof of Theorem~\ref{uncertainty_principle}.
\section{Proof of Theorem~\ref{observability_result}}\label{proof_obs}
This section is devoted to the proof of the null-controllability result given by Theorem~\ref{observability_result}.

Let $(A,D(A))$ be a closed operator on $L^2(\rr^d)$ which is the  infinitesimal generator of a strongly continuous contraction semigroup $(e^{-tA})_{t \geq 0}$ on $L^2(\rr^d)$ that satisfies the following quantitative smoothing estimates: there exist some constants $0 < \mu <1$, $\nu > 0$ with $\mu+ \nu \geq 1$ and 
 $C \geq 1$, $r_1>0$, $r_2 \geq 0$, $0< t_0 \leq 1$ such that
\begin{multline}\label{GS_estimate2}
\forall 0< t \leq t_0, \forall \alpha, \beta \in \nn^d, \forall g \in L^2(\rr^d),\\
\| x^{\alpha}\partial_x^{\beta}( e^{-t A^*}g)\|_{L^2(\rr^d)} \leq  \frac{C ^{1+|\alpha|+|\beta|}}{t^{r_1 (|\alpha|+|\beta|)+r_2}}(\alpha !)^{\nu} (\beta!)^{\mu} \|g\|_{L^2(\rr^d)},
\end{multline}
where $A^*$ denotes the $L^2(\rr^d)$-adjoint of $A$.
Let $\rho : \rr^d \longrightarrow (0,+\infty)$ be a $L$-Lipschitz positive function
with~$\rr^d$ being equipped with the Euclidean norm and $0 < L <1$
such that there exist some constants $0 \leq  \delta < \frac{1-\mu}{\nu}$, $m>0$, $R>0$ so that
\begin{equation*}
\forall x \in \rr^d, \quad 0<m \leq \rho(x) \leq R{\left\langle x\right\rangle}^{\delta}.
\end{equation*} 
Let $\omega \subset \rr^d$ be a measurable subset that is thick with respect to the density $\rho$. Let us show that
Theorem~\ref{observability_result} can be deduced from the uncertainty principles given in Theorem~\ref{specific_GS_uncertaintyprinciple}. To that end, we deduce from the estimates \eqref{GS_estimate2} and Lemma~\ref{croch} that there exists a positive constant $C'=C'(C, d)\geq 1$ such that
\begin{multline}\label{bernstein_estimate_GS}
\forall 0< t \leq t_0, \forall p \in \nn, \forall \beta \in \nn^d, \forall g \in L^2(\rr^d),\\
\| \langle x \rangle^{p} \partial_x^{\beta}( e^{-t A^*}g)\|_{L^2(\rr^d)} \leq  \frac{C'^{1+p+|\beta|}}{t^{r_1 (p+|\beta|)+r_2}}(p !)^{\nu} (|\beta|!)^{\mu} \|g\|_{L^2(\rr^d)}.
\end{multline}
It follows from \eqref{bernstein_estimate_GS} and Theorem~\ref{specific_GS_uncertaintyprinciple} applied with $f=e^{-tA^*}g \in \mathscr{S}(\rr^d)$ that there exists a positive constant $K=K(\gamma, d, \rho, \mu, \nu) \geq 1$ such that
$\forall 0< t \leq t_0, \forall g \in L^2(\rr^d), \forall 0< \eps \leq 1,$
\begin{equation}
\|e^{-tA^*}g\|^2_{L^2(\rr^d)} \leq e^{K ( 1-\log \eps+(C' t^{-r_1})^{\frac{2}{1-s}})} \|e^{-tA^*}g\|^2_{L^2(\omega)} + \frac{C'^2}{t^{2r_2}} \eps \|g\|^2_{L^2(\rr^d)},
\end{equation} 
with $0<s=\delta \nu +\mu<1$.
Thanks to the contraction property of the semigroup $(e^{-tA^*})_{t \geq 0}$, we deduce that for all $0< \tau \leq t_0$, $\frac{1}{2} \leq q <1$, $0<\eps \leq 1$, $g \in L^2(\rr^d)$,
\begin{align*}
\|e^{-\tau A^*}g\|^2_{L^2(\rr^d)} & \leq \frac{1}{(1-q)\tau} \int_{q\tau}^{\tau} \|e^{-t A^*}g \|^2_{L^2(\rr^d)} dt \\ \nonumber
& \leq \frac{ e^{K (1-\log\eps+(C'(q\tau)^{-r_1})^{\frac{2}{1-s}})}}{(1-q)\tau} \int_{q \tau}^{\tau} \|e^{-tA^*} g \|^2_{L^2(\omega)} dt + \eps \frac{C'^2}{(q \tau)^{2r_2}} \|g\|^2_{L^2(\rr^d)} \\ \nonumber
& \leq \frac{ e^{K (1-\log\eps+(C'2^{r_1}\tau^{-r_1})^{\frac{2}{1-s}})}}{(1-q)\tau} \int_{q \tau}^{\tau} \|e^{-tA^*} g \|^2_{L^2(\omega)} dt + \eps \frac{4^{r_2}C'^2}{\tau^{2 r_2}} \|g\|^2_{L^2(\rr^d)}.
\end{align*}
For $0< \tau \leq t_0$ and $\frac{1}{2} \leq q <1$, we choose $$0<\eps = \exp\big(-\tau^{-\frac{2r_1}{1-s}}\big)\leq 1.$$ Since $1 \leq \frac{1}{\tau^{2r_1}}$, it follows that there exists a new constant $K'=K'(\gamma, d, \rho, \delta, \mu, \nu,C', r_1, s)\geq 1$ such that for all $0< \tau \leq t_0$, $\frac{1}{2} \leq q <1$, $g \in L^2(\rr^d)$,
\begin{equation*}
\|e^{-\tau A^*}g\|^2_{L^2(\rr^d)} \leq \frac{e^{K'\tau^{-\frac{2r_1}{1-s}}}}{(1-q)\tau} \int_{q \tau}^{\tau} \|e^{-tA^*} g \|^2_{L^2(\omega)} dt + \exp\big(-\tau^{-\frac{2r_1}{1-s}}\big) \frac{4^{r_2}C'^2}{\tau^{2 r_2}} \|g\|^2_{L^2(\rr^d)}.
\end{equation*}
We follow the strategy developed by Miller in \cite{Miller}.
Let $0<t_1\leq t_0$ such that for all $0< \tau \leq t_1$,
\begin{equation*}
\frac{\exp \Big(K' \tau^{-\frac{2r_1}{1-s}}\Big)}{\tau}  \leq \exp\Big(2K' \tau^{-\frac{2r_1}{1-s}}\Big)
\end{equation*}
and 
\begin{equation*}
\exp\big(-\tau^{-\frac{2r_1}{1-s}}\big) \frac{4^{r_2}C'^2}{\tau^{2 r_2}} \leq \exp\Big(-\frac{\tau^{-\frac{2r_1}{1-s}}}{2}\Big).
\end{equation*}
It follows that for all $0 < \tau \leq t_1$, $\frac{1}{2} \leq q <1$, $g \in L^2(\rr^d)$,
\begin{multline*}
 (1-q)\exp\Big(-\frac{2K'}{\tau^{\frac{2r_1}{1-s}}}\Big) \|e^{-\tau A^*}g\|^2_{L^2(\rr^d)} \\
 \leq \int_{q \tau}^{\tau} \|e^{-tA^*} g \|^2_{L^2(\omega)} dt +  (1-q)\exp\Big(-\frac{2K'+\frac{1}{2}}{\tau^{\frac{2r_1}{1-s}}}\Big)\|g\|^2_{L^2(\rr^d)}.
\end{multline*}
Setting $f(\tau)=(1-q)\exp\Big(-\frac{2K'}{\tau^{\frac{2r_1}{1-s}}}\Big)$ and choosing $q$ so that $$ \max\Big(\Big(\frac{2K'}{2K'+\frac{1}{2}}\Big)^{\frac{1-s}{2r_1}}, \frac{1}{2} \Big) \leq q<1,$$ we obtain that for all $0< \tau \leq t_1$ and $g \in L^2(\rr^d)$,
\begin{equation}\label{approx_obs1609}
f(\tau) \|e^{-\tau A^*}g\|^2_{L^2(\rr^d)} \leq \int_{q \tau}^{\tau} \|e^{-tA^*} g \|^2_{L^2(\omega)} dt +  f(q \tau) \|g\|^2_{L^2(\rr^d)}.
\end{equation}
Thanks to this estimate, the observability estimate is established as follows: let $0< T \leq t_1$ and define the two sequences $(\tau_k)_{k \geq 0}$ and $(T_k)_{k \geq 0}$ as
\begin{equation*}
\forall k \geq 0, \quad \tau_k= q^k (1-q) T \quad \text{ and } \quad  \forall k \geq 0, \quad  T_{k+1}= T_k-\tau_k, \quad T_0=T.
\end{equation*}
By applying \eqref{approx_obs1609} with $e^{-T_{k+1}A^*}g$, it follows that for all $g \in L^2(\rr^d)$ and $k \in \nn$,
\begin{multline*}
f(\tau_k) \|e^{-T_k A^*}g\|^2_{L^2(\rr^d)} -f(\tau_{k+1}) \|e^{-T_{k+1} A^*}g\|^2_{L^2(\rr^d)} \\
\leq \int_{\tau_{k+1}}^{\tau_k} \|e^{-(t+T_{k+1})A^*}g \|^2_{L^2(\omega)} dt 
= \int_{\tau_{k+1}+T_{k+1}}^{T_k} \|e^{-tA^*}g \|^2_{L^2(\omega)} dt \leq \int_{T_{k+1}}^{T_k} \|e^{-tA^*}g \|^2_{L^2(\omega)} dt.
\end{multline*}
By summing over all the integers $k \in \nn$ and by noticing that $$\lim \limits_{k \to +\infty} f(\tau_k)=0, \quad \lim \limits_{k \to +\infty} T_k= T- \sum_{k \in \nn} \tau_k =0,$$
and $$ \forall k \geq 0, \quad \|e^{-T_k A^*}g\|_{L^2(\rr^d)} \leq \|g\|_{L^2(\rr^d)},$$ by the contraction property of the semigroup $(e^{-tA^*})_{t \geq 0}$, 
it follows that
\begin{equation*}
\|e^{-T A^*}g\|^2_{L^2(\rr^d)} \leq \frac{1}{1-q}\exp\Big(\frac{2K'}{((1-q)T)^{\frac{2r_1}{1-\mu -\delta\nu}}} \Big) \int_{0}^{T} \|e^{-tA^*}g \|^2_{L^2(\omega)} dt.
\end{equation*}
By using anew the contraction property of the semigroup $(e^{-tA^*})_{t \geq 0}$, we deduce that for all $g \in L^2(\rr^d)$, $T \geq t_1$,
\begin{multline*}
\|e^{-T A^*}g\|^2_{L^2(\rr^d)} \leq \|e^{-t_1 A^*}g\|^2_{L^2(\rr^d)} \leq \frac{1}{1-q} \exp\Big(\frac{2K'}{((1-q)t_1)^{\frac{2 r_1}{1-\mu -\delta\nu}}} \Big) \int_{0}^{t_1} \|e^{-tA^*}g \|^2_{L^2(\omega)} dt \\
\leq \frac{1}{1-q} \exp\Big(\frac{2K'}{((1-q)t_1)^{\frac{2 r_1}{1-\mu -\delta\nu}}} \Big) \int_{0}^{T} \|e^{-tA^*}g \|^2_{L^2(\omega)} dt.
\end{multline*}
This ends the proof of Theorem~\ref{observability_result}.

\section{Appendix}\label{appendix}
\subsection{Bernstein type estimates}\label{Hermite_functions}
This section is devoted to the proof of the Bernstein type estimates given in Proposition~\ref{bernstein_estim1}. To that end, we begin by recalling basic facts about Hermite functions. The standard Hermite functions $(\phi_{k})_{k\geq 0}$ are defined for $x \in \rr$,
 \begin{equation*}\label{defi}
 \phi_{k}(x)=\frac{(-1)^k}{\sqrt{2^k k!\sqrt{\pi}}} e^{\frac{x^2}{2}}\frac{d^k}{dx^k}(e^{-x^2})
 =\frac{1}{\sqrt{2^k k!\sqrt{\pi}}} \Bigl(x-\frac{d}{dx}\Bigr)^k(e^{-\frac{x^2}{2}})=\frac{ a_{+}^k \phi_{0}}{\sqrt{k!}},
\end{equation*}
where $a_{+}$ is the creation operator
$$a_{+}=\frac{1}{\sqrt{2}}\Big(x-\frac{d}{dx}\Big).$$
The Hermite functions satisfy the identity 
\begin{equation*}\label{eq2ui1}
\forall k \in \nn, \quad \Big(-\frac{d^2}{dx^2}+x^2\Big)\phi_{k}=(2k+1)\phi_{k}.
\end{equation*}
The family $(\phi_{k})_{k\in \nn}$ is a Hilbert basis of $L^2(\R)$.
We set for $\alpha=(\alpha_{j})_{1\le j\le d}\in\N^d$, $x=(x_{j})_{1\le j\le d}\in \R^d,$
\begin{equation*}\label{jk1}
\Phi_{\alpha}(x)=\prod_{j=1}^d\phi_{\alpha_j}(x_j).
\end{equation*}
The family $(\Phi_{\alpha})_{\alpha \in \nn^d}$ is a Hilbert basis of $L^2(\R^d)$
composed of the eigenfunctions of the $d$-dimensional harmonic oscillator
\begin{equation*}\label{6.harmo}
\mathcal{H}=-\Delta_x+|x|^2=\sum_{k\ge 0}(2k+d)\mathbb P_{k},\quad \text{Id}=\sum_{k \ge 0}\mathbb P_{k},
\end{equation*}
where $\mathbb P_{k}$ is the orthogonal projection onto $\text{Span}_{\cc}
\{\Phi_{\alpha}\}_{\alpha\in \N^d,\val \alpha =k}$, with $\val \alpha=\alpha_{1}+\dots+\alpha_{d}$. 

Instrumental in the sequel are the following basic estimates proved by Beauchard, Jaming and Pravda-Starov in the proof of \cite[Proposition~3.3]{kkj} (formula (3.38)). 
\begin{lemma}\label{lem1}
With $\mathcal{E}_N= \emph{\textrm{Span}}_{\cc}\{\Phi_{\alpha}\}_{\alpha \in \nn^d, \ |\alpha| \leq N}$, we have for all $N \in \nn$, $f \in \mathcal{E}_N$,
\begin{equation*}
\forall (\alpha, \beta) \in \nn^d \times \nn^d, \quad
\|x^{\alpha}\partial_x^{\beta}f\|_{L^2(\rr^d)}\leq 2^{\frac{|\alpha|+|\beta|}{2}}\sqrt{\frac{(N+|\alpha|+|\beta|)!}{N!}}\|f\|_{L^2(\rr^d)}.
\end{equation*}
\end{lemma}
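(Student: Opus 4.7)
The plan is to reduce the estimate to bookkeeping with the one-dimensional creation and annihilation operators. Recall that $a_{+,j}=\tfrac{1}{\sqrt{2}}(x_j-\partial_{x_j})$ and $a_{-,j}=\tfrac{1}{\sqrt{2}}(x_j+\partial_{x_j})$, so that $x_j=\tfrac{1}{\sqrt{2}}(a_{+,j}+a_{-,j})$ and $\partial_{x_j}=\tfrac{1}{\sqrt{2}}(a_{-,j}-a_{+,j})$. Using the tensor structure of $\Phi_\gamma$ together with the one-dimensional identities $a_{+}\phi_k=\sqrt{k+1}\,\phi_{k+1}$ and $a_{-}\phi_k=\sqrt{k}\,\phi_{k-1}$ (the latter being zero when $k=0$), one gets the action on the multi-dimensional Hermite basis:
$$a_{+,j}\Phi_{\gamma}=\sqrt{\gamma_j+1}\,\Phi_{\gamma+e_j}, \qquad a_{-,j}\Phi_{\gamma}=\sqrt{\gamma_j}\,\Phi_{\gamma-e_j},$$
with the convention that the second is zero when $\gamma_j=0$.

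The key step is the following one-operator stability estimate: for every $g\in\mathcal{E}_M$ and every $(\sigma,j)\in\{+,-\}\times\{1,\dots,d\}$, the vector $a_{\sigma,j}g$ still belongs to $\mathcal{E}_{M+1}$ and satisfies $\|a_{\sigma,j}g\|_{L^2(\rr^d)}\le\sqrt{M+1}\,\|g\|_{L^2(\rr^d)}$. To see this, expand $g=\sum_{|\gamma|\le M}c_{\gamma}\Phi_{\gamma}$ and use Parseval, together with the bounds $\gamma_j+1\le|\gamma|+1\le M+1$ for the creation case and $\gamma_j\le M$ for the annihilation case. This is the only genuine analytic input; the rest is combinatorial.

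With the stability estimate secured, I would expand
$$x^{\alpha}\partial_x^{\beta}=\frac{1}{2^{(|\alpha|+|\beta|)/2}}\sum_{\sigma}\eta_{\sigma}\,a_{\sigma_1,i_1}\cdots a_{\sigma_k,i_k},$$
where $k=|\alpha|+|\beta|$, the sum runs over the $2^k$ sign patterns $\sigma\in\{+,-\}^k$ and the $\eta_{\sigma}\in\{-1,+1\}$ record the signs arising from the expansions of the $\partial_{x_j}$ factors. Applying the stability estimate $k$ times in succession to $f\in\mathcal{E}_N$, the function produced after $\ell$ operators lies in $\mathcal{E}_{N+\ell}$ and the $(\ell+1)$-th operator contributes a factor $\sqrt{N+\ell+1}$; by induction each monomial $a_{\sigma_1,i_1}\cdots a_{\sigma_k,i_k}f$ has $L^2$-norm at most $\sqrt{(N+1)(N+2)\cdots(N+k)}\,\|f\|_{L^2(\rr^d)}=\sqrt{(N+k)!/N!}\,\|f\|_{L^2(\rr^d)}$.

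Finally, the triangle inequality applied to the $2^k$ terms compensates exactly for the prefactor $2^{-k/2}$, yielding
$$\|x^{\alpha}\partial_x^{\beta}f\|_{L^2(\rr^d)}\le 2^{k/2}\sqrt{\frac{(N+k)!}{N!}}\,\|f\|_{L^2(\rr^d)},$$
which is the desired bound. I do not foresee any real obstacle: the only care needed is in the induction, namely that each factor must be estimated against the current level $N+\ell$ reached by the partial product, not the original level $N$.
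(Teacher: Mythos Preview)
Your proof is correct. The reduction to ladder operators, the one-step stability estimate $\|a_{\sigma,j}g\|_{L^2}\le\sqrt{M+1}\,\|g\|_{L^2}$ for $g\in\mathcal{E}_M$, the inductive telescoping to $\sqrt{(N+k)!/N!}$, and the final triangle-inequality count over the $2^k$ sign patterns are all handled cleanly; the point you flag about tracking the running level $N+\ell$ rather than $N$ is exactly the only place one could slip, and you have it right.

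As for comparison: the paper does not actually prove this lemma. It quotes the estimate from Beauchard--Jaming--Pravda-Starov \cite{kkj} (formula~(3.38) in the proof of their Proposition~3.3) and moves on. The argument there is precisely the ladder-operator bookkeeping you wrote down, so your proposal is not a different route but rather a reconstruction of the standard proof that the paper chose to cite instead of reproduce.
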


We can now prove Proposition~\ref{bernstein_estim1}. Let $\Theta : [0,+\infty) \longrightarrow [0,+\infty)$ be a non-negative continuous function such that the associated sequence $(M_p)_{p \in \nn}$ in \eqref{lc_sequence} satisfies the assumptions $(H1)$ and $(H2)$. Let $f \in GS_{\Theta}$, $0< s \leq 1$ and $(\alpha, \beta) \in \nn^d \times \nn^d$. We begin by proving that there exist some positive constants $C'_{\Theta}>0$, $\tilde{C}_{\Theta}>0$, independent on $f$, $\alpha$ and $\beta$ such that
\begin{equation*}\label{gse_1709}
\| x^{\alpha} \partial_x^{\beta} f \|_{L^2(\rr^d)} \leq C'_{\Theta} \tilde{C}^{|\alpha|+|\beta|}_{\Theta}  \Big(M_{\left\lfloor \frac{|\alpha| +|\beta|+(2-s)(d+1)}{2s} \right\rfloor +1}\Big)^s \|f\|_{GS_{\Theta}}.
\end{equation*}
It is sufficient to prove that there exist some positive constants $C'_{\Theta}>0$, $\tilde{C}_{\Theta}>0$, independent on $f$, $\alpha$ and $\beta$ such that for all $N \geq |\alpha|+|\beta|+1$,
\begin{equation}\label{bernst_goal}
\|x^{\alpha} \partial_x^{\beta} \pi_Nf \|_{L^2(\rr^d)} \leq C'_{\Theta} \tilde{C}^{|\alpha|+|\beta|}_{\Theta}  \Big(M_{\left\lfloor \frac{|\alpha| +|\beta|+(2-s)(d+1)}{2s} \right\rfloor +1}\Big)^s \|f\|_{GS_{\Theta}},
\end{equation}
with $\pi_N f$ the orthogonal projection of the function $f$ onto the space $\textrm{Span}_{\cc}\{\Phi_{\alpha}\}_{\alpha \in \nn^d, \ |\alpha| \leq N}$ given by
\begin{equation}\label{orth_proj}
\pi_N f = \sum_{\substack{\alpha \in \nn^d, \\ |\alpha| \leq N}} \left\langle f, \Phi_{\alpha} \right\rangle_{L^2(\rr^d)} \Phi_{\alpha}.
\end{equation}
Indeed, by using that $(\pi_N f)_{N \in \nn}$ converges to $f$ in $L^2(\rr^d)$ and therefore in $\mathcal{D}'(\rr^d)$, we obtain that the sequence $(x^{\alpha} \partial^{\beta}_x \pi_{N} f)_{N\in \nn}$ converges to $x^{\alpha}\partial^{\beta}_x f$ in $\mathcal{D}'(\rr^d)$. If the estimates \eqref{bernst_goal} hold, the sequence $(x^{\alpha} \partial^{\beta}_x \pi_{N} f)_{N\in \nn}$ is bounded in $L^2(\rr^d)$ and therefore weakly converges (up to a subsequence) to a limit $g \in L^2(\rr^d)$. Thanks to the uniqueness of the limit in $\mathcal{D}'(\rr^d)$, it follows that $g=x^{\alpha} \partial^{\beta}_x f \in L^2(\rr^d)$. Moreover, we have
\begin{equation*}
\|x^{\alpha} \partial_x^{\beta} f \|_{L^2(\rr^d)} \leq \liminf_{N \to +\infty} \|x^{\alpha} \partial_x^{\beta} \pi_{\phi(N)} f \|_{L^2(\rr^d)} \leq C'_{\Theta} \tilde{C}^{|\alpha|+|\beta|}_{\Theta}  \Big(M_{\left\lfloor \frac{|\alpha| +|\beta|+(2-s)(d+1)}{2s} \right\rfloor +1}\Big)^s\|f\|_{GS_{\Theta}}. 
\end{equation*}
Let us prove that the estimates \eqref{bernst_goal} hold. Since $\pi_{|\alpha|+|\beta|}$ is an orthogonal projection on $L^2(\rr^d)$ and therefore satisfies $$\| \pi_{|\alpha|+|\beta|} f \|_{L^2(\rr^d)} \leq \| f \|_{L^2(\rr^d)},$$ we deduce from Lemma~\ref{lem1} and \eqref{orth_proj} that for all $N \geq |\alpha|+ |\beta|+1$,
\begin{align*}
& \quad  \|x^{\alpha}\partial_x^{\beta} \pi_N f \|_{L^2} \leq  \|x^{\alpha}\partial_x^{\beta} \pi_{|\alpha|+|\beta|} f \|_{L^2}  + \|x^{\alpha}\partial_x^{\beta} (\pi_{N}-\pi_{|\alpha|+|\beta|}) f \|_{L^2}   \\ \notag
& \leq 2^{\frac{|\alpha|+|\beta|}{2}}\sqrt{\frac{(2(|\alpha|+|\beta|))!}{(|\alpha|+|\beta|)!}} \|f\|_{L^2(\rr^d)}  + \sum_{\substack{\gamma \in \nn^d, \\ |\alpha|+|\beta|+1 \leq |\gamma| \leq N}} |\left\langle f, \Phi_{\gamma} \right\rangle_{L^2}| \|x^{\alpha} \partial_x^{\beta} \Phi_{\gamma} \|_{L^2(\rr^d)}.
\end{align*}
By using anew Lemma~\ref{lem1}, it follows that for all $N \geq |\alpha|+ |\beta|+1$,
\begin{align}\label{gse_1709}
& \|x^{\alpha}\partial_x^{\beta} \pi_N f \|_{L^2} \\ \notag 
& \leq 2^{|\alpha|+|\beta|}(|\alpha|+|\beta|)^{\frac{|\alpha|+|\beta|}{2}} \|f\|_{L^2(\rr^d)}  + \sum_{\substack{\gamma \in \nn^d, \\ |\alpha|+|\beta|+1 \leq |\gamma| \leq N}} |\left\langle f, \Phi_{\gamma} \right\rangle_{L^2}|  2^{\frac{|\alpha|+|\beta|}{2}}\sqrt{\frac{(|\gamma|+|\alpha|+|\beta|)!}{|\gamma|!}} \\ \notag
& \leq 2^{|\alpha|+|\beta|}(|\alpha|+|\beta|)^{\frac{|\alpha|+|\beta|}{2}} \|f\|_{L^2(\rr^d)}  + \sum_{\substack{\gamma \in \nn^d, \\ |\alpha|+|\beta|+1 \leq |\gamma| \leq N}} |\left\langle f, \Phi_{\gamma} \right\rangle_{L^2}|  2^{|\alpha|+|\beta|} |\gamma|^{\frac{|\alpha|+|\beta|}{2}}.
\end{align}
On the first hand, it follows from $0< s \leq 1$ that for all $N \geq |\alpha|+ |\beta|+1$,
\begin{align}\label{gse_2}
& \quad \sum_{\substack{\gamma \in \nn^d, \\ |\alpha|+|\beta|+1 \leq |\gamma| \leq N}} |\left\langle f, \Phi_{\gamma} \right\rangle_{L^2}| |\gamma|^{\frac{|\alpha|+|\beta|}{2}} \\ \notag
&\leq \sum_{\substack{\gamma \in \nn^d, \\ |\gamma| \geq 1}} |\left\langle f, \Phi_{\gamma} \right\rangle_{L^2}| e^{s\Theta(|\gamma|)} |\gamma|^{-\frac{(2-s)(d+1)}{2}} |\gamma|^{\frac{|\alpha|+|\beta|+(2-s)(d+1)}{2}}e^{-s\Theta(|\gamma|)} \\ \notag
&\leq \Big(M_{\left\lfloor \frac{|\alpha| +|\beta|+(2-s)(d+1)}{2s} \right\rfloor +1}\Big)^s \sum_{\substack{\gamma \in \nn^d, \\ |\gamma| \geq 1}} |\left\langle f, \Phi_{\gamma} \right\rangle_{L^2}| e^{s\Theta(|\gamma|)} |\gamma|^{-\frac{(2-s)(d+1)}{2}} \\ \notag
&\leq \Big(M_{\left\lfloor \frac{|\alpha| +|\beta|+(2-s)(d+1)}{2s} \right\rfloor +1}\Big)^s \vert|\big(\left\langle f, \Phi_{\gamma} \right\rangle_{L^2}\big)_{\gamma \in \nn^d} \vert|_{l^{\infty}(\nn^d)}^{1-s} \sum_{\substack{\gamma \in \nn^d, \\ |\gamma| \geq 1}} \Big(|\left\langle f, \Phi_{\gamma} \right\rangle_{L^2}| e^{\Theta(|\gamma|)}\Big)^s |\gamma|^{-\frac{(2-s)(d+1)}{2}}.
\end{align}
H\"older's inequality implies that for all $0 < s \leq 1$,
\begin{equation}\label{holder}
 \sum_{\substack{\gamma \in \nn^d, \\ |\gamma| \geq 1}} \Big(|\left\langle f, \Phi_{\gamma} \right\rangle_{L^2}| e^{\Theta(|\gamma|)}\Big)^s |\gamma|^{-\frac{(2-s)(d+1)}{2}} \leq D_{d,s}\left\|\Big(e^{\Theta(|\gamma|)}\left\langle f, \Phi_{\gamma} \right\rangle_{L^2}\Big)_{\gamma \in \nn^d} \right\|_{l^{2}(\nn^d)}^{s},
\end{equation}
 with 
\begin{equation*}
D_{d,s} =\Big(\sum_{\substack{\gamma \in \nn^d, \\ |\gamma| \geq 1}} |\gamma|^{-(d+1)}\Big)^{1-\frac{s}{2}} < +\infty.
\end{equation*}
Since $\Theta(|\gamma|) \geq 0$ for all $\gamma \in \nn^d$, it follows that
\begin{equation}\label{infnorm1709}
\vert|\big(\left\langle f, \Phi_{\gamma} \right\rangle_{L^2}\big)_{\gamma \in \nn^d} \vert|_{l^{\infty}(\nn^d)} \leq \vert|\big(\left\langle f, \Phi_{\gamma} \right\rangle_{L^2} e^{\Theta(|\gamma|)}\big)_{\gamma \in \nn^d} \vert|_{l^{\infty}(\nn^d)} \leq \| f \|_{GS_{\Theta}}.
\end{equation}
We deduce from \eqref{gse_2}, \eqref{holder} and \eqref{infnorm1709} that
\begin{equation}\label{gse_3}
\sum_{\substack{\gamma \in \nn^d, \\ |\alpha|+|\beta|+1 \leq |\gamma| \leq N}} |\left\langle f, \Phi_{\gamma} \right\rangle_{L^2}| |\gamma|^{\frac{|\alpha|+|\beta|}{2}}
 \leq D_{d,s} \Big(M_{\left\lfloor \frac{|\alpha| +|\beta|+(2-s)(d+1)}{2s} \right\rfloor +1}\Big)^s \|f\|_{GS_{\Theta}}.
 \end{equation}
On the other hand, the assumption $\text{(H2)}$ implies that there exist $C_{\Theta}\geq 1$ and $L_{\Theta}\geq 1$ such that if $|\alpha|+|\beta| \geq 1$ then
\begin{align}\label{gse_4}
(|\alpha|+|\beta|)^{\frac{|\alpha|+|\beta|}{2}} & = (2s)^{\frac{|\alpha|+|\beta|}{2}} \Big(\frac{|\alpha|+|\beta|}{2s} \Big)^{s \frac{|\alpha|+|\beta|}{2s}} \\ \notag
& \leq (2s)^{\frac{|\alpha|+|\beta|}{2}} \Big(\left\lfloor \frac{|\alpha|+|\beta|}{2s} \right\rfloor +1\Big)^{s \Big(\left\lfloor \frac{|\alpha|+|\beta|}{2s}\right\rfloor +1\Big)} \\ \notag
& \leq (2s)^{\frac{|\alpha|+|\beta|}{2}} C^s_{\Theta}L^s_{\Theta} L_{\Theta}^{\frac{|\alpha|+|\beta|}{2}} \Big(M_{\left\lfloor \frac{|\alpha| +|\beta|}{2s} \right\rfloor +1}\Big)^s.
\end{align}
The last inequality holds true as well when $|\alpha| +|\beta|=0$ with the convention $0^0=1$ since $C_{\Theta} M_1 \geq 1$ and $L_{\Theta} \geq 1$.
The logarithmical convexity of the sequence $(M_p)_{p \in \nn}$ gives
\begin{equation*}\label{log_conv0}
\forall p \in \nn, \quad M_p \leq \frac{M_{0}}{M_1} M_{p+1}
\end{equation*}
and therefore,
\begin{equation*}\label{log_conv}
\forall 0 \leq p \leq q, \quad M_p \leq \Big(\frac{M_{0}}{M_1}\Big)^{q-p} M_{q}.
\end{equation*}
By using this estimate together with the following elementary inequality 
\begin{equation*}
\forall x,y \geq 0, \quad \lfloor x +y \rfloor \leq \lfloor x \rfloor + \lfloor y \rfloor +1,
\end{equation*}
we obtain
\begin{equation}\label{log_conv2}
\forall 0 \leq r \leq r', \quad M_{\lfloor r \rfloor} \leq \max\Big(1,\frac{M_{0}}{M_1}\Big)^{\lfloor r'-r \rfloor+1} M_{\lfloor r' \rfloor}.
\end{equation}
It follows from \eqref{gse_4} and \eqref{log_conv2} that
\begin{align}\label{gse_5}
(|\alpha|+|\beta|)^{\frac{|\alpha|+|\beta|}{2}}
& \leq C_{\Theta}^s L_{\Theta}^s \Big(\sqrt{2s L_{\Theta}} \Big)^{|\alpha|+|\beta|} \max\Big(1,\frac{M_0}{M_1}\Big)^{s (\left\lfloor \frac{(2-s)(d+1)}{2s} \right\rfloor +1)}\Big(M_{\left\lfloor \frac{|\alpha| +|\beta|+(2-s)(d+1)}{2s} \right\rfloor +1}\Big)^s \\ \notag
& \leq  C_{\Theta}^s L_{\Theta}^s \Big(\sqrt{2s L_{\Theta}} \Big)^{|\alpha|+|\beta|} \max\Big(1,\frac{M_0}{M_1}\Big)^{d+2} \Big(M_{\left\lfloor \frac{|\alpha| +|\beta|+(2-s)(d+1)}{2s} \right\rfloor +1}\Big)^s,
\end{align}
since $0< s \leq 1$.
We deduce from \eqref{gse_1709}, \eqref{gse_3} and \eqref{gse_5} that for all $N \geq |\alpha|+|\beta|+1$,

\begin{equation*}
\| x^{\alpha} \partial_x^{\beta} \pi_N f \|_{L^2(\rr^d)} \leq K_{\Theta, s} K'^{|\alpha|+|\beta|}_{\Theta,s} \Big(M_{\left\lfloor \frac{|\alpha| +|\beta|+(2-s)(d+1)}{2s} \right\rfloor +1}\Big)^s \|f\|_{GS_{\Theta}},
\end{equation*}
with $K_{\Theta,s} = D_{d,s} +C^s_{\Theta} L^s_{\Theta} \max\Big(1,\frac{M_0}{M_1}\Big)^{d+2}\geq 1$ and $K'_{\Theta,s} = 2 \max(1, \sqrt{2sL_{\Theta}}) \geq 1$.
This implies that $f \in \mathscr{S}(\rr^d)$ and for all $\alpha, \beta \in \nn^d$,
\begin{equation}\label{gse_6}
\| x^{\alpha} \partial_x^{\beta} f \|_{L^2(\rr^d)} \leq K_{\Theta, s} K'^{|\alpha|+|\beta|}_{\Theta,s}  \Big(M_{\left\lfloor \frac{|\alpha| +|\beta|+(2-s)(d+1)}{2s} \right\rfloor +1}\Big)^s \|f\|_{GS_{\Theta}}.
\end{equation}
By using Newton formula, we obtain that for all $k \in \nn$,
\begin{multline*}
\|\left\langle x\right\rangle^k \partial_x^\beta f \|_{L^2(\rr^d)}^2 = \int_{\rr^d} \Big( 1 + \sum \limits_{i=1}^d {x_i^2} \Big)^k |\partial_x^\beta f(x) |^2 dx \\ = \int_{\rr^d} \sum_{\substack{\gamma \in \nn^{d+1}, \\ |\gamma|=k}} \frac{k!}{\gamma !} x^{2 \tilde{\gamma}} |\partial_x^\beta f(x) |^2 dx 
=\sum_{\substack{\gamma \in \nn^{d+1}, \\ |\gamma|=k}} \frac{k!}{\gamma !} \|x^{\tilde{\gamma}} \partial_x^\beta f \|_{L^2(\rr^d)}^2 ,
\end{multline*}
where we denote $\tilde{\gamma}=(\gamma_1,...,\gamma_d) \in \nn^d$ if $\gamma=(\gamma_1,...\gamma_{d+1}) \in \nn^{d+1}$. It follows from \eqref{log_conv2} and \eqref{gse_6} that for all $k \in \nn$ and $\beta \in \nn^d$,
\begin{align}\label{gse7}
\|\left\langle x\right\rangle^k \partial_x^\beta f \|_{L^2(\rr^d)}^2 \leq & \ \sum_{\substack{\gamma \in \nn^{d+1}, \\ |\gamma|=k}} \frac{k!}{\gamma !} K^2_{\Theta,s} K'^{2(|\tilde{\gamma}|+|\beta|)}_{\Theta, s} \Big(M_{\left\lfloor \frac{|\tilde{\gamma}| +|\beta|+(2-s)(d+1)}{2s} \right\rfloor +1}\Big)^{2s} \|f\|^2_{GS_{\Theta}} \\ \nonumber
\leq & \ \sum_{\substack{\gamma \in \nn^{d+1}, \\ |\gamma|=k}} \frac{k!}{\gamma !} K^2_{\Theta, s} K'^{2(k+|\beta|)}_{\Theta, s} \max\Big(1,\frac{M_0}{M_1}\Big)^{k-|\tilde{\gamma}|+2} \Big(M_{\left\lfloor \frac{k +|\beta|+(2-s)(d+1)}{2s} \right\rfloor +1}\Big)^{2s} \|f\|^2_{GS_{\Theta}} \\ \nonumber
\leq & K^2_{\Theta, s} (d+1)^k \max\Big(1,\frac{M_0}{M_1}\Big)^{k+2}K'^{2(k+|\beta|)}_{\Theta, s} \Big(M_{\left\lfloor \frac{k +|\beta|+(2-s)(d+1)}{2s} \right\rfloor +1}\Big)^{2s} \|f\|^2_{GS_{\Theta}},
\end{align}
since 
\begin{equation*}
\sum_{\substack{\gamma \in \nn^{d+1}, \\ |\gamma|=k}} \frac{k!}{\gamma !}=(d+1)^k,
\end{equation*}
thanks to Newton formula.
Let $r \in \rr_+^* \setminus \nn$. There exist $0 < \theta < 1$ and $k \in \nn$ such that 
\begin{equation*}\label{floor}
r= \theta k + (1- \theta)(k+1).
\end{equation*}
By using Hölder inequality, it follows from \eqref{gse7} that
\begin{multline}\label{holder1709}
\|\left\langle x\right\rangle^r \partial_x^\beta f \|_{L^2(\rr^d)}  \leq \|\langle x\rangle^k \partial_x^\beta f\|_{L^2(\rr^d)}^{\theta}\|\langle x\rangle^{k+1} \partial_x^\beta f\|_{L^2(\rr^d)}^{1-\theta} \\
 \leq K_{\Theta, s} (d+1)^{\frac{r}{2}} \max\Big(1,\frac{M_0}{M_1}\Big)^{\frac{r}{2}+1}K'^{r+|\beta|}_{\Theta, s}\Big(M_{\left\lfloor \frac{k +|\beta|+(2-s)(d+1)}{2s} \right\rfloor +1}\Big)^{s \theta} \Big(M_{\left\lfloor \frac{k+1 +|\beta|+(2-s)(d+1)}{2s} \right\rfloor +1}\Big)^{s(1-\theta)} \|f\|_{GS_{\Theta}}.
\end{multline}
By using anew \eqref{log_conv2}, we have
\begin{equation*}
M_{\left\lfloor \frac{k +|\beta|+(2-s)(d+1)}{2s} \right\rfloor +1} \leq \max\Big(1,\frac{M_0}{M_1}\Big)^{\frac{r+1-k}{2s}+1} M_{\left\lfloor \frac{r+1 +|\beta|+(2-s)(d+1)}{2s} \right\rfloor +1}
\end{equation*} 
and
\begin{equation*}
M_{\left\lfloor \frac{k+1 +|\beta|+(2-s)(d+1)}{2s} \right\rfloor +1} \leq \max\Big(1,\frac{M_0}{M_1}\Big)^{\frac{r-k}{2s}+1} M_{\left\lfloor \frac{r+1 +|\beta|+(2-s)(d+1)}{2s} \right\rfloor +1},
\end{equation*}
since $k \leq r$.
We deduce from \eqref{holder1709} that
\begin{align*}
\|\left\langle x\right\rangle^r \partial_x^\beta f \|_{L^2(\rr^d)} & \leq K_{\Theta, s} \max\Big(1,\frac{M_0}{M_1}\Big)^{\frac{2r+\theta-k}{2}+1+s} (d+1)^{\frac{r}{2}} K'^{r+|\beta|}_{\Theta, s} \Big(M_{\left\lfloor \frac{r+1 +|\beta|+(2-s)(d+1)}{2s} \right\rfloor +1}\Big)^s \|f\|_{GS_{\Theta}} \\
& \leq K_{\Theta, s} \max\Big(1,\frac{M_0}{M_1}\Big)^{\frac{r}{2}+3} (d+1)^{\frac{r}{2}} K'^{r+|\beta|}_{\Theta, s} \Big(M_{\left\lfloor \frac{r+1 +|\beta|+(2-s)(d+1)}{2s} \right\rfloor +1}\Big)^s \|f\|_{GS_{\Theta}}, 
\end{align*}
since $0<s\leq 1$, $k \leq r < k+1$ and $0< \theta <1$.
Let us notice that the above inequality also holds for $r \in \nn$. Indeed, it follows from \eqref{log_conv2} and \eqref{gse7} that
\begin{align*}
\|\left\langle x\right\rangle^k \partial_x^\beta f \|_{L^2(\rr^d)} \leq & K_{\Theta, s} (d+1)^{\frac{k}{2}} \max\Big(1,\frac{M_0}{M_1}\Big)^{\frac{k}{2}+1}K'^{k+|\beta|}_{\Theta, s} \Big(M_{\left\lfloor \frac{k +|\beta|+(2-s)(d+1)}{2s} \right\rfloor +1}\Big)^{s} \|f\|_{GS_{\Theta}} \\
\leq &K_{\Theta, s} (d+1)^{\frac{k}{2}} \max\Big(1,\frac{M_0}{M_1}\Big)^{\frac{k}{2}+1+\frac{1}{2}+1}K'^{k+|\beta|}_{\Theta, s} \Big(M_{\left\lfloor \frac{k+1 +|\beta|+(2-s)(d+1)}{2s} \right\rfloor +1}\Big)^{s} \|f\|_{GS_{\Theta}}\\
\leq &K_{\Theta, s} (d+1)^{\frac{k}{2}} \max\Big(1,\frac{M_0}{M_1}\Big)^{\frac{k}{2}+3}K'^{k+|\beta|}_{\Theta, s} \Big(M_{\left\lfloor \frac{k+1 +|\beta|+(2-s)(d+1)}{2s} \right\rfloor +1}\Big)^{s} \|f\|_{GS_{\Theta}}.
\end{align*}
This ends the proof of Proposition~\ref{bernstein_estim1}.

\subsection{Gelfand-Shilov regularity}\label{gelfand}

We refer the reader to the works~\cite{gelfand_shilov,rodino1,rodino,toft} and the references herein for extensive expositions of the Gelfand-Shilov regularity theory.
The Gelfand-Shilov spaces $S_{\nu}^{\mu}(\rr^d)$, with $\mu,\nu>0$, $\mu+\nu\geq 1$, are defined as the spaces of smooth functions $f \in C^{\infty}(\rr^d)$ satisfying the estimates
$$\exists A,C>0, \quad |\partial_x^{\alpha}f(x)| \leq C A^{|\alpha|}(\alpha !)^{\mu}e^{-\frac{1}{A}|x|^{1/\nu}}, \quad x \in \rr^d, \ \alpha \in \mathbb{N}^d,$$
or, equivalently
$$\exists A,C>0, \quad \sup_{x \in \rr^d}|x^{\beta}\partial_x^{\alpha}f(x)| \leq C A^{|\alpha|+|\beta|}(\alpha !)^{\mu}(\beta !)^{\nu}, \quad \alpha, \beta \in \mathbb{N}^d,$$
with $\alpha!=(\alpha_1!)...(\alpha_d!)$ if $\alpha=(\alpha_1,...,\alpha_d) \in \nn^d$.
These Gelfand-Shilov spaces  $S_{\nu}^{\mu}(\rr^d)$ may also be characterized as the spaces of Schwartz functions $f \in \mathscr{S}(\rr^d)$ satisfying the estimates
$$\exists C>0, \eps>0, \quad |f(x)| \leq C e^{-\eps|x|^{1/\nu}}, \quad x \in \rr^d; \qquad |\widehat{f}(\xi)| \leq C e^{-\eps|\xi|^{1/\mu}}, \quad \xi \in \rr^d.$$
In particular, we notice that Hermite functions belong to the symmetric Gelfand-Shilov space  $S_{1/2}^{1/2}(\rr^d)$. More generally, the symmetric Gelfand-Shilov spaces $S_{\mu}^{\mu}(\rr^d)$, with $\mu \geq 1/2$, can be nicely characterized through the decomposition into the Hermite basis $(\Phi_{\alpha})_{\alpha \in \mathbb{N}^d}$, see e.g. \cite[Proposition~1.2]{toft},
\begin{multline*}
f \in S_{\mu}^{\mu}(\rr^d) \Leftrightarrow f \in L^2(\rr^d), \ \exists t_0>0, \ \big\|\big(\langle f,\Phi_{\alpha}\rangle_{L^2}\exp({t_0|\alpha|^{\frac{1}{2\mu}})}\big)_{\alpha \in \mathbb{N}^d}\big\|_{l^2(\mathbb{N}^d)}<+\infty\\
\Leftrightarrow f \in L^2(\rr^d), \ \exists t_0>0, \ \|e^{t_0\mathcal{H}^{\frac{1}{2\mu}}}f\|_{L^2(\rr^d)}<+\infty,
\end{multline*}
where $\mathcal{H}=-\Delta_x+|x|^2$ stands for the harmonic oscillator.
We end this section by proving two technical lemmas:

\begin{lemma}\label{croch}
Let $\mu, \nu >0$ such that $\mu+\nu \geq 1$, $C>0$ and $A \geq 1$. If $f \in S_{\nu}^{\mu}(\rr^d)$ satisfies
\begin{equation}\label{gs_estim}
\forall \alpha \in \nn^d, \forall \beta \in \nn^d, \quad \| x^{\alpha} \partial_x^{\beta} f \|_{L^2(\rr^d)} \leq C A^{|\alpha|+|\beta|} (\alpha!)^{\nu} (\beta!)^{\mu},
\end{equation}
then, it satisfies 
\begin{equation*}
\forall p \in \nn, \forall \beta \in \nn^d, \quad \|\langle x \rangle^{p} \partial_x^{\beta} f \|_{L^2(\rr^d)} \leq C (d+1)^{\frac{p}{2}}A^{p+|\beta|} (p!)^{\nu} (|\beta|!)^{\mu}.
\end{equation*}
\end{lemma}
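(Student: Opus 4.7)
The plan is to reduce the estimate on $\|\langle x \rangle^p \partial_x^\beta f\|_{L^2}$ to the hypothesis \eqref{gs_estim} by expanding $\langle x \rangle^{2p} = (1 + x_1^2 + \cdots + x_d^2)^p$ via the multinomial theorem, exactly in the spirit of the calculation already carried out in the proof of Proposition~\ref{bernstein_estim1} (the passage just before \eqref{gse7}). This is a bookkeeping lemma, and the main (minor) obstacle is only to carefully track constants so that the factor $(d+1)^{p/2}$ appears and so that $(\beta!)^{\mu}$ can ultimately be replaced by $(|\beta|!)^{\mu}$.

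First I would apply the Newton multinomial formula to get
\begin{equation*}
\|\langle x \rangle^p \partial_x^{\beta} f\|_{L^2(\rr^d)}^2 = \int_{\rr^d} \Bigl(1+\sum_{i=1}^d x_i^2\Bigr)^p |\partial_x^{\beta}f(x)|^2\, dx = \sum_{\substack{\gamma\in\nn^{d+1}\\ |\gamma|=p}} \frac{p!}{\gamma!}\, \|x^{\tilde{\gamma}}\partial_x^{\beta}f\|_{L^2(\rr^d)}^2,
\end{equation*}
where $\tilde{\gamma}=(\gamma_1,\dots,\gamma_d)\in \nn^d$ whenever $\gamma=(\gamma_1,\dots,\gamma_{d+1})\in\nn^{d+1}$. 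Next, I would plug in the hypothesis \eqref{gs_estim} on each term, observing that $|\tilde{\gamma}|\leq |\gamma|=p$, that $\tilde{\gamma}!\leq |\tilde{\gamma}|!\leq p!$, and that $A\geq 1$, which together yield
\begin{equation*}
\|x^{\tilde{\gamma}}\partial_x^{\beta}f\|_{L^2(\rr^d)}^2 \leq C^2 A^{2(p+|\beta|)} (p!)^{2\nu}(\beta!)^{2\mu}.
\end{equation*}

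Then I would recognize the sum over the multinomial coefficients as
\begin{equation*}
\sum_{\substack{\gamma\in\nn^{d+1}\\ |\gamma|=p}}\frac{p!}{\gamma!} = (d+1)^p,
\end{equation*}
by Newton's formula applied to $(1+1+\cdots+1)^p$ with $d+1$ ones. Combining the previous two displays gives
\begin{equation*}
\|\langle x\rangle^p \partial_x^{\beta}f\|_{L^2(\rr^d)}^2 \leq C^2 (d+1)^p A^{2(p+|\beta|)} (p!)^{2\nu}(\beta!)^{2\mu},
\end{equation*}
and I would conclude by taking square roots and using $\beta!\leq |\beta|!$ to replace $(\beta!)^{\mu}$ by $(|\beta|!)^{\mu}$. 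No genuine difficulty arises; the lemma is a direct computation.
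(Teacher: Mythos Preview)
Your proof is correct and essentially identical to the paper's own argument: both expand $\langle x\rangle^{2p}$ via the multinomial formula, apply the hypothesis termwise using $\tilde{\gamma}!\leq |\tilde{\gamma}|!\leq p!$ and $A\geq 1$, sum the multinomial coefficients to $(d+1)^p$, and finish with $\beta!\leq |\beta|!$.
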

\begin{proof}
Let $f \in S_{\nu}^{\mu}(\rr^d)$ satisfying the estimates \eqref{gs_estim}. By using Newton formula, we obtain that for all $p \in \nn$, $\beta \in \nn^d$,
\begin{multline}\label{croch_estim}
\|\langle x \rangle^p \partial_x^{\beta} f \|^2_{L^2(\rr^d)} = \int_{\rr^d} \Big(1+\sum_{i=1}^d x_i^2 \Big)^p |\partial_x^{\beta}f(x)|^2 dx \\
= \int_{\rr^d} \sum_{\substack{\gamma \in \nn^{d+1}, \\ |\gamma|=p}} \frac{p!}{\gamma!} x^{2\tilde{\gamma}} |\partial_x^{\beta}f(x)|^2 dx = \sum_{\substack{\gamma \in \nn^{d+1}, \\ |\gamma|=p}} \frac{p!}{\gamma!} \|x^{\tilde{\gamma}} \partial_x^{\beta} f \|^2_{L^2(\rr^d)},
\end{multline}
where we denote $\tilde{\gamma}=(\gamma_1,...,\gamma_d) \in \nn^d$ if $\gamma=(\gamma_1,...,\gamma_{d+1}) \in \nn^{d+1}$. Since for all $\alpha \in \nn^d$, $\alpha! \leq (|\alpha|)!$, it follows from \eqref{gs_estim} and \eqref{croch_estim} that
\begin{align*}
\|\langle x \rangle^p \partial_x^{\beta} f \|^2_{L^2(\rr^d)} & \leq C^2\sum_{\substack{\gamma \in \nn^{d+1}, \\ |\gamma|=p}} \frac{p!}{\gamma!} A^{2(|\tilde{\gamma}|+|\beta|)} (|\tilde{\gamma}|!)^{2\nu} (|\beta|!)^{2\mu} \\
& \leq C^2 (d+1)^p A^{2(p+|\beta|)} (p!)^{2\nu} (|\beta|!)^{2\mu},
\end{align*}
since $$ \sum_{\substack{\gamma \in \nn^{d+1}, \\ |\gamma|=p}} \frac{p!}{\gamma!}  = (d+1)^p.$$
\end{proof}
\begin{lemma}\label{interpolation}
Let $\mu, \nu >0$ such that $\mu+\nu \geq 1$, $0 \leq \delta \leq 1$, $C>0$ and $A \geq 1$. If $f \in S_{\nu}^{\mu}(\rr^d)$ satisfies
\begin{equation}\label{int}
\forall p \in \nn, \forall \beta \in \nn^d, \quad \|\langle x \rangle^p \partial_x^{\beta} f \|_{L^2(\rr^d)} \leq C A^{p+|\beta|} (p!)^{\nu} (|\beta|!)^{\mu},
\end{equation}
then, it satisfies 
\begin{equation*}
\forall p \in \nn, \forall \beta \in \nn^d, \quad \|\langle x \rangle^{\delta p} \partial_x^{\beta} f \|_{L^2(\rr^d)} \leq C(8^{\nu} e^{\nu}A)^{p+|\beta|} (p!)^{\delta \nu} (|\beta|!)^{\mu}.
\end{equation*}
\end{lemma}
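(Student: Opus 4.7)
The strategy is a single application of Hölder's inequality: for each fixed $\beta \in \nn^d$, I would interpolate the weighted $L^2$-norm $\|\langle x\rangle^{\delta p}\partial_x^\beta f\|_{L^2}$ between the weighted bound $\|\langle x\rangle^p \partial_x^\beta f\|_{L^2}$ granted by the hypothesis \eqref{int} and the unweighted bound $\|\partial_x^\beta f\|_{L^2}$ obtained from \eqref{int} with $p=0$. The endpoint cases $\delta = 0$ and $\delta = 1$ are handled separately and follow directly from \eqref{int} applied at $(0,\beta)$ and $(p,\beta)$ respectively.

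For $\delta \in (0,1)$, I would write the pointwise identity
\begin{equation*}
\langle x\rangle^{2\delta p}|\partial_x^\beta f(x)|^2 = \bigl(\langle x\rangle^{2p}|\partial_x^\beta f(x)|^2\bigr)^\delta \cdot \bigl(|\partial_x^\beta f(x)|^2\bigr)^{1-\delta},
\end{equation*}
and then integrate via Hölder's inequality with conjugate exponents $1/\delta$ and $1/(1-\delta)$. This yields the log-convexity estimate
\begin{equation*}
\|\langle x\rangle^{\delta p}\partial_x^\beta f\|_{L^2}^2 \leq \|\langle x\rangle^p \partial_x^\beta f\|_{L^2}^{2\delta} \cdot \|\partial_x^\beta f\|_{L^2}^{2(1-\delta)}.
\end{equation*}
Plugging in the two instances of \eqref{int} and extracting the square root, one obtains
\begin{equation*}
\|\langle x\rangle^{\delta p}\partial_x^\beta f\|_{L^2} \leq C\, A^{\delta p + |\beta|}\, (p!)^{\delta\nu}\,(|\beta|!)^\mu,
\end{equation*}
which, since $A \geq 1$ and $\delta p \leq p$, is already bounded above by $C A^{p+|\beta|}(p!)^{\delta\nu}(|\beta|!)^\mu$. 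In particular, the constant $8^\nu e^\nu A$ appearing in the conclusion is not needed.

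I do not foresee a serious obstacle: the factorization and Hölder step is elementary, and the only minor technicality is treating the endpoint exponents separately, where Hölder formally degenerates but the inequality is immediate. The larger constant $8^\nu e^\nu$ stated in the lemma is presumably an artefact of an alternative approach based on interpolating between the integer-weighted norms $\|\langle x\rangle^{m}\partial_x^\beta f\|_{L^2}$ and $\|\langle x\rangle^{m+1}\partial_x^\beta f\|_{L^2}$ with $m=\lfloor\delta p\rfloor$, and then comparing $((m+1)!)^\nu$ to $(p!)^{\delta\nu}$ through Stirling-type estimates, which costs an exponential factor of the form $(Ce)^{\nu p}$. The direct $L^2$-Hölder argument sketched above bypasses this combinatorial comparison altogether and gives a sharper constant.
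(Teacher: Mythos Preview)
Your argument is correct and, as you suspected, it is genuinely different from the paper's route. The paper first establishes a bound for \emph{arbitrary} real weights $r\geq 0$ by interpolating between the adjacent integer weights $\lfloor r\rfloor$ and $\lfloor r\rfloor+1$; this produces the intermediate estimate $\|\langle x\rangle^r\partial_x^\beta f\|_{L^2}\leq C A^{r+|\beta|}(r+1)^{(r+1)\nu}(|\beta|!)^\mu$, and specializing to $r=\delta p$ forces the comparison of $(\delta p+1)^{(\delta p+1)\nu}$ with $(p!)^{\delta\nu}$ via $p^p\leq e^p p!$, which is exactly where the factor $8^\nu e^\nu$ enters. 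Your approach interpolates instead between the weights $0$ and $p$, both of which are already integers in the hypothesis, so no floor is taken and no Stirling-type loss occurs; the output is the sharper bound $C A^{\delta p+|\beta|}(p!)^{\delta\nu}(|\beta|!)^\mu\leq C A^{p+|\beta|}(p!)^{\delta\nu}(|\beta|!)^\mu$. The only thing the paper's detour buys is an estimate valid for every real exponent $r$, but since the lemma only asks for $r=\delta p$ with $p\in\nn$, this generality is not used and your direct H\"older step is both simpler and quantitatively better.
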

\begin{proof}
Let $f \in S_{\nu}^{\mu}(\rr^d)$ satisfying the estimates \eqref{int}. 
It follows from H\"older inequality that for all $r \in (0,+\infty) \setminus \nn$ and $\beta \in \nn^d$,
\begin{multline}\label{holder0}
\|\langle x \rangle^r \partial_x^{\beta} f \|^2_{L^2(\rr^d)} = \int_{\rr^d} \big(\langle x \rangle^{2 \lfloor r \rfloor} |\partial^{\beta}_x f(x)|^2\big)^{\lfloor r \rfloor +1-r} \big(\langle x \rangle^{2(\lfloor r \rfloor+1)} |\partial^{\beta}_x f(x)|^2\big)^{r-\lfloor r \rfloor} dx \\
\leq \|\langle x \rangle^{\lfloor r \rfloor} \partial_x^{\beta} f \|^{2(\lfloor r \rfloor +1- r)}_{L^2(\rr^d)} \|\langle x \rangle^{\lfloor r \rfloor+1} \partial_x^{\beta} f \|^{2(r-\lfloor r \rfloor)}_{L^2(\rr^d)},
\end{multline}
where $\lfloor \cdot \rfloor$ denotes the floor function.
Since the above inequality clearly holds for $r \in \nn$, we deduce from \eqref{int} and \eqref{holder0} that for all $r \geq 0$ and $\beta \in \nn^d$,
\begin{align}\label{GS_1}
\|\langle x \rangle^r \partial_x^{\beta} f \|_{L^2(\rr^d)} & \leq C A^{r+|\beta|} (\lfloor r \rfloor!)^{(\lfloor r \rfloor +1-r) \nu} \big((\lfloor r \rfloor+1)!\big)^{(r-\lfloor r \rfloor) \nu} (|\beta|!)^{\mu} \\ \nonumber
& \leq C A^{r+|\beta|} \big((\lfloor r \rfloor+1)!\big)^{\nu} (|\beta|!)^{\mu} \\ \nonumber
& \leq  C A^{r+|\beta|} (\lfloor r \rfloor+1)^{(\lfloor r \rfloor+1)\nu} (|\beta|!)^{\mu} \\ \nonumber
& \leq C A^{r+|\beta|} (r+1)^{(r+1)\nu} (|\beta|!)^{\mu}.
\end{align}
It follows from \eqref{GS_1} that for all $p \in \nn^*$, $\beta \in \nn^d$,
\begin{align}\label{puiss_frac}
\|\langle x \rangle^{\delta p} \partial_x^{\beta} f \|_{L^2(\rr^d)} & \leq C A^{p+|\beta|} (p+1)^{(\delta p+1)\nu} (|\beta|!)^{\mu} \leq C A^{p+|\beta|} (2p)^{(\delta p+1)\nu} (|\beta|!)^{\mu} \\ \notag
& \leq C (2^{\nu}A)^{p+|\beta|} p^{\nu} (2p)^{\delta \nu p} (|\beta|!)^{\mu} \leq C (8^{\nu}e^{\nu}A)^{p+|\beta|} (p!)^{\delta \nu} (|\beta|!)^{\mu},
\end{align}
since for all positive integer $p \geq 1$,
\begin{equation*}
p+1 \leq 2p \leq 2^p \quad \text{and} \quad p^p \leq e^p p!.
\end{equation*}
Notice that from \eqref{int}, since $8^{\nu} e^{\nu} \geq 1$, estimates \eqref{puiss_frac} also hold for $p=0$.  This ends the proof of Lemma~\ref{interpolation}.
\end{proof}

\subsection{Quasi-analytic sequences}\label{qa_section}
This section is devoted to recall some properties of quasi-analytic sequences and to state a multidimensional version of the Nazarov-Sodin-Volberg theorem (Corollary~\ref{NSV}). This theorem plays a key role in the proof of Theorem~\ref{general_uncertaintyprinciple}. We begin by a lemma which provides some quasi-analytic sequences and quantitative estimates on the Bang degree $n_{t, \mathcal{M}, r}$ defined in \eqref{Bang}:

\begin{lemma}\label{ex_qa_sequence}
Let $0<s \leq 1$, $A \geq 1$ and $\mathcal{M}_s= (A^p(p!)^s)_{p \in \nn}$. If $0<s<1$, then for all $0<t \leq 1$, $r>0$,
\begin{equation}
n_{t, \mathcal{M}_s, r} \leq 2^{\frac{1}{1-s}}\big(1-\log t+(Ar)^{\frac{1}{1-s}}\big).
\end{equation}
If $s=1$, then for all $0< t \leq 1$, $r>0$,
\begin{equation}\label{cass1}
n_{t, \mathcal{M}_1, r} \leq (1-\log t) e^{Ar} .
\end{equation}
Moreover, $$\forall 0 < s \leq 1, \forall p \in \nn^*, \quad 0 \leq \gamma_{\mathcal{M}_s}(p) \leq s.$$
\end{lemma}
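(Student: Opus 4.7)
The strategy is to reduce everything to direct computations on the explicit sequence $\mathcal{M}_s=(A^p (p!)^s)_{p\in\nn}$, for which the key ratios simplify cleanly. First I would compute
\[
\frac{M_{n-1}}{M_n}=\frac{1}{A n^s},\qquad \frac{M_{j+1}M_{j-1}}{M_j^2}=\Big(1+\tfrac{1}{j}\Big)^{s}.
\]
The second identity immediately handles the claim on $\gamma_{\mathcal{M}_s}$: the quantity $j\big((1+1/j)^s-1\big)$ is non-negative, and by Bernoulli's inequality $(1+x)^s\leq 1+sx$ for $0<s\leq 1$ and $x>0$, one obtains $j\big((1+1/j)^s-1\big)\leq s$, so that $0\leq\gamma_{\mathcal{M}_s}(p)\leq s$ for every $p\geq 1$.

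For the Bang degree, unfolding the definition \eqref{Bang} gives
\[
n_{t,\mathcal{M}_s,r}=\sup\Big\{N\in\nn:\sum_{-\log t<n\leq N}\tfrac{1}{n^s}<A r\Big\}.
\]
Setting $M$ as the smallest integer strictly greater than $-\log t$, one has $M\leq 1-\log t$ because $-\log t\geq 0$ on $(0,1]$. The upper bound on the Bang degree will then come from the observation that if $\sum_{M\leq n\leq N}n^{-s}\geq Ar$, then necessarily $N>n_{t,\mathcal{M}_s,r}$. I would lower bound this partial sum by an integral: $\sum_{n=M}^{N}n^{-s}\geq\int_M^{N+1}x^{-s}\,dx$.

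In the case $s=1$ this integral equals $\log((N+1)/M)$, so the implication $\log((N+1)/M)\geq Ar\Rightarrow n_{t,\mathcal{M}_1,r}<N$ yields $n_{t,\mathcal{M}_1,r}\leq Me^{Ar}-1\leq(1-\log t)e^{Ar}$, which is \eqref{cass1}. For $0<s<1$ the integral equals $\big((N+1)^{1-s}-M^{1-s}\big)/(1-s)$, and solving for $N$ produces
\[
n_{t,\mathcal{M}_s,r}\leq\big((1-s)Ar+M^{1-s}\big)^{\frac{1}{1-s}}-1.
\]
Applying the elementary convexity inequality $(a+b)^{p}\leq 2^{p-1}(a^p+b^p)$ with $p=1/(1-s)\geq 1$, together with $(1-s)^{1/(1-s)}\leq 1$ and $M\leq 1-\log t$, gives the announced bound $n_{t,\mathcal{M}_s,r}\leq 2^{1/(1-s)}\big(1-\log t+(Ar)^{1/(1-s)}\big)$.

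No step is really an obstacle: the proof is a sequence of direct estimates. The only care needed is in book-keeping the integer part of $-\log t$ when defining $M$ and in verifying the arithmetic of the final application of $(a+b)^p\leq 2^{p-1}(a^p+b^p)$, in particular checking that $(1-s)^{1/(1-s)}\leq 1$ throughout $(0,1)$ so that the constant $(Ar)^{1/(1-s)}$ appears with coefficient $\leq 2^{1/(1-s)}$.
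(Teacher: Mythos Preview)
Your proposal is correct and follows essentially the same route as the paper: explicit computation of the ratios $M_{n-1}/M_n=1/(An^s)$ and $M_{j+1}M_{j-1}/M_j^2=(1+1/j)^s$, an integral comparison to lower-bound the partial sums $\sum n^{-s}$, and a final power-type inequality $(a+b)^{1/(1-s)}\leq 2^{1/(1-s)}(a^{1/(1-s)}+b^{1/(1-s)})$. The only cosmetic difference is that you bound $(1+1/j)^s-1$ via Bernoulli's inequality while the paper writes the equivalent integral estimate $(j+1)^s-j^s=\int_j^{j+1}s\,x^{s-1}\,dx\leq s j^{s-1}$, and your bookkeeping with $M=\lfloor -\log t\rfloor+1$ is slightly more explicit than the paper's direct substitution of $1-\log t$.
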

\medskip
\begin{proof}
Let $0<s\leq1$ and $0< t \leq 1$. The sequence $\mathcal{M}_s$ is logarithmically convex. By using that the Riemann series $$A^{-1}\sum \frac{1}{p^s} = \sum \frac{A^{p-1}((p-1)!)^s}{A^p(p!)^s}$$ is divergent, we notice that for all $r>0$, $n_{t, \mathcal{M}_s,r} < +\infty$. When $0<s<1$, we have that for all integers $p \geq 1$,
\begin{equation*}
\frac{1}{1-s}\big((p+1)^{1-s}-p^{1-s}\big)=\int_{p}^{p+1} \frac{1}{x^s} dx \leq \frac{1}{p^s}.
\end{equation*}
It follows that for all $N \in \nn^*$, 
\begin{equation*}
\frac{1}{1-s} \big((N+1)^{1-s}-(-\log t+1)^{1-s}\big) \leq \sum_{-\log t <p \leq N} \frac{1}{p^s}.
\end{equation*}
By taking $N= n_{t, \mathcal{M}_s, r}$ and since $0< 1-s < 1$, it follows that
\begin{equation*}
n_{t, \mathcal{M}_s, r} \leq \Big((1-\log t)^{1-s}+Ar\Big)^{\frac{1}{1-s}}.
\end{equation*}
The result then follows by using the basic estimate
\begin{equation*}
\forall x, y \geq0, \quad (x+y)^{\frac{1}{1-s}} \leq 2^{\frac{1}{1-s}} \max\Big(x^{\frac{1}{1-s}}, y^{\frac{1}{1-s}}\Big) \leq 2^{\frac{1}{1-s}} \Big(x^{\frac{1}{1-s}}+y^{\frac{1}{1-s}}\Big).
\end{equation*}
 By proceeding in the same manner in the case when $s=1$, we deduce the upper bound \eqref{cass1} thanks to the formula 
\begin{equation*}
\forall p \in \nn^*, \quad \log(p+1)-\log p = \int_p^{p+1} \frac{dx}{x} \leq \frac{1}{p}.
\end{equation*}
By noticing that
\begin{equation*}
\forall 0<s \leq 1, \forall j \in \nn^*, \quad (j+1)^s-j^s= \int_j^{j+1} \frac{s}{x^{1-s}} dx \leq s \frac{1}{j^{1-s}},
\end{equation*}
we finally obtain that for all $0<s \leq 1$, $p \in \nn^*$,
\begin{equation*}
\gamma_{\mathcal{M}_s}(p)= \sup_{1 \leq j \leq p} j \Big(\frac{M_{j+1} M_{j-1}}{M_j^2} -1\Big) = \sup_{1\leq j \leq p} j^{1-s} \big((j+1)^s-j^s\big) \leq s< +\infty.
\end{equation*}
\end{proof}

Let us now prove Proposition~\ref{ex_qa_bertrand}.
This proof uses the following lemmas established in~\cite{AlphonseMartin}:

\begin{lemma}[{\cite[Lemma~4.4]{AlphonseMartin}}]\label{relation} Let $\mathcal M=(M_p)_{p \in \nn}$ and $\mathcal M'=(M'_p)_{p\in\nn}$ be two sequences of positive real numbers satisfying $$\forall p \in \nn, \quad M_p \le M'_p.$$ If $\mathcal M'$ is a quasi-analytic sequence, so is the sequence $\mathcal M$.
\end{lemma}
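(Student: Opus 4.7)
The plan is to derive the quasi-analyticity of $\mathcal M$ directly from the very definition of quasi-analyticity via function classes, rather than by trying to transfer the Denjoy-Carleman series criterion between $\mathcal M$ and $\mathcal M'$. The key observation is that the pointwise inequality $M_p \le M'_p$ produces an inclusion of the associated function classes on $(0,1)$, and quasi-analyticity is a property that is inherited downward under such inclusions.

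The first step is to verify that the inclusion
$$\mathcal C_{\mathcal M}((0,1)) \subseteq \mathcal C_{\mathcal M'}((0,1))$$
holds. Indeed, given any $f \in \mathcal C^{\infty}((0,1),\cc)$ with $\|\partial^{\beta} f\|_{L^{\infty}((0,1))} \le M_{|\beta|}$ for all $\beta \in \nn$, the hypothesis $M_p \le M'_p$ gives $\|\partial^{\beta} f\|_{L^{\infty}((0,1))} \le M'_{|\beta|}$ as well, so $f$ belongs to $\mathcal C_{\mathcal M'}((0,1))$.

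The second step is to use this inclusion to conclude. Let $f \in \mathcal C_{\mathcal M}((0,1))$ vanish to infinite order at some point $x_{0} \in (0,1)$. By the inclusion just established, $f$ also lies in $\mathcal C_{\mathcal M'}((0,1))$ and still vanishes to infinite order at $x_{0}$. Since the class $\mathcal C_{\mathcal M'}((0,1))$ is quasi-analytic by hypothesis, this forces $f \equiv 0$. By definition, this shows that $\mathcal M$ is a quasi-analytic sequence.

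I do not anticipate any genuine technical difficulty: the lemma is essentially a tautology once the definition of quasi-analyticity is unfolded at the level of function classes. The only point worth emphasizing is that the proof deliberately avoids the Denjoy-Carleman characterization through the divergence of $\sum M_{p-1}/M_p$, since the pointwise comparison $M_p \le M'_p$ provides no direct control on the ratios $M_{p-1}/M_p$ versus $M'_{p-1}/M'_{p}$; it is precisely by going back to the function-class formulation of quasi-analyticity that the argument becomes immediate.
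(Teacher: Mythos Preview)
Your proof is correct. The paper does not actually supply its own proof of this lemma; it merely cites it from \cite{AlphonseMartin}, so there is no in-paper argument to compare against. Your approach---passing through the inclusion $\mathcal C_{\mathcal M}((0,1)) \subseteq \mathcal C_{\mathcal M'}((0,1))$ and invoking the function-class definition of quasi-analyticity---is the natural one, and your remark that the Denjoy--Carleman series criterion does not transfer directly under the hypothesis $M_p \le M'_p$ is a worthwhile observation.
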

\medskip
\begin{lemma}[{\cite[Lemma~4.5]{AlphonseMartin}}]\label{linearcomb} Let $\Theta : [0,+\infty)\rightarrow [0,+\infty)$ be a continuous function. If the associated sequence $\mathcal{M}^{\Theta}$ in \eqref{lc_sequence} is quasi-analytic, so is $\mathcal M^{T\Theta+c}$ for all $c \geq 0$ and $T>0$.
\end{lemma}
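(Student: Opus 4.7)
My plan is to first reduce to $c=0$ and then split on the size of $T$. Since $M_p^{T\Theta+c}=e^{-c}M_p^{T\Theta}$, the multiplicative constant $e^{-c}$ cancels in every ratio $M_{p-1}^{T\Theta+c}/M_p^{T\Theta+c}$, so the Denjoy-Carleman sum controlling quasi-analyticity is unchanged. It therefore suffices to treat $c=0$.

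The case $T\geq 1$ is immediate: from $T\Theta(t)\geq\Theta(t)$ for every $t\geq 0$, one gets $t^p e^{-T\Theta(t)}\leq t^p e^{-\Theta(t)}$, and taking the supremum over $t\geq 0$ yields $M_p^{T\Theta}\leq M_p^{\Theta}$ for every $p\in\nn$. Lemma~\ref{relation} then transfers quasi-analyticity from $\mathcal{M}^{\Theta}$ to $\mathcal{M}^{T\Theta}$.

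The regime $0<T<1$ is the main obstacle, since $T\Theta\leq\Theta$ reverses the previous comparison and Lemma~\ref{relation} becomes useless in that direction. I would enlarge the discrete family to the real-variable function $N_q:=\sup_{t\geq 0}t^q e^{-\Theta(t)}$, defined and positive for every $q\geq 0$; a direct H\"older-type argument (alternatively, recognising $\log N_q$ as the Legendre transform of $\Theta(e^{\cdot})$) shows that $\psi(q):=\log N_q$ is a finite convex function on $[0,+\infty)$. The identity
$$M_p^{T\Theta}=\sup_{t\geq 0}\bigl(t^{p/T}e^{-\Theta(t)}\bigr)^T=N_{p/T}^{\,T}$$
then rewrites the Denjoy-Carleman sum for $\mathcal{M}^{T\Theta}$, with $s=1/T>1$ and $\Delta_p:=\psi(sp)-\psi(s(p-1))$, as
$$\sum_{p\geq 1}\frac{M_{p-1}^{T\Theta}}{M_p^{T\Theta}}=\sum_{p\geq 1}e^{-T\Delta_p}.$$

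To force this series to diverge I would exploit the convexity of $\psi$: the inequality $\Delta_p\leq s\,\psi'_+(sp)$ combined with $Ts=1$ gives $e^{-T\Delta_p}\geq e^{-\psi'_+(sp)}$, so it suffices to prove $\sum_{p\geq 1}e^{-\psi'_+(sp)}=+\infty$. From the hypothesis and the dual convex bound $\psi(q)-\psi(q-1)\geq\psi'_+(q-1)$, the quasi-analyticity of $\mathcal{M}^{\Theta}$ already yields $\sum_{q\geq 0}e^{-\psi'_+(q)}=+\infty$. I would then partition the positive integers $q$ according to the unique interval $(s(p-1),sp]$ containing them: each such interval contains at most $\lceil s\rceil$ integers, and monotonicity of $\psi'_+$ gives $e^{-\psi'_+(q)}\leq e^{-\psi'_+(s(p-1))}$ throughout it. Summing over $p$ produces
$$\sum_{q\geq 1}e^{-\psi'_+(q)}\leq\lceil s\rceil\sum_{p\geq 1}e^{-\psi'_+(s(p-1))},$$
which forces $\sum_{p\geq 1}e^{-\psi'_+(sp)}=+\infty$ and closes the argument. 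The most delicate point will be this convexity bookkeeping on $\psi$ at non-integer arguments, necessary precisely because the reversed pointwise inequality when $0<T<1$ rules out a direct application of Lemma~\ref{relation}.
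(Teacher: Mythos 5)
The paper itself does not prove this lemma: it imports it verbatim from \cite[Lemma~4.5]{AlphonseMartin}, and the standard route there is through the logarithmic-integral form of the Denjoy--Carleman criterion (comparing $\sup_p r^p/M_p^{\Theta}$ with $e^{\Theta(r)}$, so that quasi-analyticity amounts to divergence of $\int_1^{\infty}\Theta(t)t^{-2}\,dt$, which is manifestly invariant under $\Theta\mapsto T\Theta+c$). Your argument is genuinely different and stays entirely at the level of the ratio sum $\sum_p M_{p-1}/M_p$: the reduction to $c=0$, the case $T\ge1$ via Lemma~\ref{relation}, and, for $0<T<1$, the identity $M_p^{T\Theta}=N_{p/T}^{\,T}$ together with the convexity of $\psi(q)=\log N_q$ (a supremum of affine functions of $q$, hence a Legendre transform) are all correct, as is the idea of comparing $\sum_p e^{-\psi'_+(sp)}$ with $\sum_q e^{-\psi'_+(q)}$ by grouping the integers into the intervals $\bigl(s(p-1),sp\bigr]$. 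What you gain is a self-contained quantitative proof that never invokes the equivalence between the ratio test and the logarithmic integral; what it costs is the endpoint bookkeeping below.

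One endpoint does need repair. The right derivative $\psi'_+(0)$ can equal $-\infty$ (already for $\Theta(t)=t$, where $\psi(q)=q\log q-q$), so $e^{-\psi'_+(0)}=+\infty$ and two of your passages degenerate: the step from $\sum_{q\ge0}e^{-\psi'_+(q)}=+\infty$ to $\sum_{q\ge1}e^{-\psi'_+(q)}=+\infty$, and, more seriously, the final step from $\sum_{p\ge1}e^{-\psi'_+(s(p-1))}=+\infty$ to $\sum_{p\ge1}e^{-\psi'_+(sp)}=+\infty$, which silently discards the possibly infinite term $e^{-\psi'_+(0)}$; as written the last displayed inequality then carries no information. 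The fix is routine: every term $e^{-\psi'_+(q)}$ with integer $q\ge1$ is finite, so the tail $\sum_{q>s}e^{-\psi'_+(q)}$ still diverges (using $e^{-(\psi(q)-\psi(q-1))}\le e^{-\psi'_+(q-1)}$ only for $q\ge2$), and partitioning only those $q$ into the intervals $\bigl(sp,s(p+1)\bigr]$ with $p\ge1$ yields $\sum_{q>s}e^{-\psi'_+(q)}\le\lceil s\rceil\sum_{p\ge1}e^{-\psi'_+(sp)}$ with no infinite term to drop. With that adjustment your proof is complete.
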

\medskip
Let $k \geq 1$ be a positive integer, $\frac{1}{2} \leq s \leq 1$ and $\Theta_{k,s} : [0,+\infty) \longrightarrow [0,+\infty)$ be the non-negative function defined in Proposition~\ref{ex_qa_bertrand}.
We first notice that the assumption $\text{(H1)}$ clearly holds for $\mathcal M^{\Theta_{k,s}}$. Let us check that the assumption $\text{(H2)}$ holds as well. To that end, we notice that
\begin{equation*}
\forall t \geq 0, \quad  \Theta_{k,s}(t) \leq t+1
\end{equation*}
and we deduce that
\begin{equation*}
\forall p \in \nn, \quad M^{\Theta_{k,s}}_p \geq \sup_{t \geq 0} t^p e^{-(t+1)}= e^{-1} \Big( \frac{p}{e}\Big)^p. 
\end{equation*}
It remains to check that $\text{(H3)}_s$ holds. Thanks to the morphism property of the logarithm, it is clear that 
\begin{equation*}
\Theta_{k,s}(t) \underset{t \to +\infty}{\sim} s\Theta_{k,1}(t^s)
\end{equation*}
and this readily implies that there exists a positive constant $C_{k,s}>0$ such that
\begin{equation*}
\forall t \geq 0, \quad \Theta_{k,s}(t) + C_{k,s} \geq s\Theta_{k,1}\big(t^s\big).
\end{equation*}
It follows that 
\begin{align*}
\forall p \in \nn, \quad \Big(M^{\Theta_{k,s}}_p\Big)^s=e^{sC_{k,s}}\Big(M^{\Theta_{k,s}+C_{k,s}}_p\Big)^s & \leq e^{sC_{k,s}} \sup_{t \geq 0} t^{sp}e^{-s^2 \Theta_{k,1}(t^s)} \\
&= e^{sC_{k,s}} \sup_{t \geq 0} t^{p}e^{-s^2 \Theta_{k,1}(t)} \\
& = e^{sC_{k,s}} M_p^{s^2\Theta_{k,1}}.
\end{align*}
By using Proposition~\ref{ex_theta1} together with Lemmas \ref{relation} and \ref{linearcomb}, the quasi-analyticity of the sequence $\big((M^{\Theta_{k,s}}_p)^s\big)_{p \in \nn}$ follows from the quasi-analyticity of $\mathcal{M}^{\Theta_{k,1}}$.

The following result by Nazarov, Sodin and Volberg \cite{NSV} provides an uniform control on the uniform norm of quasi-analytic functions ruled by their values on a positive measurable subset. Originally stated in \cite[Theorem~B]{NSV}, it has been used by Jaye and Mitkovski (\cite{JayeMitkovski}) in the following form: 
\begin{theorem}[{\cite[Theorem~2.5]{JayeMitkovski}}]\label{JayeNSV} Let $\mathcal{M}=(M_p)_{p \in \nn}$ be a logarithmically convex quasi-analytic sequence with $M_0=1$ and $f \in \mathcal{C}_{\mathcal{M}}([0,1]) \setminus \{0\}$. For any interval $I \subset [0,1]$ and measurable subset $\mathcal{J} \subset I$ with $|\mathcal{J}| >0$,
\begin{equation*}
\sup_{I} |f| \leq \Big(\frac{\Gamma_{\mathcal{M}}(2n_{\|f\|_{L^{\infty}([0,1])}, \mathcal{M},e}) |I|}{|\mathcal{J}|} \Big)^{2n_{\|f\|_{L^{\infty}([0,1])}, \mathcal{M},e}} \sup_{\mathcal{J}} |f|.
\end{equation*}
\end{theorem}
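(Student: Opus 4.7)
The plan is to follow the Bang/Nazarov--Sodin--Volberg strategy: reduce to a Remez-type polynomial inequality and iterate it with the derivative, using the quasi-analytic bound $\|f^{(k)}\|_{L^{\infty}([0,1])}\leq M_k$ as the stopping criterion once $k$ reaches (twice) the Bang degree. Normalize the situation so that $t:=\|f\|_{L^{\infty}([0,1])}\leq M_0=1$ (this inequality is built into the definition of $\mathcal{C}_{\mathcal{M}}([0,1])$), so that the quantity $N_0:=n_{t,\mathcal{M},e}$ is finite by the Denjoy--Carleman theorem. The quantity to control is the ratio $R:=\sup_I|f|/\sup_{\mathcal{J}}|f|$, and the aim is to show $R\leq (\Gamma_{\mathcal{M}}(2N_0)|I|/|\mathcal{J}|)^{2N_0}$.

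The base estimate is the classical Remez--Turán inequality: for any polynomial $P$ of degree $d$ and any measurable $\mathcal{J}\subset I$ of positive measure, $\sup_I|P|\leq (4|I|/|\mathcal{J}|)^d\sup_{\mathcal{J}}|P|$. The strategy is to apply this, not directly to $f$, but to a truncated Taylor-like comparison iterated over derivatives. The core of Bang's lemma, which I would prove first in the form used in \cite{NSV,JayeMitkovski}, asserts that if $g\in C^{\infty}([0,1])$ satisfies $\sup_{\mathcal{J}}|g|\leq \lambda\sup_I|g|$ for some small $\lambda>0$, then there exists a rescaling factor $\alpha_k$ built from the ratios $M_{j-1}/M_j$ such that the derivative $g'$ still satisfies a comparable estimate on $I$ versus $\mathcal{J}$, up to a multiplicative loss of $\Gamma_{\mathcal{M}}(2N_0)\cdot|I|/|\mathcal{J}|$ per step. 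Iterating this $k$ times gives, schematically,
\[
\frac{\sup_I|f^{(k)}|}{\sup_{\mathcal{J}}|f^{(k)}|}\;\gtrsim\;R\cdot\Big(\frac{|\mathcal{J}|}{\Gamma_{\mathcal{M}}(2N_0)|I|}\Big)^{k}\prod_{j=1}^{k}\frac{M_{j-1}}{M_j}.
\]
By the very definition of the Bang degree $N_0$, summing the logarithms of the ratios $M_{j-1}/M_j$ over $-\log t<j\leq N_0$ yields a bound of order $e$, so that after $k=2N_0$ differentiation steps the product of ratios cancels exactly the factor $\|f\|_{L^{\infty}([0,1])}^{-1}=t^{-1}$ appearing when one relates $\sup_I|f^{(k)}|$ to $M_k$ through the quasi-analytic bound. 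Comparing the two estimates $\sup_I|f^{(2N_0)}|\leq M_{2N_0}$ and the lower bound above yields the claimed inequality.

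The two delicate parts will be, first, an honest implementation of the single-differentiation step, which is where the factor $\Gamma_{\mathcal{M}}(2N_0)=4e^{4+4\gamma_{\mathcal{M}}(2N_0)}$ is produced: the oscillation control $\gamma_{\mathcal{M}}(p)=\sup_{1\leq j\leq p}j(M_{j+1}M_{j-1}/M_j^2-1)$ governs the distortion incurred when replacing $g$ by its rescaled antiderivative and absorbing the loss into the reference sequence. Second, the bookkeeping of the index in the Bang degree: the threshold $r=e$ in \eqref{Bang} is tuned so that exactly one factor of $e^{-1}$ is gained per iteration block, which after $2N_0$ iterations produces a surplus that absorbs the normalization by $t=\|f\|_{L^{\infty}([0,1])}$. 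These two ingredients combine to close the estimate with the advertised exponent $2n_{\|f\|_{L^{\infty}([0,1])},\mathcal{M},e}$ rather than a larger power; without the logarithmic convexity and quasi-analyticity hypotheses the iteration would not terminate, so these assumptions are used in an essential and quantitative way.
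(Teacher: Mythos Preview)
The paper does not prove this theorem at all: it is quoted verbatim as \cite[Theorem~2.5]{JayeMitkovski}, itself a reformulation of \cite[Theorem~B]{NSV}, and is used as a black box to derive Corollary~\ref{NSV} and the multidimensional Propositions~\ref{NSV_multid} and \ref{NSV_multid_L2}. So there is no ``paper's own proof'' to compare against; your proposal is an attempt to supply what the paper deliberately imports from the literature.

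As a sketch of the Nazarov--Sodin--Volberg/Bang argument, your outline points in the right direction---iterated differentiation with Bang's lemma, the Bang degree as the stopping time, the quantity $\gamma_{\mathcal{M}}$ controlling the per-step loss---but it is not yet a proof. The ``schematic'' displayed inequality is not derived, and the two steps you flag as delicate (the single-differentiation lemma producing the factor $\Gamma_{\mathcal{M}}(2N_0)$, and the bookkeeping that matches the threshold $r=e$ to the normalization by $t=\|f\|_{L^\infty}$) are precisely the substance of the result; describing them is not the same as establishing them. In particular, the Remez--Tur\'an inequality is not applied to $f$ or to a truncated Taylor polynomial in the NSV argument; rather, Bang's lemma produces a Chebyshev-type alternation structure for $f$ itself, and the iteration propagates lower bounds on successive derivatives until the class bound $\|f^{(k)}\|_\infty\le M_k$ forces termination at $k\le 2N_0$. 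If you want a self-contained proof, you would need to state and prove Bang's lemma with the explicit constant $\Gamma_{\mathcal{M}}$, then carry out the induction with the correct inequalities rather than a heuristic product formula.
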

The following corollary is instrumental in this work:
\medskip
\begin{corollary}\label{NSV} Let $\mathcal{M}=(M_p)_{p \in \nn}$ be a logarithmically convex quasi-analytic sequence with $M_0=1$ and $0< s, t \leq 1$. There exists a positive constant $C=C(\mathcal{M}) \geq 1$ such that  for any interval $I \subset [0,1]$ and measurable subset $\mathcal{J} \subset I$ with $|\mathcal{J}| \geq s >0$,
\begin{equation*}
\forall f \in \mathcal{C}_{\mathcal{M}}([0,1]) \text{ with } \|f\|_{L^{\infty}([0,1])} \geq t, \quad \sup_{I} |f| \leq \Big(\frac{\Gamma_{\mathcal{M}}(2n_{t, \mathcal{M},e}) |I|}{s} \Big)^{2n_{t, \mathcal{M},e}} \sup_{\mathcal{J}} |f|.
\end{equation*}
\end{corollary}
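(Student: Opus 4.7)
The plan is to reduce Corollary~\ref{NSV} to Theorem~\ref{JayeNSV} by means of two elementary monotonicity properties together with the trivial observation that $|I|/s \geq 1$. Specifically, I would apply Theorem~\ref{JayeNSV} directly to $f$ (which we may assume non-zero since $\|f\|_{L^{\infty}([0,1])} \geq t > 0$), $I$ and $\mathcal{J}$, obtaining
\begin{equation*}
\sup_{I} |f| \leq \Big(\frac{\Gamma_{\mathcal{M}}(2n_{\|f\|_{L^{\infty}([0,1])}, \mathcal{M},e}) \, |I|}{|\mathcal{J}|} \Big)^{2n_{\|f\|_{L^{\infty}([0,1])}, \mathcal{M},e}} \sup_{\mathcal{J}} |f|,
\end{equation*}
and then replace the three $f$- and $\mathcal{J}$-dependent quantities on the right-hand side by uniform bounds in terms of $t$ and $s$.

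The two monotonicity facts I would verify first are: (i) the map $u \in (0,1] \mapsto n_{u, \mathcal{M}, e}$ is non-increasing, which follows immediately from the definition \eqref{Bang} since increasing $u$ decreases $-\log u$, hence enlarges the summation index set $\{n : -\log u < n \leq N\}$, hence makes the sum larger at fixed $N$, hence forces the supremum over admissible $N$ to decrease; and (ii) the map $p \mapsto \Gamma_{\mathcal{M}}(p)$ is non-decreasing, because it is built from $\gamma_{\mathcal{M}}(p) = \sup_{1 \leq j \leq p} j(M_{j+1}M_{j-1}/M_j^2 - 1)$, a supremum over an expanding index set of non-negative numbers (non-negativity being a consequence of the log-convexity of $\mathcal{M}$).

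Setting $N := n_{\|f\|_{L^{\infty}([0,1])}, \mathcal{M}, e}$ and $N_t := n_{t, \mathcal{M}, e}$, the hypothesis $\|f\|_{L^{\infty}([0,1])} \geq t$ combined with (i) gives $N \leq N_t$, and then (ii) yields $\Gamma_{\mathcal{M}}(2N) \leq \Gamma_{\mathcal{M}}(2N_t)$. Together with $|\mathcal{J}| \geq s$, the base of the power is bounded above by $\Gamma_{\mathcal{M}}(2N_t)|I|/s$. Crucially, this base is $\geq 1$: on one hand $\Gamma_{\mathcal{M}}(2N_t) \geq 4 e^{4} \geq 1$ by the definition of $\Gamma_{\mathcal{M}}$, and on the other $|I| \geq |\mathcal{J}| \geq s$ since $\mathcal{J} \subset I$. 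As the base is $\geq 1$, raising it to the larger exponent $2N_t$ rather than $2N$ can only increase the quantity, which produces the claimed estimate.

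There is no real obstacle here; the argument is a chain of three inequalities applied to Theorem~\ref{JayeNSV}. The only point deserving a line of justification is the sign of the exponent change, i.e.~the fact that $\Gamma_{\mathcal{M}}(2N_t) |I|/s \geq 1$, which is why I spell it out above. The constant $C=C(\mathcal{M}) \geq 1$ announced in the statement plays no role in the final inequality and can be read as absorbed into $\Gamma_{\mathcal{M}}(2N_t)$.
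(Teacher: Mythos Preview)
Your proposal is correct and follows essentially the same approach as the paper, which deduces the corollary in one line from Theorem~\ref{JayeNSV} by observing that $n_{\|f\|_{L^{\infty}([0,1])}, \mathcal{M},e} \leq n_{t, \mathcal{M},e}$ whenever $\|f\|_{L^{\infty}([0,1])} \geq t$. You have simply spelled out the additional monotonicity facts (for $\Gamma_{\mathcal{M}}$ and for the base being $\geq 1$) that the paper leaves implicit.
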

\medskip
Corollary~\ref{NSV} is directly deduced from Theorem~\ref{JayeNSV} by noticing that for all $f \in \mathcal{C}_{\mathcal{M}}([0,1])$ satisfying $\| f \|_{L^{\infty}([0,1])} \geq t$, $$n_{\|f\|_{L^{\infty}([0,1])}, \mathcal{M},e} \leq n_{t, \mathcal{M},e}.$$
In order to use this result in control theory, we need a multidimensional version of Corollary~\ref{NSV}:
\medskip

 \begin{proposition}\label{NSV_multid}
Let $d \geq 1$ and $U$ be a non-empty bounded open convex subset of $\rr^d$ satisfying $|\partial U|=0$. Let $\mathcal{M}=(M_p)_{p \in \nn}$ be a logarithmically convex quasi-analytic sequence with $M_0=1$, $0< \gamma \leq 1$ and $0<t\leq1$. For any measurable subset $E \subset U$ satisfying $|E| \geq \gamma |U|>0$, we have
\begin{multline}\label{NSV_estimate}
\forall f \in \mathcal{C}_{\mathcal{M}}(U) \text{ with } \|f\|_{L^{\infty}(U)} \geq t, \\
 \sup_{U} |f| \leq \Big(\frac{d}{\gamma} \Gamma_{\mathcal{M}}\big(2n_{t, \mathcal{M},d \diam(U) e}\big)\Big)^{2n_{t, \mathcal{M},d \diam(U) e}} \sup_{E} |f|.
\end{multline}
\end{proposition}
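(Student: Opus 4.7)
The plan is to reduce Proposition~\ref{NSV_multid} to the one-dimensional Corollary~\ref{NSV} by restricting $f$ to a carefully chosen line segment through an almost-maximizer of $|f|$, exploiting the convexity of $U$.

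Given $\eta > 0$ small, pick $x_0 \in U$ with $|f(x_0)| \geq \|f\|_{L^\infty(U)} - \eta \geq t - \eta$, which is possible since $\|f\|_{L^\infty(U)} \geq t$. Convexity implies $U$ is star-shaped with respect to $x_0$, so every $x \in U$ writes $x_0 + r\theta$ with $\theta \in \SSS^{d-1}$ and $0 \leq r < r(\theta) := \sup\{s > 0 : x_0 + s\theta \in U\} \leq \diam(U)$. Using $|\partial U| = 0$, polar coordinates centered at $x_0$ give $|U| = d^{-1} \int_{\SSS^{d-1}} r(\theta)^d \, d\sigma(\theta)$ and an analogous representation for $|E|$. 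Setting $L_\theta = \{r \in [0, r(\theta)] : x_0 + r\theta \in E\}$ and using the crude bound $\int_0^{r(\theta)} \mathbbm{1}_E(x_0+r\theta)\, r^{d-1} dr \leq r(\theta)^{d-1}|L_\theta|$, the hypothesis $|E| \geq \gamma|U|$ produces, by a contradiction argument, a direction $\theta_* \in \SSS^{d-1}$ with $|L_{\theta_*}| \geq (\gamma/d)\, r(\theta_*)$; this is the origin of the outer factor $d/\gamma$ in the final constant.

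One then considers the rescaled one-dimensional restriction $h(s) := f(x_0 + r(\theta_*) s\, \theta_*)$ for $s \in [0,1]$. The chain rule together with $\sum_{|\beta|=k} \binom{k}{\beta}|\theta_*^\beta| \leq d^k$ (since $|\theta_*^\beta|\leq 1$) gives $|h^{(k)}(s)| \leq (d \diam(U))^k M_k$, so $h \in \mathcal{C}_{\mathcal{M}'}([0,1])$ with $\mathcal{M}' = ((d \diam(U))^k M_k)_{k \in \nn}$. A direct inspection of~\eqref{Bang} and~\eqref{def_gamma} yields $\Gamma_{\mathcal{M}'} = \Gamma_{\mathcal{M}}$ (the ratios $M'_{j+1} M'_{j-1}/(M'_j)^2$ are invariant under geometric scaling of $\mathcal{M}$) and $n_{\tau, \mathcal{M}', e} = n_{\tau, \mathcal{M}, d\diam(U)\, e}$ for all $\tau \in (0,1]$. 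The rescaled set $\mathcal{J} := \{s \in [0,1] : r(\theta_*) s \in L_{\theta_*}\}$ satisfies $|\mathcal{J}| \geq \gamma/d$ and $\|h\|_{L^\infty([0,1])} \geq |f(x_0)| \geq t - \eta$. Applying Corollary~\ref{NSV} with $I = [0,1]$, subset $\mathcal{J}$ and threshold $t - \eta$ therefore gives $|f(x_0)| \leq \bigl((d/\gamma)\, \Gamma_{\mathcal{M}}(2 n_{t-\eta,\mathcal{M},d\diam(U)e})\bigr)^{2 n_{t-\eta,\mathcal{M},d\diam(U)e}} \sup_E |f|$, and letting $\eta \to 0^+$ concludes.

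The main technical point is the $\eta$-approximation of the supremum when $\|f\|_{L^\infty(U)}$ is not attained inside $U$: it is handled by observing that $\tau \mapsto n_{\tau,\mathcal{M},r}$ is integer-valued and piecewise constant, so that for $\eta > 0$ sufficiently small one has $n_{t-\eta,\mathcal{M},d\diam(U)e} = n_{t,\mathcal{M},d\diam(U)e}$, giving exactly the constant in~\eqref{NSV_estimate}. The remaining care lies in tracking constants through the rescaling step, where the shift by $d\diam(U)$ inside the Bang degree (coming from the reparametrization defining $h$) couples with the outer $d/\gamma$ factor from the slicing step to reproduce precisely the target bound.
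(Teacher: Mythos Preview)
Your proof is correct and follows essentially the same slicing-to-a-line strategy as the paper: polar coordinates about a (near-)maximizer to find a good direction with $|I_{\sigma_0}|\ge \gamma/d$, then the chain rule to place the one-variable restriction in $\mathcal{C}_{\mathcal{M}'}([0,1])$ with $\mathcal{M}'=((d\diam U)^p M_p)_p$, and finally Corollary~\ref{NSV}. The only noteworthy difference is how the maximizer is obtained: the paper uses that $f$, having bounded gradient (by $M_1$), extends continuously to the compact set $\overline{U}$, so an exact maximizer $x_0\in\overline{U}$ exists and no limiting step is needed; you instead take an $\eta$-approximate maximizer in $U$ and pass to the limit via the observation that $\tau\mapsto n_{\tau,\mathcal{M},r}$ is locally constant (which is correct, since the summation range $\{n>-\log\tau\}$ depends only on $\lfloor-\log\tau\rfloor$). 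Both routes are valid; the paper's is slightly shorter, while yours avoids invoking the boundary extension of $f$.
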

\medskip

\begin{proof}
Let $0< \gamma \leq 1$ and $0<t \leq 1$. Let $E$ be a measurable subset of $U$ satisfying $|E| \geq \gamma |U|>0$ and $f \in \mathcal{C}_{\mathcal{M}}(U)$ with $\|f\|_{L^{\infty}(U)} \geq t$. Since $\overline{U}$ is compact and that $f$ can be extended as a continuous map on $\overline{U}$, there exists $x_0 \in \overline{U}$ such that 
\begin{equation}\label{max}
\sup_U |f|= |f(x_0)|.
\end{equation}
By using spherical coordinates, we have
\begin{equation*}
|E|= \int_{\rr^d} \un_{E}(x) dx =  \int_{\rr^d} \un_{E}(x_0+x) dx
= \int_0^{+\infty} \int_{\mathbb{S}^{d-1}} \un_{E} (x_0 + t \sigma) d\sigma t^{d-1}dt.
\end{equation*}
Since $\overline{U}$ is convex, we deduce that
 \begin{align*}\label{m1}
0<|E| & =\int_{\mathbb{S}^{d-1}} \int_{0}^{J_{\overline{U}}(\sigma)} \un_{E} (x_0 + t \sigma) t^{d-1}dt d\sigma \\
 & = \int_{\mathbb{S}^{d-1}} J_{\overline{U}}(\sigma)^d\int_{0}^{1} \un_{E} \big(x_0 + J_{\overline{U}}(\sigma) t \sigma\big)  t^{d-1}dt d\sigma \\ 
& \leq \int_{\mathbb{S}^{d-1}} J_{\overline{U}}(\sigma)^d\int_{0}^{1} \un_{E} \big(x_0 + J_{\overline{U}}(\sigma) t \sigma\big)dt d\sigma \leq \int_{\mathbb{S}^{d-1}} J_{\overline{U}}(\sigma)^d |I_{\sigma}| d\sigma,
\end{align*}
with
\begin{equation}\label{jauge}
 J_{\overline{U}}(\sigma) =\sup\{t\geq0: \, x_0+ t\sigma \in \overline{U} \} \quad \text{and} \quad I_{\sigma} = \Big\{t \in [0,1]: \, x_0 + J_{\overline{U}}(\sigma) t \sigma \in E \Big\},
\end{equation}
when $\sigma \in \mathbb{S}^{d-1}$. Notice that $$\forall \sigma \in \mathbb{S}^{d-1}, \quad J_{\overline{U}}(\sigma) <+\infty,$$
since $\overline{U}$ is bounded.
It follows that there exists $\sigma_0 \in \mathbb{S}^{d-1}$ such that 
\begin{equation}\label{m3}
|E| \leq |I_{\sigma_0}| \int_{\mathbb{S}^{d-1}} J_{\overline{U}}(\sigma)^d d\sigma .
\end{equation}
By using the assumption that $|\partial U|=0$ and $U$ is an open set, we observe that
\begin{equation*}
|U|=|\overline{U}|= \int_{\mathbb{S}^{d-1}} J_{\overline{U}}(\sigma)^d \int_0^1 t^{d-1}dt d\sigma = \frac{1}{d} \int_{\mathbb{S}^{d-1}} J_{\overline{U}}(\sigma)^d d\sigma.
\end{equation*}
By using that $|E| \geq \gamma |U|$, the estimate \eqref{m3} and the above formula provide the lower bound
\begin{equation} \label{m4}
|I_{\sigma_0}| \geq \frac{\gamma}{d}>0.
\end{equation}
Setting
\begin{equation}\label{fonct_aux}
\forall t \in [0,1], \quad g(t)=f\big(x_0 + J_{\overline{U}}(\sigma_0) t \sigma_0 \big),
\end{equation}
we notice that this function is well-defined as $x_0 + J_{\overline{U}}(\sigma_0) t \sigma_0 \in \overline{U}$ for all $t \in [0,1]$. We deduce from the fact that $f \in \mathcal{C}_{\mathcal{M}}(U)$, the estimate $$ J_{\overline{U}}(\sigma_0) \leq \diam(\overline{U})=\diam(U),$$
where $\diam(U)$ denotes the Euclidean diameter of $U$, and the multinomial formula that for all $p \in \nn$,
\begin{multline*}
\|g^{(p)}\|_{L^{\infty}([0,1])} \leq \sum_{\substack{\beta \in \nn^d, \\ |\beta|=p}} \frac{p!}{\beta !} \|\partial^{\beta}_x f \|_{L^{\infty}(\overline{U})} \big(J_{\overline{U}}(\sigma_0)\big)^p \\ \leq \bigg(  \sum_{\substack{\beta \in \nn^d, \\ |\beta|=p}} \frac{p!}{\beta !}\bigg) \diam(U)^p M_p = \big(d\diam(U)\big)^p M_p.
\end{multline*}
We observe that the new sequence $$\mathcal{M}':= \Big(\big(d\diam(U)\big)^p M_p \Big)_{p \in \nn},$$ inherits from $\mathcal{M}$ its logarithmical convexity, its quasi-analytic property with the following identity for the associated Bang degrees
\begin{equation*}
n_{t,\mathcal{M}',e}= n_{t,\mathcal{M},d\diam(U)e}.
\end{equation*}
The function $g$ belongs to $\mathcal{C}_{\mathcal{M}'}([0,1])$. By using from \eqref{m4} that $|I_{\sigma_0}| >0$ and $\|g\|_{L^{\infty}([0,1])} \geq |g(0)|=\|f\|_{L^{\infty}(U)} \geq t$, we can apply Corollary~\ref{NSV} to obtain that
\begin{equation}\label{NSV_1}
\sup_{[0,1]} |g| \leq \Big(\frac{\Gamma_{\mathcal{M}'}(2n_{t, \mathcal{M}',e})}{|I_{\sigma_0}|} \Big)^{2n_{t, \mathcal{M}',e}} \sup_{I_{\sigma_0}} |g|.
\end{equation}
By noticing that $$\Gamma_{\mathcal{M}}=\Gamma_{\mathcal{M}'},$$
we deduce from \eqref{max}, \eqref{jauge}, \eqref{m4}, \eqref{fonct_aux} and \eqref{NSV_1} that
\begin{multline*}
\sup_U |f| = |f(x_0)| =|g(0)| \leq \sup \limits_{[0,1]} |g| \leq \Big(\frac{d}{\gamma} \Gamma_{\mathcal{M}}(2n_{t, \mathcal{M},d\diam(U) e}) \Big)^{2n_{t, \mathcal{M},d\diam(U) e}} \sup_{I_{\sigma_0}} |g| \\ 
\leq \Big(\frac{d}{\gamma} \Gamma_{\mathcal{M}}(2n_{t, \mathcal{M},d\diam(U) e}) \Big)^{2n_{t, \mathcal{M},d\diam(U) e}} \sup_E |f|.
\end{multline*}
This ends the proof of Proposition~\ref{NSV_multid}.
\end{proof}

In order to use estimates as (\ref{NSV_estimate}) to derive the null-controllability of evolution equations posed in $L^2(\rr^d)$, we need the following $L^2$-version of the Nazarov-Sodin-Volberg Theorem:

\medskip

\begin{proposition}\label{NSV_multid_L2}
Let $d \geq 1$ and $U$ be a non-empty bounded open convex subset of $\rr^d$.
Let $\mathcal{M}=(M_p)_{p \in \nn}$ be a logarithmically convex quasi-analytic sequence with $M_0=1$, $0< \gamma \leq 1$ and $0<t \leq 1$. If $E \subset U$ is a measurable subset satisfying $|E| \geq \gamma |U|$, then for all $f \in \mathcal{C}_{\mathcal{M}}(U)$ with $\|f\|_{L^{\infty}(U)} \geq t$,
\begin{equation*}
 \int_U |f(x)|^2 dx \leq \frac{2}{\gamma}\Big(\frac{2d}{\gamma} \Gamma_{\mathcal{M}}\big(2n_{t, \mathcal{M},d\diam(U) e}\big)\Big)^{4n_{t, \mathcal{M},d\diam(U) e}}  \int_E |f(x)|^2 dx.
\end{equation*}
\end{proposition}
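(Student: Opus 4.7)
The plan is to upgrade the $L^\infty$-estimate of Proposition~\ref{NSV_multid} to an $L^2$-estimate by a layer-cake/slicing argument based on the measure of super-level sets of $|f|$. Throughout, let us abbreviate $n=n_{t, \mathcal{M}, d\diam(U)e}$ and
$$K= \Big(\tfrac{2d}{\gamma} \Gamma_{\mathcal{M}}(2n)\Big)^{2n},$$
which is exactly the constant delivered by Proposition~\ref{NSV_multid} when applied with the thickness parameter $\gamma/2$ in place of $\gamma$.

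First, for a parameter $0<\lambda<1/K$ to be optimized later, I would introduce the "bad" sublevel set
$$E_\lambda = \bigl\{ x \in E : |f(x)| \leq \lambda \sup_{U} |f| \bigr\}.$$
Assume for contradiction that $|E_\lambda| \geq \tfrac{\gamma}{2}|U|$. Since $f \in \mathcal{C}_{\mathcal{M}}(U)$ with $\|f\|_{L^\infty(U)} \geq t$, Proposition~\ref{NSV_multid} applied with $E_\lambda$ in place of $E$ and the lower bound $|E_\lambda|\geq \tfrac{\gamma}{2}|U|$ yields
$$\sup_U |f| \leq K \sup_{E_\lambda} |f| \leq K \lambda \sup_U |f|,$$
which is impossible since $K\lambda<1$ and $\sup_U |f| \geq t>0$. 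Hence $|E_\lambda| < \tfrac{\gamma}{2}|U|$, and consequently
$$|E \setminus E_\lambda| \geq |E| - |E_\lambda| > \gamma|U| - \tfrac{\gamma}{2}|U| = \tfrac{\gamma}{2}|U|.$$

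Second, on $E\setminus E_\lambda$ the function $f$ satisfies $|f(x)|>\lambda \sup_U |f|$ by construction, so
$$\int_E |f(x)|^2\,dx \geq \int_{E\setminus E_\lambda} |f(x)|^2\,dx \geq \lambda^2 (\sup_U|f|)^2 \cdot \tfrac{\gamma}{2}|U|.$$
Combining with the trivial bound $\int_U|f|^2\,dx\leq |U|(\sup_U|f|)^2$, one obtains
$$\int_U |f(x)|^2\,dx \leq \frac{2}{\gamma \lambda^2} \int_E |f(x)|^2\,dx.$$

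Finally, I would let $\lambda \nearrow 1/K$ (the strict inequality $\lambda<1/K$ was only needed to rule out $|E_\lambda|\geq\tfrac{\gamma}{2}|U|$, but the resulting estimate is continuous in $\lambda$, so passing to the limit is immediate) to get the sharp factor
$$\int_U |f(x)|^2\,dx \leq \frac{2K^2}{\gamma} \int_E |f(x)|^2\,dx = \frac{2}{\gamma}\Big(\frac{2d}{\gamma}\Gamma_{\mathcal{M}}(2n)\Big)^{4n} \int_E |f(x)|^2\,dx,$$
which is precisely the claimed estimate. There is no real obstacle in this argument — the one subtlety is choosing the threshold $\lambda$ just below $1/K$ so as to recover the exact constant $2/\gamma$ in the statement (taking $\lambda$ of the form $1/(2K)$, for example, would only yield the weaker constant $8/\gamma$). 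The application of Proposition~\ref{NSV_multid} with thickness parameter $\gamma/2$ is what produces the Bang degree argument $n_{t, \mathcal{M}, d\diam(U)e}$ and the base $2d/\gamma$ appearing in the final bound.
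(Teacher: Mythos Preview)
Your proof is correct and reaches exactly the same constant as the paper, but the route is organized differently. The paper applies a Chebyshev-type averaging argument: it sets
\[
\tilde E=\Big\{x\in E:\ |f(x)|^2\le \tfrac{2}{|E|}\int_E|f|^2\Big\},
\]
observes directly that $|\tilde E|\ge |E|/2\ge\tfrac{\gamma}{2}|U|$, and then applies Proposition~\ref{NSV_multid} to $\tilde E$ to bound $\sup_U|f|$ by $K\sqrt{\tfrac{2}{|E|}\int_E|f|^2}$. The trivial estimate $\int_U|f|^2\le|U|(\sup_U|f|)^2$ then concludes, with no limiting argument.

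Your version instead defines the sublevel set $E_\lambda$ relative to $\sup_U|f|$, uses Proposition~\ref{NSV_multid} \emph{by contradiction} to force $|E_\lambda|<\tfrac{\gamma}{2}|U|$, bounds $\int_E|f|^2$ from below on the complement, and then lets $\lambda\nearrow 1/K$. Both arguments invoke Proposition~\ref{NSV_multid} with thickness $\gamma/2$ and both pass through $\int_U|f|^2\le|U|(\sup_U|f|)^2$, so the quantitative output is identical. The paper's Chebyshev step is marginally cleaner in that it avoids the limit in $\lambda$; your contradiction approach is equally rigorous and perhaps conceptually more transparent about why the constant $K$ enters squared.
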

\medskip

\begin{proof}
Let $0<t\leq 1$, $f \in \mathcal{C}_{\mathcal{M}}(U)$ so that $\|f\|_{L^{\infty}(U)} \geq t$ and $E$ be a subset of $U$ satisfying $|E| \geq \gamma |U| >0$. Setting
\begin{equation*}
\tilde{E}= \Big\{x \in E: \  |f(x)|^2 \leq \frac{2}{|E|} \int_E |f(y)|^2 dy\Big\},
\end{equation*}
we observe that
\begin{equation}\label{m_20}
\int_E |f(x)|^2dx \geq \int_{ E \setminus \tilde{E}} |f(x)|^2 dx \geq \frac{2|E \setminus \tilde{E}|}{|E|} \int_E |f(x)|^2dx.
\end{equation}
Let us prove by contradiction that the integral
\begin{equation*}
\int_E |f(x)|^2 dx >0,
\end{equation*}
is positive.
If $$\int_E |f(x)|^2 dx =0,$$ then, $$E_{\mathcal{Z}}=\Big\{ x \in E: \quad f(x)=0 \Big\},$$ satisfies $|E_{\mathcal{Z}}|=|E|>0$. We therefore deduce from Proposition~\ref{NSV_multid}, since $\|f\|_{L^{\infty}(U)} \geq t$ and $|E_{\mathcal{Z}}|>0$, that $f=0$ on $U$. This contradicts the assumption $\| f \|_{L^{\infty}(U)} \geq t>0$ and therefore 
\begin{equation*}
\int_E |f(x)|^2 dx >0.
\end{equation*}
We deduce from \eqref{m_20} that
\begin{equation*}\label{m_21}
|\tilde{E}| = |E|-|E\setminus \tilde{E}| \geq \frac{|E|}{2} \geq \frac{\gamma}{2} |U| >0.
\end{equation*}
Applying Proposition~\ref{NSV_multid} provides that
\begin{multline*}
\sup_U |f| \leq \Big(\frac{2d}{\gamma} \Gamma_{\mathcal{M}}(2n_{t, \mathcal{M},d\diam(U) e}) \Big)^{2n_{t, \mathcal{M},d\diam(U) e}} \sup_{\tilde{E}} |f| \\ 
\leq \Big(\frac{2d}{\gamma} \Gamma_{\mathcal{M}}(2n_{t, \mathcal{M},d\diam(U) e})\Big)^{2n_{t, \mathcal{M},d\diam(U) e}} \frac{\sqrt{2}}{\sqrt{|E|}} \Big(\int_E|f(x)|^2dx\Big)^{\frac{1}{2}}.
\end{multline*}
It follows that
\begin{align*}
\int_U |f(x)|^2dx \leq |U|\big(\sup_U |f|\big)^2 & \leq  \Big(\frac{2d}{\gamma} \Gamma_{\mathcal{M}}(2n_{t, \mathcal{M},d\diam(U) e}) \Big)^{4n_{t, \mathcal{M},d\diam(U) e}}\frac{2|U|}{|E|}  \int_E |f(x)|^2dx \\
& \leq \Big(\frac{2d}{\gamma} \Gamma_{\mathcal{M}}(2n_{t, \mathcal{M},d\diam(U) e}) \Big)^{4n_{t, \mathcal{M},d\diam(U) e}} \frac{2}{\gamma}\int_E |f(x)|^2dx.
\end{align*}
This concludes the proof of Proposition~\ref{NSV_multid_L2}.
\end{proof}
In \cite{JayeMitkovski}, the authors also establish a multi-dimensional version and a $L^2$-version of the Nazarov-Sodin-Volberg Theorem (Theorem~\ref{JayeNSV}) but the constants obtained there are less explicit than the ones given in Propositions~\ref{NSV_multid} and \ref{NSV_multid_L2}. Quantitative constants will be essential in Section~\ref{null_controllability_results} to set up an adapted Lebeau-Robbiano method in order to derive null-controllability results.
We end this section by illustrating the above result with an example:
\begin{example}\label{NSV_example}
Let $0< s \leq 1$, $A\geq 1$, $R>0$, $d \geq 1$, $0<t \leq 1$, $0< \gamma \leq 1$ and $\mathcal{M}=(A^p (p!)^s)_{p \in \nn}$. Let $E \subset B(0,R)$ be a measurable subset of the Euclidean ball centered at $0$ with radius $R$ such that $|E| \geq \gamma |B(0,R)|$. There exists a constant $K=K(s, d) \geq 1$ such that for all $f \in \mathcal{C}_{\mathcal{M}}(B(0,R))$ with $\|f\|_{L^{\infty}(B(0,R))} \geq t$,
\begin{equation*}
\| f \|_{L^{\infty}(B(0,R))} \leq C_{t, A, s, R, \gamma, d} \|f\|_{L^{\infty}(E)} \quad \text{and} \quad \| f \|_{L^2(B(0,R))} \leq C_{t, A, s, R, \gamma, d} \|f\|_{L^2(E)} ,
\end{equation*}
where when $0<s<1$,
$$0<C_{t, A, s, R, \gamma, d} \leq \Big(\frac{K}{\gamma}\Big)^{K(1-\log t+ (AR)^{\frac{1}{1-s}})}$$
and when $s=1$,
$$0<C_{t, A, 1, R, \gamma, d} \leq \Big(\frac{K}{\gamma}\Big)^{K(1-\log t)e^{KAR}}.$$
\end{example}

Let us check that Example~\ref{NSV_example} is a consequence of Propositions~\ref{NSV_multid} and \ref{NSV_multid_L2}, together with Lemma~\ref{ex_qa_sequence}. We deduce from Propositions~\ref{NSV_multid} and \ref{NSV_multid_L2} that for all $f \in \mathcal{C}_{\mathcal{M}}(B(0,R))$ with $\|f\|_{L^{\infty}(B(0,R))} \geq t$,
\begin{equation*}
\| f \|_{L^{\infty}(B(0,R))} \leq \Big(\frac{d}{\gamma} \Gamma_{\mathcal{M}}(2n_{t, \mathcal{M},2Rd e}) \Big)^{2n_{t, \mathcal{M},2Rd e}} \|f\|_{L^{\infty}(E)}
\end{equation*}
and
\begin{equation*}
\| f \|_{L^2(B(0,R))} \leq \sqrt{\frac{2}{\gamma}} \Big(\frac{2d}{\gamma} \Gamma_{\mathcal{M}}(2n_{t, \mathcal{M},2Rd e}) \Big)^{2n_{t, \mathcal{M},2Rd e}} \|f\|_{L^2(E)}.
\end{equation*}
Furthermore, Lemma~\ref{ex_qa_sequence} provides that $$\forall n \in \nn^*, \quad \Gamma_{\mathcal{M}}(n) \leq e^{4+4s}$$ and if $0<s<1$ then,
\begin{equation*}
n_{t, \mathcal{M}, 2Rde} \leq 2^{\frac{1}{1-s}}\big(1-\log t+(2ARde)^{\frac{1}{1-s}}\big),
\end{equation*}
whereas if $s=1$, then
\begin{equation*}
n_{t, \mathcal{M}, 2Rde} \leq (1-\log t) e^{2ARde}. 
\end{equation*}
The result of Example~\ref{NSV_example} therefore follows from the above estimates.
\subsection{Slowly varying metrics}{\label{vsm}}
This section is devoted to recall basic facts about slowly varying metrics. We refer the reader to~\cite{Hormander} (Section 1.4) for the proofs of the following results.
Let $X$ be an open subset in a finite dimensional $\rr$-vector space $V$ and $\|\cdot\|_x$ a norm in $V$ depending on $x \in X$. The family of norms $(\|\cdot\|_x)_{x \in X}$ is said to define a slowly varying metric in $X$ if there exists a positive constant $C \geq 1$ such that for all $x \in X$ and for all $y \in V$ satisfying $\|y-x\|_x <1$, then $y \in X$ and 
\begin{equation}{\label{equiv}}
\forall v \in V, \quad \frac{1}{C} \|v \|_x \leq \|v\|_y \leq C \|v \|_x.
\end{equation}

\medskip
\begin{lemma}\label{slowmet}\cite[Example~1.4.8]{Hormander}.
Let $X$ be an open subset in a finite dimensional $\rr$-vector space $V$ and $d(x)$ a $\frac{1}{2}$-Lipschitz continuous function, positive in $X$ and zero in $V \setminus X$, satisfying 
\begin{equation*}
\forall x,y \in X, \quad |d(x) - d(y) | \leq \frac{1}{2}\|x-y \|,
\end{equation*}
where $\|\cdot\|$ is a fixed norm in $V$. Then, the family of norms $(\|\cdot\|_x)_{x \in X}$ given by
\begin{equation*}\label{family_norms}
\|v\|_x= \frac{ \|v\|}{d(x)}, \quad x \in X, v \in V,
\end{equation*}
defines a slowly varying metric in X.
\end{lemma}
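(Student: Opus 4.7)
The plan is to verify the two requirements in the definition of slowly varying metric, namely that $\|y-x\|_x<1$ forces $y\in X$, and that on such a ball the norms $\|\cdot\|_x$ and $\|\cdot\|_y$ are comparable with a uniform constant. Both will follow directly from the $\frac{1}{2}$-Lipschitz property of $d$ and the defining formula $\|v\|_x=\|v\|/d(x)$.

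First I would fix $x\in X$ and $y\in V$ with $\|y-x\|_x<1$, i.e.\ $\|y-x\|<d(x)$. Since $d$ vanishes outside $X$, it suffices to show $d(y)>0$. The $\frac{1}{2}$-Lipschitz assumption yields
\begin{equation*}
|d(y)-d(x)|\leq \tfrac{1}{2}\|y-x\|<\tfrac{1}{2}d(x),
\end{equation*}
hence
\begin{equation*}
\tfrac{1}{2}d(x)<d(y)<\tfrac{3}{2}d(x),
\end{equation*}
so in particular $d(y)>0$ and $y\in X$.

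Next, the double inequality on $d(y)/d(x)$ transfers immediately to the norms: for every $v\in V$,
\begin{equation*}
\frac{2}{3}\|v\|_x=\frac{\|v\|}{\frac{3}{2}d(x)}<\frac{\|v\|}{d(y)}=\|v\|_y<\frac{\|v\|}{\frac{1}{2}d(x)}=2\|v\|_x.
\end{equation*}
Hence the slowness condition \eqref{equiv} holds with the uniform constant $C=2$, which matches the value $C=1/(1-L)$ announced after the statement for $L=\frac{1}{2}$. I do not anticipate any real obstacle: the only subtlety is to first establish $d(y)>0$ before invoking the formula for $\|\cdot\|_y$, and the bound $\|y-x\|<d(x)$ handed by the hypothesis $\|y-x\|_x<1$ is precisely tailored to make the Lipschitz estimate on $d$ strict.
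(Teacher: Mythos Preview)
Your argument is correct and is precisely the standard verification from H\"ormander's Example~1.4.8. The paper itself does not supply a proof of this lemma; it merely cites H\"ormander and then remarks that the same computation goes through for any contraction constant $0\le L<1$ in place of $\tfrac{1}{2}$, giving $C=\frac{1}{1-L}$, which is exactly what your bounds $\tfrac{1}{2}d(x)<d(y)<\tfrac{3}{2}d(x)$ specialize to when $L=\tfrac{1}{2}$.
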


\medskip

The proof given in \cite[Example~1.4.8]{Hormander} shows more generally that result of Lemma~\ref{slowmet} holds true as well when $d$ is a contraction mapping function, that is, when there exists $0 \leq k <1$ such that
\begin{equation*}
\forall x,y \in X, \quad |d(x) - d(y) | \leq k \|x-y \|.
\end{equation*}
Let us consider the case when $X=V=\rr^d$ and $\|\cdot\|$ is the Euclidian norm. If $0 < \delta \leq 1$ and $0< R <\frac{1}{\delta}$, then the gradient of the function $\rho_\delta(x)=R\left\langle x\right\rangle^{\delta}$ given by 
$$\forall x \in \rr^d, \quad \nabla \rho_\delta(x)=R \delta \frac{x}{\left\langle x\right\rangle^{2-\delta}},$$ 
satisfies $\| \nabla \rho_\delta\|_{L^{\infty}(\rr^d)} \leq R \delta <1$. The mapping $\rho_{\delta}$ is then a positive contraction mapping and Lemma~\ref{slowmet} shows that the family of norms  $\|\cdot\|_x= \frac{\|\cdot\|}{R \left\langle x\right\rangle^{\delta}}$ defines a slowly varying metric on $\rr^d$.

\medskip

\begin{theorem}{\label{slowmetric}}
\cite[Theorem~1.4.10]{Hormander}.
Let $X$ be an open subset in $V$ a $\rr$-vector space of finite dimension $d \geq 1$ and $(\|\cdot\|_x)_{x \in X}$ be a family of norms in $V$ defining a slowly varying metric. Then, there exists a sequence $(x_k)_{k \geq 0} \in X^{\nn}$ such that the balls
\begin{equation*}
B_k=\left\{x \in V:\ \|x-x_k \|_{x_k} <1 \right\} \subset X,
\end{equation*}
form a covering of $X$, 
$$X = \bigcup \limits_{k=0}^{+\infty} B_k,$$ 
such that the intersection of more than $N=\big(4 C^3+1 \big)^d$ two by two distinct balls $B_k$ is always empty, where $C \geq 1$ denotes the positive constant appearing in the slowness condition \emph{(\ref{equiv})}.
\end{theorem}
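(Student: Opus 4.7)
The plan is to build the sequence $(x_k)$ by a greedy maximal-separation procedure and to read off both the covering and the bounded-overlap properties from the slowness condition (\ref{equiv}) via a packing argument in finite dimension. First, I would invoke Zorn's lemma to extract a maximal family $(x_k)_k$ of points of $X$ satisfying the separation constraint
$$\forall j \neq k, \quad \|x_j - x_k\|_{x_k} \geq \tfrac{1}{2}.$$
That such a family can be indexed by $\nn$ will be a byproduct of the local finiteness proved below combined with the $\sigma$-compactness of the open subset $X$ of the finite-dimensional real vector space $V$.

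The covering property is an immediate consequence of maximality: if some $x \in X$ satisfied $\|x - x_k\|_{x_k} \geq \tfrac12$ for every $k$, then adjoining $x$ to the family would preserve the separation constraint, contradicting maximality. Hence for every $x \in X$ there exists $k$ with $\|x-x_k\|_{x_k} < \tfrac12 < 1$, so $x \in B_k$ and $X = \bigcup_k B_k$.

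For the overlap bound, fix $x \in X$ and set $I(x) = \{k : x \in B_k\}$. For each $k \in I(x)$ the inequality $\|x - x_k\|_{x_k} < 1$ triggers the slowness condition (\ref{equiv}) at the center $x_k$, yielding the bilateral bound $\tfrac{1}{C}\|v\|_{x_k} \leq \|v\|_x \leq C\|v\|_{x_k}$ for every $v \in V$. Applied with $v = x_k - x$ this gives $\|x_k-x\|_x < C$; applied with $v = x_j - x_k$ for distinct $j,k \in I(x)$ and combined with the separation constraint, it gives $\|x_j - x_k\|_x \geq \tfrac{1}{2C}$. Thus the points $\{x_k : k \in I(x)\}$ form a $\tfrac{1}{2C}$-separated subset of the single normed space $(V, \|\cdot\|_x)$ contained in its ball of radius $C$ centered at $x$. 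A standard Lebesgue volume comparison in dimension $d$ (disjoint small $\|\cdot\|_x$-balls around these points, all contained in a common enlarged ball) then caps $|I(x)|$ by a constant of the form $(4C^\alpha+1)^d$; the stated constant $(4C^3+1)^d$ corresponds to measuring the small disjoint packing balls in the individual norms $\|\cdot\|_{x_k}$ rather than in the common norm $\|\cdot\|_x$, which costs one additional application of slowness.

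The only technical subtlety is coordinating the successive uses of (\ref{equiv})—once to transfer the separation lower bound from each $\|\cdot\|_{x_k}$ to the common norm $\|\cdot\|_x$, and once implicit in the packing step to ensure the auxiliary small balls lie inside $X$—so that the final overlap constant depends only on $C$ and $d$ in the claimed manner. Everything else (the covering property, the local finiteness, and hence the countability of the family) is a direct consequence of the maximality of $(x_k)$ together with the finite-dimensional packing.
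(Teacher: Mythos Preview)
The paper does not give its own proof of this statement; it is quoted from H\"ormander \cite[Theorem~1.4.10]{Hormander} and used as a black box in Section~\ref{proof_mainprop}. Your outline is the standard H\"ormander argument (maximal separated net plus a finite-dimensional volume packing bound), so on the level of strategy there is nothing to compare.

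There is, however, a genuine slip in the covering step. Because your separation constraint $\|x_j-x_k\|_{x_k}\ge\tfrac12$ runs over all \emph{ordered} pairs $(j,k)$, maximality of the family only guarantees that for every $x\in X$ \emph{either} some $k$ satisfies $\|x-x_k\|_{x_k}<\tfrac12$ \emph{or} some $k$ satisfies $\|x_k-x\|_{x}<\tfrac12$; you treat only the first alternative. In the second one, slowness applied at $x$ yields merely $\|x-x_k\|_{x_k}\le C\|x-x_k\|_x< C/2$, which fails to place $x$ in $B_k$ as soon as $C>2$. The cure is routine and is exactly what H\"ormander does: take the separation radius to be $\tfrac{1}{2C}$ instead of $\tfrac12$. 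Then the second alternative gives $\|x-x_k\|_{x_k}<C\cdot\tfrac{1}{2C}=\tfrac12<1$ and the covering follows. Your packing paragraph then goes through unchanged, and the extra factor of $C$ in the separation lower bound (now $\tfrac{1}{2C^2}$ in the common norm $\|\cdot\|_x$) is precisely what produces the stated overlap constant $(4C^3+1)^d$ rather than the $(4C^2+1)^d$ your argument as written would yield. Your closing attribution of the exponent $3$ to ``measuring the small disjoint packing balls in the individual norms $\|\cdot\|_{x_k}$'' is therefore not the actual mechanism.
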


\end{document}